\def\ignore#1{{}}
\newcommand{\bea}{\begin{align*}}
\newcommand{\eaa}{\end{align*}}
\newcommand{\pr}{\mathbb{P}}
\renewcommand{\Pr}{\mathbb{P}}
\newcommand{\E}{\mathbb{E}}
\newcommand{\expect}{\mathbb{E}}
\newcommand{\rR}{\mathbb{R}}
\newcommand{\rN}{\mathbb{N}}
\newcommand{\rZ}{\mathbb{Z}}
\renewcommand{\epsilon}{\varepsilon}
\newcommand{\mb}[1]{\mbox{\boldmath $#1$}}
\newcommand{\ind}[1]{1{\left\{#1\right\}}}
\newcommand{\vct}[1]{{\mb #1}}
\newcommand{\D}{J}
\newcommand{\fM}{V}
\definecolor{Red}{rgb}{1,0,0}
\definecolor{Blue}{rgb}{0,0,1}
\definecolor{Green}{rgb}{0,1,0}
\newtheorem{theorem}{Theorem}
\newtheorem{lemma}[theorem]{Lemma}
\newtheorem{prop}{Proposition}
\newtheorem{coro}{Corollary}
\newtheorem{Defi}{Definition}
\theoremstyle{remark}
\newtheorem{remark}{Remark}
\title{Steady-state analysis of a multi-server queue\\ in the Halfin-Whitt regime}
\author{David Gamarnik$^a$\thanks{Support from NSF grant CMMI-0726733 is gratefully acknowledged.} \ \ \ \ \ \ Petar Mom\v{c}ilovi\'c$^b$\\
{\footnotesize \hspace{-0.15in}
$^a$Operations Research Center and Sloan School of Management, MIT, Cambridge, MA 02139; gamarnik@mit.edu}\\
{\footnotesize $^b$
EECS Department, University of Michigan, Ann Arbor, MI 48109; petar@eecs.umich.edu}
}
\date{}
\begin{document}

\maketitle

\begin{abstract}
We consider a multi-server queue in the Halfin-Whitt 
regime: as the number of servers~$n$ grows without a bound, the utilization approaches~$1$ from below at the rate $\Theta(1/\sqrt{n})$.
Assuming that the service time distribution is lattice-valued with a finite support, we characterize the limiting stationary queue length distribution in terms of the
stationary distribution of an explicitly constructed Markov chain.
Furthermore, we obtain an explicit
expression for the critical exponent for the moment generating
function of a limiting (scaled) steady-state queue length. This
exponent has a compact representation in terms of three parameters: the amount of spare capacity and the coefficients of variation of interarrival and service times. Interestingly, it matches an analogous exponent corresponding to a single-server queue
in the conventional heavy-traffic regime. \\ 

\noindent{Keywords: Multi-server queue, heavy-traffic approximation, Halfin-Whitt (QED) regime}

\noindent{{2000 Mathematics Subject Classification:} Primary 60K25, Secondary 90B22.}
\end{abstract}


\section{Introduction}
In their seminal paper~\cite{HaW81} Halfin and Whitt formally introduced an
unconventional heavy traffic regime for queueing models. Unlike the traditional heavy traffic approach, in their regime (dubbed thereafter the Halfin-Whitt regime) high utilization is achieved by simultaneously increasing the arrival rate \emph{and} the number of servers $n$.
This regime is also reffered to as Quality- and Efficiency-Driven (QED) since it balances between the system utilization and quality of service perceived by customers; the steady-state queue length and waiting time scale respectively as $O(\sqrt{n})$ and $O(1/\sqrt{n})$, in some appropriate sense~\cite{HaW81}. Moreover, the QED regime can be understood as critical with respect to
the probability of wait, i.e., the limiting stationary probability of wait is {\em strictly} in $(0,1)$ in QED systems (the probabilities of wait~$0$ and~$1$ correspond to the quality-driven and efficiency-driven regimes, respectively). It should be noted that the QED regime was considered by Erlang~\cite{Erl48} in the context of numerical steady-state analysis of M/M/n and M/M/n/n systems. An asymptotic analysis of the closely related Erlang loss function was carried out in~\cite{Jag74}. A formal analysis of a queue with exponential service times in the QED regime was completed in~\cite{HaW81} by Halfin and Whitt. They established the criticality of the probability of wait in terms of the square-root spare capacity rule, both in steady-state and transient regimes.

Queueing models in the QED regime have found applications primarily in the area of large-scale call and customer contact centers~\cite{GKM03,AAM07}. Hence, a number of related models have been considered in the literature. Models with customer impatience relevant to call center management were studied in~\cite{FSS94,GMR01,ZeM05}. Approximations that take into account
finiteness of buffers were introduced in~\cite{Whi02a,Whi02b}. Revenue maximization and constraint satisfaction were considered in~\cite{ArM01,ArM02,BMR00,MaZ01,MZe06}. Optimal stochastic control of QED queues in various settings was examined in~\cite{Ata05b,Ata05,HZe05,Tez05}. The problem of joint control and staffing was studied in~\cite{AMR04,GAM06}. Most of the aforementioned results assume exponential service times.
This assumption significantly simplifies the analysis as one does not need to keep a track of residual service times. 
The literature on non-exponential service time distribution is limited. Phase-type service time distribution in the transient regime was considered  in Puhalskii and Reiman~\cite{PuR00}.
The case of deterministic service times in the steady-state regime was  considered
in Jelenkovi\'c et al.~\cite{JMM04}. A more recent work by Mandelbaum and Mom\v{c}ilovi\'c~\cite{MMo07} deals with the transient distribution of the virtual waiting time in the case of discrete service times with a finite support. A process-level limit for the G/GI/n queue for the case of general service time distributions was obtained recently by Reed~\cite{Ree07}.

In this paper we examine the stationary  behavior of a GI/GI/n
system in the Halfin-Whitt regime when the service times are
lattice-valued and the support is finite. More specifically, we
consider a sequence of first-come first-served queues indexed by the
number of servers $n \to \infty$. The utilization in the $n$th
system is $1-\beta/\sqrt{n}+o(1/\sqrt{n})$ for some parameter
$\beta>0$; equivalently, the number of servers $n$ is $R_n +
\beta\sqrt{R_n} + o(\sqrt{R_n)}$, where $R_n$ is the offered load of
the $n$th system. The service distribution does not change with $n$.
The stationary number of customers and waiting time in the $n$th
system are denoted by~$Q^n$ and~$W^n$, respectively. The first main result of the paper states the existence of limiting random variables
$\hat Q$ and $\hat W$ such that $Q^n/\sqrt{n} \Rightarrow \hat Q$
and $\sqrt{n} W^n \Rightarrow \hat W$, as $n\to\infty$. The distribution
of $\hat Q$ is shown to correspond to the unique stationary distribution of some underlying continuous-state Markov chain $\{(\hat
Q_t,\hat{\vct L}_t), t \in \rZ_+\}$, where $\{\hat{\vct L}_t,
t\in\rZ_+\}$ is limiting process corresponding to the vector of
customers in different stages of service. Our second main result
identifies the exact exponential decay rate of the limiting variable
$\hat Q$. Informally, we show that
 $\pr[\hat Q>x] \approx \exp\{-{2\beta x}/{(c_a^2+c_s^2)}\}$ for large~$x$,
where $c_a$ is the (limiting) coefficient of variation of
interarrival times and $c_s$  is the coefficient of variation of
service times. Our analysis uses quadratic and geometric Lyapunov functions to establish the
tightness of sequences $\{Q^n/\sqrt{n}, n\geq 1\}$ and
$\{\sqrt{n}W^n, n\geq 1\}$.

Next we list some notational conventions used  throughout the paper.
For two vectors $\vct x$ and~$\vct y$ with elements $x_i$ and $y_i$,
respectively, ${\vct x}\cdot{\vct y}$ denotes the dot product
$\sum_i x_i y_i$. All considered vectors are row vectors, and
transposition of a vector $\vct x$  is denoted by ${\vct x}^T$. Let
${\vct K} \triangleq (1,2,\ldots,K)$. For $\vct x \in \rR^m$,
$\|\vct x\|$ denotes the $L_1$-norm: $\|\vct x\|=\sum_{i=1}^m
|x_i|$. Denote by ${\cal T}:\rR^K\rightarrow \rR^K$ a linear
operator defined by
\[
{\cal T}\{(x_1,\ldots,x_K)\} = (x_2,\ldots,x_K,0).
\]
For $\rR^k$-valued random variables $\Rightarrow$ denotes the
convergence in distribution.  Given a random variable (r.v.) $X \in\rR$,
its moment generating function is $M_X(\theta)\triangleq \E
e^{\theta X}$. For every $\theta>0$, we denote by
$\mathcal{M}_\theta$ the family of sequences of r.v.s
$\{X^n,n\ge 1\}$ such that $\limsup_{n\rightarrow \infty}
M_{X^n}(\theta)<\infty$; let $\mathcal{M}_\infty=\cap_{\theta>0}\mathcal{M}_\theta.$ Given a
r.v. $X$, we write $X\in\mathcal{M}_\theta$
($X\in \mathcal{M}_\infty)$ if $\E e^{\theta X}<\infty$ ($\E e^{\theta X} <\infty$ for every $\theta>0$). We denote by $\E_\pi[\cdot]$ the expectation operator with respect to a probability measure $\pi$; similarly, we use $\Pr_{\pi}[\cdot]$ when the probability measure $\pi$ is not clear from the context. For two reals $x,y$ we
set $x \wedge y = \min \{x,y\}$, $x \vee y = \max\{x,y\}$, $x^+ = x
\vee 0$ and $x^- = (-x)^+$; when the argument of a unary operation
is a vector or  matrix it is understood that the operator is applied
element-wise. Symbols $\rZ_+$ and $\rR_+$ denote nonnegative
integers and reals, respectively.

The paper is organized as follows. In the next section we describe
the considered model and formally  introduce the Halfin-Whitt (QED) regime. Our
main results are stated in Section~\ref{section:MainResults}. Section~\ref{sec:prelim} contains preliminary results. The proofs of the main results can be found in
Sections~\ref{sec:Thm1}, \ref{sec:Thm2}, \ref{sec:thm3} and~\ref{sec:CorollaryWaiting}.

\section{Model}

\subsection{Queueing system description} \label{sec:GGN}

We consider a sequence of first-come first-served 
queues indexed by the number of servers~$n$. The details of our model are as follows.

\vspace{.025in}
{\em Service times.} Service times are  independent and identically
distributed (i.i.d.) r.v.s, equal in distribution to a r.v. $S$
that does not depend on $n$ and takes values in a finite set $\{s_1,
\ldots, s_K\}\subset \mathbb{R}_+$.
It is assumed that the set of service time values has a common
divisor $s>0$, i.e., $s_i = k_i s$ for some $k_i\in\mathbb{N}$,
$1\leq i\leq K$. Under this assumption, without loss of generality,
we adopt $s=1$ to be the largest common divisor of service time
values. Let $p_i \triangleq \Pr[S=i], 0\le i\le K$, where $K$ is the largest index such that $p_K>0$.
We assume $p_0=0$, that is no instantaneous service is possible.
Then the expected service time is
$\mu^{-1} \triangleq \E S = \sum_{i=1}^K ip_i$; the variance of $S$
is denoted by $\sigma_s$ and the coefficient of variation by
$c_s=\mu\sigma_s$. The steady-state behavior of the system with deterministic service times ($S=1$) has been characterized in~\cite{JMM04} and, thus, we
consider $\sigma_s>0$. In this case there exist two values of the
service time that are relatively prime, i.e., $p_i p_j >0$ for some relatively prime $i\not=j$; otherwise a simple time change argument can be applied to re-scale service times. For convenience let $\vct
p=(p_1,\ldots,p_K)$ and $\tilde{\vct p}=(\tilde p_1,\ldots,\tilde
p_K)$, where $\tilde p_i=\Pr[S\ge i]=\sum_{j\ge i}p_i$ describes the
tail of the service time distribution.

\vspace{.025in} {\em Arrival times.} Customers arrive to the $n$th
system according to a stationary renewal process with interarrival
times equal in distribution to $\tau_n$. The arrival rate $\lambda_n \triangleq 1/ \expect \tau_n$ is such that $\lambda_n \to \infty$ as $n \to \infty$ while the
coefficient of variation $c_{a,n}$ of interarrival times satisfies $c_{a,n}\to c_a$ as $n\to\infty$
for some $0\le c_a<\infty$. 
In view of the assumption $S \in\mathbb{N}$ ($s=1$), it is convenient to
define $A^n_t$, $t\in\rR$, as the number of arrivals in the time interval
$(t-1,t]$ in the $n$th system. In addition,
let $a^n_t$ denote the backward recurrence time of the arrival
process at time $t$, i.e., $a^n_t \triangleq \inf\{u>0:\, A^n_{t-u,t}>0\}$, where $A^n_{s,t}$ denotes the number of arrivals in the time interval $(s,t]$ for two reals $s<t$.
Our proving method is based on an
analysis of a time-embedded process that has a Markov property. Hence, we require that the arrival process has limited dependency in its structure. To this end, it is assumed that
the appropriately scaled number of arrivals, conditioned on the particular value of the
backward recurrence time~$a$, converges to a Gaussian distribution {\em uniformly} in~$a$, i.e., for every $t\in\rR$,
\begin{equation}
\sup_{a \geq 0} \, \left |\pr\left[\frac{A^n_{t} - \lambda_n}{\sqrt{\lambda_n}} \leq x \,\Bigg|\, a^n_{t-1}=a \right]
-\pr[A  \leq x]\right | \to 0, \label{eq:arrival-Adistr}
\end{equation}
as $n\to\infty$,
where $A$ is normally distributed with zero mean and variance $c_a^2$. Additionally we assume that (since convergence in distribution does not necessarily imply the convergence of moments) that
\begin{equation}
\sup_{a \geq 0} \, \expect\left[\frac{A^n_{t} -
\lambda_n}{\sqrt{\lambda_n}} \,\Bigg|\, a^n_{t-1}=a \right]  \to 0, \label{eq:arrival-A1}
\end{equation}
as $n\to\infty$, and
\begin{equation}
\limsup_{n\to\infty} \, \sup_{a \geq 0} \, \expect\left[\left(\frac{A^n_{t} - \lambda_n}
{\sqrt{\lambda_n}}\right)^2 \Bigg|\, a^n_{t-1}=a \right]  <\infty.
\label{eq:arrival-A2}
\end{equation}
There exists a broad class of arrival processes that satisfy these assumptions. The simplest one is the class of renewal processes with interarrival times that have uniformly in $a^n_t=a$ bounded conditional second moments. For example, let $\{\zeta_i, i \in \rZ\}$ be an i.i.d. sequence of nonnegative r.v.s with unit mean and a finite second moment. By setting $\zeta_i/\lambda_n$ to be the $i$th interarrival time in the $n$th process we obtain a
process that satisfies the aforementioned assumptions due to the
Central Limit Theorem for renewal processes~\cite[p.~114]{Dur05} when $\lambda_n \to \infty$ as $n\to\infty$.

Finally, since we consider multi-server queues in their steady states, the
distribution of interarrival times should be such that the
stationary distributions of all considered quantities exists and
are unique (for all finite $n$). See comments at the beginning of Section~\ref{section:MainResults} and~\cite[Ch.~XII]{Asm03} for details.

\vspace{.025in}
{\em Quantities of interest.} The number of customers {\em awaiting service} in the $n$th
queue at time~$t$ is denoted by~$Q^n_t$ and the {\em total} number of
customers in the system is denoted by $Y_t^n$. The fact that $Y^n_t  = n + Q^n_t$ when
all servers are busy while $Q^n_t=0$ when at least one
server is idle renders
\begin{equation} \label{eq:queue-idle2}
Q^n_t = (Y^n_t - n)^+
\end{equation}
for every time instant $t$. Let $L^n_{t,k}$, $k=1,\ldots,K$, be
the number of customers in service with  remaining service times in
the interval $(k-1,k]$ at time $t$. Notation ${\vct L}_t^n =
(L^n_{t,1},\ldots,L^n_{t,K})$ renders $\|\vct L^n_t\| \leq n$, with
strict equality corresponding to the case when at least one server is idle. The
following identity then holds for all $t\in\rR_+$:
\begin{align}\label{eq:queue-idle}
Q^n_t(n-\|\vct L^n_t\|)=0.
\end{align}
Let $\D^n_{t,k}$, $k=1,\ldots,K$, be the number of customers with
service  requirement $k$ that enter service during the time interval
$(t-1,t]$; set $\vct \D^n_t=(\D^n_{t,1},\ldots,\D^n_{t,K})$. Thus,
$\|\vct \D^n_t\|$ is the total number of customers that enter
service during the time interval $(t-1,t]$ and
\begin{equation}
Q^n_{t+1}=Q^n_t+A^n_{t+1}-\|\vct \D^n_{t+1}\|. \label{eq:queue-idle3}
\end{equation}

\subsection{QED regime and scaling}
The offered load in the $n$th system is $\lambda_n/\mu$ and, hence, the  utilization is given by $\rho_n \triangleq\lambda_n/(n\mu)$. In the Halfin-Whitt (QED) regime the relationship between the utilization and number of servers satisfies
\begin{equation}\label{eq:HW}
\sqrt{n} (1-\rho_n) \to \beta,
\end{equation}
as $n \to \infty$, for some $\beta>0$, or  equivalently $n =
\lambda_n/\mu + \beta \sqrt{\lambda_n /\mu } +
o(\sqrt{\lambda_n/\mu})$ as $n \to \infty$. For notational
simplicity we let $\beta_n$ be a quantity satisfying
$n=\lambda_n/\mu + \beta_n \sqrt{n}$, i.e.,
\[
\beta_n = (n - \lambda_n/\mu)/\sqrt{n} \to \beta,
\]
as $n \to\infty$. Under such a scaling, the following centered and
scaled versions of r.v.s indexed by $t\in\rR_+$ 
are of interest:
\begin{align}
\hat A^n_t &\triangleq (A^n_t - \lambda_n)/\sqrt{n}, \nonumber \\
\hat Q^n_t &\triangleq  Q^n_t / \sqrt{n}, \nonumber \\ 
\hat{\vct L}^n_t &\triangleq (\vct L^n_t - \lambda_n \tilde {\vct p})/\sqrt{n}, \nonumber \\ 
\hat {\D}^n_t &\triangleq (\|\vct \D^n_t \|-\lambda_n) /\sqrt{n}, \label{eq:hatD}\\
\hat {\vct \D}^n_t &\triangleq (\vct \D^n_t - \|\vct \D^n_t \| \vct p ) /\sqrt{n}, \nonumber \\
\hat Y^n_t &\triangleq  \sum_{i=1}^K  \hat L^n_{t,i}  + \hat Q^n_t = (Y^n_t - \lambda_n/\mu)/\sqrt{n}. \label{eq:hatY}
\end{align}
Given these definitions, the counterparts of~(\ref{eq:queue-idle2}), (\ref{eq:queue-idle}) and (\ref{eq:queue-idle3}) are
\begin{align}
\hat Q^n_t &= (\hat Y^n_t - \beta_n)^+, \label{eq:Y=Q-beta}\\
\hat Q^n_t &\left(\beta_n - \sum_{k=1}^K \hat L^n_{t,k}\right) = 0 \nonumber
\end{align}
and
\begin{equation}
\hat Q^n_{t+1} =\hat Q^n_t+\hat A^n_{t+1}-\hat \D^n_{t+1}, \label{eq:scaledDynamics}
\end{equation}
respectively.

\section{Main results}\label{section:MainResults}

A multi-server queue can be described by the standard Kiefer-Wolfowitz vector~\cite{KWo55} of residual workloads, e.g.,
see~\cite{Asm03,BBr03}. Provided the stability condition
$\rho_n=\lambda_n/(n\mu)<1$ is satisfied and the arrival process is renewal, in~\cite{KWo55} it was
established that all relevant stationary measures exist when the
system is observed just before arrivals, i.e., stationary measures
exists for this particular time-embedded process. In order to ensure the
existence of stationary probabilities for continuous-time processes
$\{(Q^n_t, {\vct L}^n_t), t\in\rR_+\}$ and $\{W^n_t,
t\in\rR_+\}$ additional conditions are needed~\cite[p.~348]{Asm03}.
We assume that these stationary distributions exists and are unique. Let $\pi_n$
be the stationary probability law of $\{(\hat Q^n_t, \hat{\vct
L}^n_t), t\in\rR_+\}$, i.e., $\pi_n$ is time invariant with respect
to $t$. We characterize the limit of $\pi_n$ as $n \to \infty$
in terms of the stationary probability of a certain discrete-time process
$\{(\hat Q_t,\hat{\vct L}_t),\, t \in \rZ_+\}$. Although the
processes $\{(\hat Q^n_t,\hat {\vct L}^n_t), t \in\rR\}$ are inherently
continuous-time, for the purposes of characterizing their stationary
distributions it is sufficient to consider their time-embedded
versions ($t\in\rZ_+$ due to the lattice-valued nature of service
times $S\in\rN$). Such an approach has an advantage since these discrete-time
processes have a tractable Markovian structure that is amenable to the
Lyapunov function method~\cite{MTw93}.

Next we construct the Markov chain $\{(\hat Q_t,\hat{\vct L}_t),\, t
\in \rZ_+\}$  with state space $\rR^{K+1}$. To this end, let $\{\hat A_t, \, t\in
\rZ_+\}$ be an i.i.d. sequence of zero mean normal r.v.s
with variance $\mu c_a^2$. Also let $\{\hat{\vct \D}_t, \, t\in
\rZ_+\}$ be an i.i.d. sequence of normal random vectors with the zero
mean and covariance matrix $\mu\Sigma$, elements of $\Sigma$
defined by
\begin{align}\label{eq:Sigma}
\Sigma_{ij} = \begin{cases}
(1-p_i) p_i, & 1\leq i=j\leq K, \\
- p_i p_j, & 1\leq i \not= j\leq K;
\end{cases}
\end{align}
the sequences $\{\hat A_t,\, t\in\rZ\}$ and $\{\hat{\vct \D}_t,\, t\in\rZ\}$ are independent.
The process $\{(\hat Q_t,\hat{\vct L}_t), \,t\in\rZ_+\}$ is defined by the following three recursions
\begin{align}
\hat  {\vct  L}_{t+1} &= {\cal T}\{\hat{\vct  L}_t\} + \hat{\vct \D}_{t+1} + \hat \D_{t+1} {\vct  p}, \label{eq:L'} \\
\hat Q_{t+1} &=\left( \hat Q_t +\hat A_{t+1} + \sum_{k=2}^K \hat L_{t,k} - \beta \right)^+,
\label{eq:Q'} \\
\hat \D_{t+1}&=\left(\hat Q_t + \hat A_{t+1} \right) \wedge \left(\beta  - \sum_{k=2}^K \hat L_{t,k}\right), \label{eq:D'}
\end{align}
and an initial condition $(\hat Q_0,\hat{\vct L}_0)$ that is independent of
$\{\hat A_t, \, t\in \rZ_+\}$ and $\{\hat {\vct \D}_t, \, t\in \rZ_+\}$; the random vector $(\hat Q_0,\hat{\vct L}_0)$ satisfies $\sum_{k=1}^K \hat L_{0,k} \leq \beta$ and $\hat Q_0 (\sum_{k=1}^K \hat L_{0,k} - \beta)=0$ by definition. It is straightforward to verify that the preceding defines a continuous-state Markov chain due to the i.i.d. nature of $\{\hat A_t, \, t\in
\rZ_+\}$ and $\{\hat{\vct \D}_t, \, t\in \rZ_+\}$. Observe that (\ref{eq:L'}), (\ref{eq:Q'}) and (\ref{eq:D'}) imply, for all $t \in \rN$, $\sum_{k=1}^K \hat L_{t,k} \leq \beta$
and
\[
\hat Q_t \left(\sum_{k=1}^K \hat L_{t,k} - \beta\right) = 0.
\]
We define a process $\{\hat Y_t, t \in\rZ_+\}$ by $\hat Y_t = \sum_{k=1}^K \hat L_{t,k} + \hat Q_t$
and note that it satisfies $\hat Q_t = (\hat Y_t - \beta)^+$, $t\in\rZ_+$; we often refer
to this process as the limiting number of customers in the queue.

Our first main result states the existence of a distributional limit
of $(\hat Q^n, \hat{\vct L}^n)$, as $n\rightarrow\infty$, where the pair $(\hat Q^n, \hat{\vct L}^n)$ is distributed according to $\pi_n$. In
particular, we relate the sequence of stationary distributions
$\{\pi_n, n\geq 1\}$ of $\{(\hat Q^n_t, \hat {\vct L}^n_t), t \in \rR_+\}$ to
the stationary distribution of the discrete-time chain $\{(\hat Q_t, \hat {\vct L}_t), t \in \rZ_+\}$. The proof is based on a tightness
argument and can be found in Section~\ref{sec:Thm1}.

\begin{theorem}\label{theorem:MainResultTightness} $\pi_n \Rightarrow\pi_*$ as $n\to\infty$, where $\pi_*$ is the unique stationary distribution of the Markov chain $\{(\hat Q_t, \hat {\vct L}_t), t \in \rZ_+\}$.
\end{theorem}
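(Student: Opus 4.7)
My approach decomposes into three steps: (a) tightness of the sequence of stationary laws $\{\pi_n\}_{n\geq 1}$; (b) weak convergence of the one-step transition kernels of the pre-limit chains $(\hat Q^n_t,\hat{\vct L}^n_t)$ to the kernel $P$ of the limit chain, so that any subsequential weak limit of $\pi_n$ is $P$-invariant; and (c) uniqueness of the $P$-invariant distribution $\pi_*$. Together (a)--(c) force $\pi_n\Rightarrow\pi_*$.

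For (a), I would deploy a mixed quadratic Lyapunov function
\[
V(\hat Q, \hat{\vct L}) = \hat Q^2 + \gamma\|\hat{\vct L}\|^2
\]
with $\gamma>0$ sufficiently small. Using the recursion~(\ref{eq:scaledDynamics}) together with~(\ref{eq:Y=Q-beta}), the conditional mean increment of $\hat Q^n_t$ carries a crucial term $-\beta_n\to -\beta<0$ whenever the server block is saturated and $\hat Q^n_t$ is bounded away from zero. The uniform second-moment bound~(\ref{eq:arrival-A2}) on arrivals, combined with the boundedness of the per-step change of $\hat{\vct L}^n$ (service requirements lie in a finite set), yields a Foster--Lyapunov inequality of the form $\E[V_{t+1}-V_t\mid \hat Q^n_t=q,\hat{\vct L}^n_t=\vct l]\leq -c_1(q+\|\vct l\|)+c_2$ outside a compact set, uniformly in~$n$. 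Applying this under $\pi_n$ gives $\sup_n \E_{\pi_n} V<\infty$, and hence tightness. The delicate point is that the negative drift from $-\beta_n$ must not be masked by the coupling between $\hat Q^n$ and $\hat{\vct L}^n$, which is why the mixed form of $V$ and the Halfin--Whitt staffing~(\ref{eq:HW}) are both essential.

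For (b), I would verify that for every bounded continuous $f$ one has $P_n f(q,\vct l)\to P f(q,\vct l)$ uniformly on compacts. This follows by combining the Gaussian CLT~(\ref{eq:arrival-Adistr})--(\ref{eq:arrival-A1}) for the scaled arrival counts, a conditional multinomial CLT for the allocation of the $\|\vct \D^n_{t+1}\|$ entering customers across the requirements $1,\ldots,K$ (delivering covariance $\mu\Sigma$ for $\hat{\vct \D}^n_{t+1}$), and the continuous mapping theorem applied to the continuous operations $(\cdot)^+$ and $\wedge$ that appear in~(\ref{eq:Q'})--(\ref{eq:D'}). Together with~(a), standard Feller-type arguments identify every subsequential weak limit of $\pi_n$ as a fixed point of $P$.

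For (c), I would exploit that the shift operator $\mathcal{T}$ on $\rR^K$ satisfies $\mathcal{T}^K=0$: iterating~(\ref{eq:L'}) expresses $\hat{\vct L}_{t+K}$ as a linear function of only the $K$ most recent noise pairs $(\hat{\vct \D}_s,\hat \D_s)$, with no dependence on $\hat{\vct L}_t$. A synchronous coupling of two copies of the chain driven by common noise therefore makes their $\hat{\vct L}$ coordinates agree after $K$ steps, reducing uniqueness to the one-dimensional reflected $\hat Q$-dynamics, which an additional Lyapunov/coupling argument settles. The main obstacle is step~(a): in the Halfin--Whitt regime the drift sits exactly at the boundary between positive and negative recurrence, so producing a drift inequality that is \emph{simultaneously} valid for all $n$ requires carefully tying the $-\beta$ of staffing slack to the fluctuations of $\hat{\vct L}^n$; this is precisely where the uniform second-moment assumption~(\ref{eq:arrival-A2}) and the lattice-with-finite-support structure of the service distribution become indispensable.
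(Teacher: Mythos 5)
There is a genuine gap, and it sits exactly at the point you flag as ``the main obstacle'': step (a). The one-step Foster--Lyapunov inequality you assert for $V(\hat Q,\hat{\vct L})=\hat Q^2+\gamma\|\hat{\vct L}\|^2$ is false. When $\hat Q^n_t=q>0$ the busy-server constraint forces $\sum_{k=1}^K\hat L^n_{t,k}=\beta_n$, so the queue recursion reduces to $\hat Q^n_{t+1}=(q+\hat A^n_{t+1}-\hat L^n_{t,1})^+$ and the conditional one-step drift of $\hat Q^n$ is $-\hat L^n_{t,1}$, a \emph{signed coordinate of the state}, not $-\beta_n$. At a state with $q$ arbitrarily large, $\hat L^n_{t,1}=0$ and the remaining coordinates bounded, the conditional increment of $\hat Q^2$ is $\E[(\hat A)^2]+o(1)\geq 0$ and the increment of $\gamma\|\hat{\vct L}\|^2$ is $O(1)$, so no inequality of the form $-c_1(q+\|\vct l\|)+c_2$ can hold outside any compact set. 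The staffing slack $-\beta$ is simply not visible in one step of the chain $(\hat Q_t,\hat{\vct L}_t)$: it emerges only over a window of $K$ steps, through the feedback from admissions to future departures. This is precisely why the paper builds its quadratic Lyapunov function (Section~\ref{section:Lyapunov2}) on the $K$-step history $\bar{\vct Y}^n_t=(\hat Y^n_t,\ldots,\hat Y^n_{t-K+1})$ together with the noise history $\bar{\vct Z}^n_t$: by Lemma~\ref{lemma:Yevo} the particular functional $\tilde{\vct p}\cdot\bar{\vct Y}^n_t+\vct\alpha\cdot\bar{\vct Z}^n_t$ evolves, on the event that no server was idle during the last $K$ slots, as a random walk with drift exactly $-\beta_n$ plus a fresh noise term ${\vct K}\cdot\hat{\vct Z}^n_t$ (Proposition~\ref{prop:2LyapunovBound}); the exception event is handled separately (Lemma~\ref{lemma:inRbound}) via a number-theoretic reachability argument. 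Finding this functional is the real content of the tightness proof, and your proposal does not supply a substitute.

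Step (c) also has a flaw. You claim that iterating~(\ref{eq:L'}) makes $\hat{\vct L}_{t+K}$ a function of the last $K$ noise pairs only, so that a synchronous coupling identifies the $\hat{\vct L}$-coordinates after $K$ steps. But $\hat\D_{s}$ in~(\ref{eq:L'}) is not exogenous noise: by~(\ref{eq:D'}) it equals $(\hat Q_{s-1}+\hat A_{s})\wedge(\beta-\sum_{k\geq2}\hat L_{s-1,k})$, hence it depends on the state, and two copies driven by common $(\hat A_s,\hat{\vct\D}_s)$ will in general produce different $\hat\D_s$ and therefore different $\hat{\vct L}_{s}$ forever. The paper instead proves uniqueness by a Harris-chain minorization: it uses the unbounded support of the Gaussian arrivals to drive the chain, with positive probability from any state, into a small set where $\hat Q=0$ and the $\hat L$-coordinates are near a fixed level, and then uses the positive joint density of $(\hat A,\hat{\vct\D})$ to obtain the minorization condition~(\ref{eq:Harris2}). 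Your step (b) is essentially the paper's Part I (continuous mapping plus identification of subsequential limits), though note that one must also justify the asymptotic independence of $\hat A^n_{t+1}$ from the past, which the paper does via the uniform-in-$a$ assumption~(\ref{eq:arrival-Adistr}) and the backward recurrence time.
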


\begin{proof}[Outline of the proof:] The proof consists of three parts:
(i) demonstrating that the sequence $\{(\hat Q^n_t, \hat {\vct L}^n_t), n \geq 1\}$
is tight with respect to the sequence of distributions $\{\pi_n, n \geq 1\}$ (as $n\to\infty$), (ii)
showing that the stationary distribution of $\{(\hat Q^n_t, \hat {\vct L}^n_t), t \in \rR_+\}$ converges
to a stationary distribution of $\{(\hat Q_t, \hat {\vct L}_t), t \in \rZ_+\}$ as $n\to\infty$,
and (iii) proving that $\{(\hat Q_t, \hat {\vct L}_t), t \in\rZ_+\}$ has a unique stationary distribution $\pi_*$.
We briefly outline the main argument for (i), as the proofs of (ii) and (iii) follow
more or less a standard argument.

A polynomial function $\Psi_\theta({\vct y}, {\vct z}) = (\tilde {\vct p} \cdot {\vct y} + {\vct \alpha} \cdot {\vct z})^\theta$ is defined, with ${\vct \alpha} \in \rR^{K^2}$ being fixed (Section~\ref{section:Lyapunov2}); function $\Psi_1$ can take negative values. For notational simplicity let $\bar {\vct Y}^n_t = (\hat Y^n_{t},\ldots,\hat Y^n_{t-K+1})$ and $\bar {\vct Z}^n_t = (\hat {\vct Z}^n_{t},\ldots,\hat {\vct Z}^n_{t-K+1})$, where $\hat {\vct Z}^n_t = \hat {\vct \D}^n_t + {\vct p} \hat A^n_t$. Based on preliminary results (Sections~\ref{sec:tep} and~\ref{section:NumberSystem}) the following is derived (Section~\ref{section:Lyapunov2}) for some set ${\cal R}^n$:
\[
\expect\left[\Psi_2(\bar {\vct Y}^n_{t}, \bar {\vct Z}^n_{t}) \ind{\bar {\vct Y}^n_{t-1} \not\in {\cal R}^n} - \Psi_2(\bar {\vct Y}^n_{t-1}, \bar {\vct Z}^n_{t-1}) | \bar {\vct Y}^n_{t-1}, \bar {\vct Z}^n_{t-1} \right] \leq -\delta \Psi_1(\bar {\vct Y}^n_{t-1}, \bar {\vct Z}^n_{t-1}) + \psi
\]
for some $\delta>0$, $\psi<\infty$ and all $n$ large enough (Proposition~\ref{prop:2LyapunovBound}), and
\[
\limsup_{n\to\infty} \expect_{\pi_n} [\Psi_2(\bar {\vct Y}^n_{t}, \bar {\vct Z}^n_{t}) \ind{\bar {\vct Y}^n_{t-1}, \bar {\vct Z}^n_{t-1} \in {\cal R}^n} ] < \infty
\]
(Lemma~\ref{lemma:inRbound}). These two relationships can be combined (Theorem~\ref{theorem:BoundsStationaryPolynomial}) to obtain
\[
\limsup_{n\to\infty} \expect_{\pi_n} [\Psi_1(\bar {\vct Y}^n_{t}, \bar {\vct Z}^n_{t})] < \infty.
\]
On the other hand, the expectation of the negative part of $\Psi_1(\bar {\vct Y}^n_{t}, \bar {\vct Z}^n_{t})$ is also bounded in the limit (Lemma~\ref{lemma:<0})
\[
\limsup_{n\to\infty} \expect_{\pi_n} [-\Psi_1(\bar {\vct Y}^n_{t}, \bar {\vct Z}^n_{t}) \ind{\Psi_1(\bar {\vct Y}^n_{t}, \bar {\vct Z}^n_{t})<0} ] < \infty.
\]
Finally, the tightness of $\{\hat Y^n_t, n \geq 1\}$ (and hence of $\{\hat Q^n_t, n \geq 1\}$ since $\hat Q^n_t = (\hat Y^n_t-\beta)^+$) with respect to stationary $\{\pi_n, n \geq 1\}$ is due to $\expect_{\pi_n} [\Psi_1(\bar {\vct Y}^n_{t}, \bar {\vct Z}^n_{t})] = \expect_{\pi_n}[\tilde {\vct p} \cdot \bar {\vct Y}^n_t ] = \expect_{\pi_n} \hat Y^n_t/\mu$.
\end{proof}

Let $(\hat Q, \hat{\vct L})$ be distributed according to $\pi_*$, i.e., if $Q^n$ is the stationary number of customers in the $n$th queue, then $Q^n/\sqrt{n} \Rightarrow \hat Q$ as $n\to\infty$. It is immediate that $\pr[\hat Q=0] \in (0,1)$ since the Gaussian term $\hat A_t$ in (\ref{eq:L'}), (\ref{eq:Q'}) and (\ref{eq:D'}) has infinite support.
The convergence $\pi_n\Rightarrow \pi_*$ implies $\Pr[\hat Q^n=0] \to \pr[\hat Q=0]\in (0,1)$ as $n\to\infty$, and, thus, the system is indeed in the QED regime.

Our second result establishes the critical exponent for the moment generating function of~$\hat Q$. The proof can be found in Section~\ref{sec:Thm2}.

\begin{theorem}\label{theorem:MainResultDecayRate}
Let $\theta^*= 2\beta/(c_a^2+c_s^2)$. Then $\E e^{\theta \hat Q} <
\infty$ if  $\theta<\theta^*$ and $\E e^{\theta \hat Q} = \infty$ if
$\theta>\theta^*$.
\end{theorem}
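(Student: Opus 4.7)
The plan is to change coordinates to a scaled residual-workload variable
\[
V_t \triangleq \sum_{k=1}^K k\, \hat L_{t,k} + \hat Q_t/\mu,
\]
which evolves cleanly as a reflected random walk with i.i.d.\ Gaussian increments in the queueing region. Using (\ref{eq:L'})--(\ref{eq:D'}) together with the capacity identity $\sum_k \hat L_{t,k} = \beta$ (so $\hat\D_{t+1} = \hat L_{t,1}$) inside $\{\hat Q_t > 0\}$, a direct telescoping computation gives
\[
V_{t+1} - V_t = \hat A_{t+1}/\mu + \vct K \cdot \hat{\vct\D}_{t+1} - \beta,
\]
with $\vct K = (1,2,\ldots,K)$. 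The right-hand side is Gaussian with mean $-\beta$ and variance
\[
\mu c_a^2/\mu^2 + \mu\, \vct K \Sigma \vct K^T = c_a^2/\mu + \mu\sigma_s^2 = (c_a^2+c_s^2)/\mu,
\]
where I used $\vct K \Sigma \vct K^T = \sum_k k^2 p_k - (\sum_k k p_k)^2 = \sigma_s^2$ from (\ref{eq:Sigma}) and $c_s = \mu\sigma_s$.

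For the finiteness direction ($\theta < \theta^*$), I will use the geometric Lyapunov function $U_\theta(\hat Q,\hat{\vct L}) = \exp(\theta\mu V_t)$. Its one-step conditional drift in the queueing region factors as
\[
\E\bigl[U_\theta(\hat Q_{t+1},\hat{\vct L}_{t+1})\,\big|\,\hat Q_t,\hat{\vct L}_t\bigr] = U_\theta(\hat Q_t,\hat{\vct L}_t)\cdot\exp\!\bigl(-\theta\mu\beta + \tfrac12\theta^2\mu(c_a^2+c_s^2)\bigr),
\]
which is strictly less than $U_\theta$ precisely when $\theta < 2\beta/(c_a^2+c_s^2) = \theta^*$. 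On the boundary piece $\{\hat Q_t = 0\}$ the variable $V_t$ is bounded in terms of $\hat{\vct L}_t$ alone, and the truncation in (\ref{eq:D'}) can only decrease $V_{t+1}-V_t$, so the drift condition survives with a bounded additive correction $C(\theta)$. The standard Foster-Lyapunov criterion then yields $\E_{\pi_*} U_\theta < \infty$, and tightness of $\hat{\vct L}$ under $\pi_*$ (already used for Theorem~\ref{theorem:MainResultTightness}) lets one peel off the $\sum_k k\hat L_{t,k}$ contribution to conclude $\E e^{\theta\hat Q}<\infty$.

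For the divergence direction ($\theta > \theta^*$), I will exhibit a matching lower bound on the tail $\Pr[\hat Q>x]$ by starting from a regeneration state near the boundary and conditioning on the chain remaining in the queueing region for $T = \lceil x/(\mu\beta)\rceil$ steps. Over such a window $V_T - V_0 + T\beta$ is the sum of $T$ i.i.d.\ Gaussians of variance $(c_a^2+c_s^2)/\mu$, so a standard Cram\'er lower bound applied to the event $\{V_T-V_0 \geq x/\mu\}$ gives probability at least $\exp(-\theta^* x(1+o(1)))$. A monotone comparison shows that conditioning on staying in the queueing region over this window costs only a polynomial factor, hence $\Pr[\hat Q>x] \geq \exp(-(\theta^*+o(1))x)$, and $\E e^{\theta\hat Q} = \infty$ follows for every $\theta > \theta^*$ by a standard integration of the tail against $e^{\theta x}$.

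The main technical hurdle is handling the boundary reflection: the clean random-walk identity for $V_{t+1}-V_t$ only holds inside the queueing region, and excursions to the boundary must be shown to contribute sub-exponentially in both directions. For the finiteness direction this is a compact-set correction in the drift inequality, straightforward once $V_t$ is bounded on $\{\hat Q_t = 0\}$. For the divergence direction one must verify that the probability of remaining in the queueing region over a window of length $x/\beta$ is itself at least polynomially small in $x$, which follows from the positive recurrence of $(\hat Q_t,\hat{\vct L}_t)$ together with the symmetry of the Gaussian random walk around its drift. Once these boundary effects are shown to be harmless, the Cram\'er-Lundberg exponent $2\beta\mu/(c_a^2+c_s^2)$ for $V_t$ translates, via $\hat Q_t = \mu V_t - \mu\sum_k k \hat L_{t,k}$, directly into the critical exponent $\theta^* = 2\beta/(c_a^2+c_s^2)$ for $\hat Q$.
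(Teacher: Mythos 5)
Your choice of Lyapunov function is genuinely different from the paper's: you work with the residual--workload functional $V_t=\sum_k k\hat L_{t,k}+\hat Q_t/\mu$ of the \emph{current} Markov state, whereas the paper uses $\tilde{\vct p}\cdot\bar{\vct Y}_t+{\vct\alpha}\cdot\bar{\vct Z}_t$, a functional of a length-$K$ history window. Your telescoping identity and the variance computation are correct; in fact the recursions (\ref{eq:L'})--(\ref{eq:D'}) give the stronger identity $V_{t+1}-V_t=-\sum_k\hat L_{t,k}+\hat A_{t+1}/\mu+{\vct K}\cdot\hat{\vct \D}_{t+1}$ for \emph{all} $t$, which reduces to your formula exactly when $\sum_k\hat L_{t,k}=\beta$. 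This is an attractive reformulation, but several steps you declare routine are where the real work lies.

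First, a sign error with consequences: since $\sum_k\hat L_{t,k}\le\beta$ always, the boundary can only \emph{increase} the increment $V_{t+1}-V_t$ relative to the saturated case, not decrease it as you claim; so the drift inequality genuinely fails on the exception set and $\E_{\pi_*}[e^{\theta\mu V_{t+1}}\ind{\hat Q_t=0}]$ must be bounded separately. Second, that bound is not a compact-set correction: on $\{\hat Q_t=0\}$ one has $V_t=\sum_k k\hat L_{t,k}$, which is \emph{unbounded} --- the constraint $\sum_k\hat L_{t,k}\le\beta$ does not control the weighted sum (take $\hat L_{t,K}=M$, $\hat L_{t,1}=-M$). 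Controlling the exponential moment on the exception set is the hard part of the finiteness direction; the paper needs the number-theoretic Lemma~\ref{lemma:number} and the recursion-unrolling of Lemmas~\ref{lemma:inRbound} and~\ref{lemma:inRboundLimit} for precisely this. Third, tightness of $\hat{\vct L}$ under $\pi_*$ cannot convert $\E e^{\theta\mu V}<\infty$ into $\E e^{\theta\hat Q}<\infty$: since $\hat Q_t=\mu V_t-\mu\sum_k k\hat L_{t,k}$ you need $(\sum_k k\hat L_{t,k})^-\in\mathcal{M}_\infty$ plus a H\"older argument (cf.\ Proposition~\ref{prop:productDthetaDinfty}); this is the analogue of the paper's bound (\ref{eq:T2Pe1040}), which occupies the entire second half of its proof. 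Finally, for the divergence direction your Cram\'er lower bound could be pushed through, but the claim that confinement to the queueing region costs only a polynomial factor is asserted, not proved; note that your own identity makes this detour unnecessary, since $V_{t+1}-V_t\ge -\beta+\hat A_{t+1}/\mu+{\vct K}\cdot\hat{\vct \D}_{t+1}$ holds with \emph{no} exception set, so for $\theta>\theta^*$ stationarity immediately forces $\E_{\pi_*}e^{\theta\mu V}=\infty$ --- exactly the mechanism of (\ref{eq:PhiincreaseLimit}) --- after which only the (still nontrivial) transfer from $V$ to $\hat Q$ remains.
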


\begin{proof}[Outline of the proof:] Here we outline just the proof of the
statement $\expect e^{\theta \hat Q} < \infty$ if $\theta<\theta^*$.
The key idea is to define a geometric Lyapunov function
$\Phi_{\theta}({\vct y}, {\vct z}) = \exp\left\{\theta \tilde {\vct
p} \cdot {\vct y}  + \theta {\vct \alpha} \cdot {\vct z}\right\}$
(Section~\ref{section:LyapunovFunction}) with ${\vct \alpha} \in
\rR^{K^2}$ being fixed. Based on the rules according to which the
number of customers in the system evolves
(Section~\ref{section:NumberSystem}) it is possible to define a set
$\cal R$ (Section~\ref{section:LyapunovFunction}) such that
\[
\expect \left[\Phi_\theta(\bar {\vct Y}_{t}, \bar {\vct Z}_{t}) \ind{\bar {\vct Y}_{t-1} \not\in {\cal R}} | \bar {\vct Y}_{t-1}, \bar {\vct Z}_{t-1} \right] \leq (1-\delta) \Phi_\theta(\bar {\vct Y}_{t-1}, \bar {\vct Z}_{t-1})
\]
for all $\theta < \theta^*/\mu$ and some $\delta<1$ (Proposition~\ref{prop:LyapunovBoundLimit}), and $\expect_{\pi_*} [\Phi_\theta(\bar {\vct Y}_{t}, \bar {\vct Z}_{t}) 1\{\bar {\vct Y}_{t-1} \in {\cal R}\}] < \infty$ for $\theta>0$ (Lemma~\ref{lemma:inRboundLimit}). The preceding two inequalities are combined to conclude $\expect_{\pi_*} \Phi_\theta(\bar {\vct Y}_{t}, \bar {\vct Z}_{t}) < \infty$ for $\theta<\theta^*/\mu$ (Theorem~\ref{theorem:BoundsStationaryGeometric}) and
\[
\expect_{\pi_*} \left[\exp\left\{ \theta {\vct p} \cdot \bar {\vct Y}_t \right\} \right] < \infty
\]
follows since $\hat {\vct J}_t$ and $\hat A_t$ are normally
distributed by definition.  Finally, the proof is concluded by
showing that
\[
\expect_{\pi_*} \left[\exp\left\{\theta | \hat Y_t/\mu - {\vct p} \cdot \bar {\vct Y}_t | \right\}\right] < \infty
\]
for all $\theta>0$.
\end{proof}

The theorem is stated for the limiting queue length $\hat Q$. With additional conditions  on the arrival processes a weaker result can be obtained for the pre-limit variables $\hat Q^n$ by only slightly modifying the proof of Theorem~\ref{theorem:MainResultDecayRate}. Namely, it is needed that the arrival process satisfies
\begin{equation}
\limsup_{n\to\infty} \sup_{a\ge 0}\,  \expect \left[e^{\theta |\hat A^n_t|} \,\big|\, a^n_{t-1}=a\right] \le e^{c\theta^2} \label{eq:arrival-Aexp}
\end{equation}
for some $c<\infty$ and for every $\theta>0$.
Then, the following result is established by exploiting the preceding relationship.

\begin{theorem}\label{theorem:MainResultExpErg}
Suppose that~(\ref{eq:arrival-Aexp}) holds. There exists $\theta>0$ such that
\[
\limsup_{n\to\infty} \expect e^{\theta Q^n/\sqrt{n}} < \infty.
\]
\end{theorem}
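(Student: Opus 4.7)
The plan is to carry out the same geometric Lyapunov argument used to prove Theorem~\ref{theorem:MainResultDecayRate}, but applied to the pre-limit Markov chain $\{(\hat Q^n_t,\hat{\vct L}^n_t),\, t\in\rZ_+\}$ rather than its scaling limit, with all bounds made uniform in $n$. I would work with the same geometric Lyapunov function $\Phi_\theta(\vct y,\vct z)=\exp\{\theta\tilde{\vct p}\cdot\vct y+\theta\vct\alpha\cdot\vct z\}$, the same augmented vectors $\bar{\vct Y}^n_t$ and $\bar{\vct Z}^n_t$ introduced in the outline of Theorem~\ref{theorem:MainResultTightness}, and an appropriate pre-limit recurrent set $\cal R^n$ that converges to the set $\cal R$ used in the limit. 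Only two features of the dynamics differ from the limiting chain: (i) the arrival increment $\hat A^n_t$ is not Gaussian, but condition~(\ref{eq:arrival-Aexp}) provides the uniform bound $\limsup_n\sup_a\expect[e^{\theta|\hat A^n_t|}\mid a^n_{t-1}=a]\le e^{c\theta^2}$; (ii) the composition vector $\hat{\vct\D}^n_t$ of entering customers is a centered multinomial rather than Gaussian, but conditioned on $\|\vct\D^n_t\|$ its exponential moments are controlled uniformly in $n$ by standard multinomial concentration.

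The next step is to establish a uniform drift inequality
\[
\expect\!\left[\Phi_\theta(\bar{\vct Y}^n_{t},\bar{\vct Z}^n_{t})\ind{\bar{\vct Y}^n_{t-1}\notin\cal R^n}\,\big|\,\bar{\vct Y}^n_{t-1},\bar{\vct Z}^n_{t-1}\right]\le(1-\delta)\,\Phi_\theta(\bar{\vct Y}^n_{t-1},\bar{\vct Z}^n_{t-1})
\]
for some $\theta>0$, $\delta\in(0,1)$ and all sufficiently large $n$. This is the pre-limit analogue of Proposition~\ref{prop:LyapunovBoundLimit}. The derivation mirrors that proof, with the Gaussian one-step moment generating functions replaced by the upper bound $e^{c\theta^2}$ coming from~(\ref{eq:arrival-Aexp}) together with the multinomial exponential bound. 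Since Theorem~\ref{theorem:MainResultExpErg} asserts only the existence of some $\theta>0$ rather than the sharp critical exponent, I may freely restrict $\theta$ to a small neighborhood of zero; then the quadratic $O(c\theta^2)$ correction is dominated by the negative linear drift $-\beta_n\theta$ produced outside $\cal R^n$ (using $\beta_n\to\beta>0$), yielding a contraction factor $(1-\delta)$ with $\delta$ uniform in $n$.

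In parallel, I would establish a uniform version of Lemma~\ref{lemma:inRboundLimit}, namely $\limsup_{n}\expect_{\pi_n}[\Phi_\theta(\bar{\vct Y}^n_{t},\bar{\vct Z}^n_{t})\ind{\bar{\vct Y}^n_{t-1}\in\cal R^n}]<\infty$. Because $\cal R^n$ is bounded on the scaled axis, $\Phi_\theta$ itself is bounded on $\{\bar{\vct Y}^n_{t-1}\in\cal R^n\}$, so the one-step expected value reduces to controlling $\expect[\exp\{\theta\tilde{\vct p}\cdot\hat{\vct L}^n_t+\theta\vct\alpha\cdot\hat{\vct Z}^n_t\}\mid\text{bounded history}]$, which again follows from~(\ref{eq:arrival-Aexp}) and the multinomial exponential moment bound. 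Combining the two inequalities through the standard Foster--Lyapunov iteration (the pre-limit counterpart of Theorem~\ref{theorem:BoundsStationaryGeometric}) then gives $\limsup_n\expect_{\pi_n}\Phi_\theta(\bar{\vct Y}^n_t,\bar{\vct Z}^n_t)<\infty$. Finally, since $\tilde{\vct p}\cdot\bar{\vct Y}^n_t$ contains the component $\tilde{\vct p}\cdot\hat{\vct L}^n_t/\mu+\hat Q^n_t/\mu$ up to terms whose exponential moments are uniformly controlled, and $Q^n/\sqrt{n}=\hat Q^n_t=(\hat Y^n_t-\beta_n)^+\le\tilde{\vct p}\cdot\bar{\vct Y}^n_t/\mu+O(1)$, this yields the desired $\limsup_n\expect e^{\theta' Q^n/\sqrt{n}}<\infty$ for some possibly smaller $\theta'>0$.

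The hardest step will be the verification of the drift inequality with $\delta$ bounded away from zero \emph{uniformly in $n$}. In the limiting setting one exploits the exact Gaussianity of $\hat A$ and $\hat{\vct\D}$ to evaluate the moment generating function explicitly; in the pre-limit one has only the inequality $e^{c\theta^2}$ from~(\ref{eq:arrival-Aexp}) and must track the dependence of the drift on the backward recurrence time $a^n_{t-1}$ (which is why the supremum over $a$ in~(\ref{eq:arrival-Aexp}) is essential). The delicate point is to choose $\theta$ small enough that the unavoidable $O(c\theta^2)$ slack is absorbed by the linear $-\beta_n\theta$ drift, while still retaining a strictly positive exponent in the final conclusion.
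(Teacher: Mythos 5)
Your overall strategy coincides with the paper's: the uniform-in-$n$ drift inequality you describe is exactly Proposition~\ref{prop:LyapunovBoundn}, the exception-set bound is the second part of Lemma~\ref{lemma:inRboundLimit}, and the combination step is Theorem~\ref{theorem:BoundsStationaryGeometric} (which is already formulated for a \emph{sequence} of Markov chains, so no pre-limit counterpart needs to be built). The drift part of your argument is sound: restricting to small $\theta$ so that the $O(c\theta^2)$ slack from~(\ref{eq:arrival-Aexp}), together with the multinomial bound $(\expect e^{\theta(S-1/\mu)/\sqrt{n}})^n\to e^{\theta^2\sigma_s^2/2}$ of Lemma~\ref{lemma:Dbound}, is absorbed by the linear term $-\beta_n\theta$ is precisely how the paper obtains a $\delta$ uniform in $n$. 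The final translation from $\limsup_n\expect_{\pi_n}\Phi_\theta<\infty$ to $\limsup_n\expect e^{\theta' Q^n/\sqrt{n}}<\infty$ is also fine, modulo the harmless imprecision that $\tilde{\vct p}\cdot\bar{\vct Y}^n_t$ equals $\hat Y^n_t$ plus lagged terms (not $\hat Y^n_t/\mu$); the lagged terms are controlled via $\hat Y^n_s\ge\hat\fM^n_s$ and H\"older.

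There is, however, a genuine gap in your treatment of the exception set. You assert that $\Phi_\theta$ ``itself is bounded on $\{\bar{\vct Y}^n_{t-1}\in{\cal R}^n\}$'' because ${\cal R}^n$ is ``bounded on the scaled axis.'' It is not: the set that makes the drift inequality work is ${\cal R}_{\beta_n}=\{\vct y: y_i<\beta_n \text{ for some } i\}$, which caps only \emph{one} of the $K$ lagged coordinates from above and leaves the remaining coordinates, and all of $\bar{\vct Z}^n_{t-1}$, unconstrained; $\Phi_\theta$ is unbounded there. Nor can you shrink ${\cal R}^n$ to a truly bounded set, because the contraction outside ${\cal R}^n$ (via the decomposition~(\ref{eq:2expDecomp})) needs \emph{all} $K$ lagged values of $\hat Y^n$ to exceed $\beta_n$ so that the positive parts in Lemma~\ref{lemma:Yevo} disappear. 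Proving $\limsup_n\expect_{\pi_n}[\Phi_\theta\,\ind{\bar{\vct Y}^n_{t-1}\in{\cal R}_{\beta_n}}]<\infty$ is actually one of the delicate points of the whole argument: the paper partitions ${\cal R}_{\beta_n}$ into the sets ${\cal R}^k_{\beta_n}$, unrolls the recursion of Lemma~\ref{lemma:Yevo}, and invokes the lower bound of Corollary~\ref{coro:Ybound}(ii) together with the number-theoretic Lemma~\ref{lemma:number} (this is where the hypothesis $p_ip_j>0$ for relatively prime $i\neq j$ enters) to dominate every coordinate of $\bar{\vct Y}^n_t$ by a linear combination of the $|\hat\fM^n_{t-i}|$, whose exponential moments are then uniformly controlled by~(\ref{eq:arrival-Aexp}) and Lemma~\ref{lemma:Dbound}. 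Without supplying this step, your argument does not close.
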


\begin{proof}
See Section~\ref{sec:thm3}.
\end{proof}

We conjecture that the threshold value of $\theta$ in Theorem~\ref{theorem:MainResultExpErg} is given by $\theta^*$ (defined in Theorem~\ref{theorem:MainResultDecayRate}). We remark that the criticality of the exponent $\theta^* = 2\beta/(c_a^2+c_s^2)$ is consistent with the results obtained earlier
in~\cite{HaW81,JMM04}. Namely, in the GI/M/n queue in the QED regime
the conditional limited scaled steady-state number of customers is
exponentially distributed~\cite{HaW81}
\[
x^{-1} \log \Pr[\hat Q > x | \hat Q > 0] = -2\beta/(c_a^2+1),
\]
$x>0$, while for the QED GI/D/n queue~\cite{JMM04} one has, as $x\to\infty$,
\[
x^{-1} \log \Pr[\hat Q > x | \hat Q > 0] \to -2\beta/c_a^2;
\]
recall that in both cases $\Pr[\hat Q>0] \in (0,1)$ for every
$\beta>0$. Furthermore,  we point out that the same exponent $\theta^*$
appears in the Kingman approximation~\cite{Kin61,Kin64} for a
single-server queue in the conventional heavy-traffic regime.
Moreover, the same exponent was established in analyses
of queues with a fixed number of servers in the same heavy-traffic regime.

In particular, consider a sequence of {\em single}-server queues indexed by $n$. The arrival
rate to the the $n$th system is $\lambda_n \to \infty$, with the
arrival process being renewal, satisfying the Central Limit Theorem and $c_{a,n} \to c_a$ as $n\to\infty$.
The service times of customers are i.i.d. and equal in distribution
to $S/n$ (equivalently, the service capacity grows linearly in $n$), and, thus, the utilization is given by $\rho_n =
\lambda_n/(n\mu)$. Let $\tilde Q^n$ be the steady-state number of
customers {\em awaiting service} in the system indexed by $n$; $\tilde Q^n$ and the total
number of customers in the system differ by at most one at any point
in time. If $\sqrt{n}(1-\rho_n) \to \beta>0$, as $n\to\infty$, then $\tilde Q^n/\sqrt{n} \Rightarrow \tilde Q$, as
$n\to\infty$, where $\tilde Q$ is exponentially
distributed~\cite[Sect.~9.6]{Whi02c} (see
also~\cite[Sect.~5.7]{Whi02c}):
\[
x^{-1} \log \Pr[\tilde Q > x] = -\theta^*,
\]
$x>0$, where $\theta^*$ is as in
Theorem~\ref{theorem:MainResultDecayRate}. The agreement of the critical exponent $\theta^*$ in the corresponding single- and $n$-server ($n\to\infty$)
systems is interesting since the two evolve under different rules.
Observe that, as $n\to\infty$, the total number of customers in the
single-server system is $\Theta(\sqrt{n})$, as $n\to\infty$, while for the $n$-server system that quantity is $\Theta(n)$, as $n\to\infty$.

In conclusion of this section we obtain an analogue of Theorem~\ref{theorem:MainResultDecayRate} for waiting times using the Distributional Little's Law~\cite{HNe71} applied to the waiting room. In the case of process-level (transient) analysis a result of~\cite{Puh94} (see also~\cite[Lemma~A.2]{PuR00}) is utilized typically to make a ``translation" between
the queue length and waiting time processes. Here we provide a simple independent proof for the stationary waiting time~$W^n$.

\begin{coro}\label{coro:Waiting}
If $\hat W\triangleq\mu^{-1}\hat Q$ then $\sqrt{n} W^n \Rightarrow \hat W$ as $n\to\infty$. Consequently,
$\expect e^{\theta \hat W}<\infty$ if $\theta<\mu\theta^*$ and $\expect e^{\theta \hat W}=\infty$ if $\theta>\mu\theta^*$.
\end{coro}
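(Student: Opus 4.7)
The plan is to combine the Haji--Newell Distributional Little's Law (DLL)~\cite{HNe71} applied to the waiting room with the limit $Q^n/\sqrt{n}\Rightarrow \hat Q$ of Theorem~\ref{theorem:MainResultTightness}, and then read off the moment generating function statement algebraically from Theorem~\ref{theorem:MainResultDecayRate}. Let $N^n(t)$ denote the number of arrivals of the $n$th system in a time interval of length $t$ started at an appropriate stationary reference instant. Since the discipline is FCFS, the arrival process is stationary renewal, and $\pi_n$ is the stationary law, the DLL yields the distributional identity
\[
Q^n \stackrel{d}{=} N^n(W^n).
\]

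The second step decomposes
\[
\frac{N^n(W^n)}{\sqrt n} \;=\; \frac{\lambda_n}{n}\cdot \sqrt n\, W^n \;+\; \frac{N^n(W^n) - \lambda_n W^n}{\sqrt n}.
\]
The coefficient $\lambda_n/n \to \mu$ by the Halfin--Whitt scaling. Tightness of $\{\sqrt n\, W^n\}$ follows from a short contradiction argument: if $\sqrt n\, W^n \to \infty$ along a subsequence, then $\lambda_n W^n/\sqrt n \to \infty$, forcing $N^n(W^n)/\sqrt n \to \infty$ in probability and contradicting the identity above together with $Q^n/\sqrt n\Rightarrow \hat Q$ with $\hat Q$ almost surely finite. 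Tightness of $\sqrt n\, W^n$ implies in particular $W^n \to 0$ in probability. Using the uniform CLT-type assumptions (\ref{eq:arrival-Adistr})--(\ref{eq:arrival-A2}), the conditional variance of the centered term satisfies
\[
\mathrm{Var}\!\left[\frac{N^n(W^n)-\lambda_n W^n}{\sqrt n}\,\Bigg|\, W^n\right] = O\!\left(\frac{\lambda_n W^n c_a^2}{n}\right) = O\!\left(c_a^2\, W^n\right) \to 0 \text{ in probability},
\]
so this term is $o_P(1)$. Slutsky's theorem then gives $\mu\sqrt n\, W^n \Rightarrow \hat Q$, i.e., $\sqrt n\, W^n \Rightarrow \hat Q/\mu = \hat W$.

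The moment generating function claim follows immediately: since $\hat W = \hat Q/\mu$ by definition, $\E e^{\theta \hat W} = \E e^{(\theta/\mu)\hat Q}$, which by Theorem~\ref{theorem:MainResultDecayRate} is finite if $\theta/\mu < \theta^*$ and infinite if $\theta/\mu > \theta^*$, equivalently if $\theta<\mu\theta^*$ or $\theta>\mu\theta^*$. The main technical obstacle is controlling the centered fluctuation $N^n(W^n)-\lambda_n W^n$ uniformly in the random window $W^n$: one must transfer the unit-window CLT-type assumptions (\ref{eq:arrival-Adistr})--(\ref{eq:arrival-A2}) to the shrinking random window $W^n = O_P(1/\sqrt n)$, which is routine but requires care at the boundary contribution from the backward recurrence time. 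A secondary point is verifying that the DLL of~\cite{HNe71} applies verbatim to the waiting-room process of a multi-server FCFS queue under general stationary renewal arrivals.
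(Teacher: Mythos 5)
Your overall strategy rests on the same two pillars as the paper's proof: the Distributional Little's Law identity $Q^n \stackrel{d}{=} A^n_{0,W^n}$ and a law of large numbers for the arrival count over an $O(1/\sqrt{n})$ window; and your reduction of the moment generating function claim to Theorem~\ref{theorem:MainResultDecayRate} via $\hat W=\mu^{-1}\hat Q$ is exactly the paper's. Where you diverge is in how the random window $W^n$ is handled. The paper never has to control fluctuations over a random window: under the DLL coupling, monotonicity of the counting process gives $\{W^n\le x\}\subseteq\{Q^n\le A^n_{0,x}\}$ and $\{W^n>x\}\subseteq\{Q^n\ge A^n_{0,x}\}$, so it suffices to prove $A^n_{0,x/\sqrt{n}}/\sqrt{n}\to\mu x$ in probability for each \emph{fixed} $x$ --- a short Chebyshev bound on sums of interarrival times, as in~(\ref{eq:arrival-AdistrScaleRootn}) --- and then sandwich $\pr[\sqrt{n}W^n\le x]$ between matching $\limsup$ and $\liminf$ bounds equal to $\pr[\hat Q\le\mu x]$, invoking continuity of the law of $\hat Q$ on $(0,\infty)$. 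This buys a proof that only ever evaluates the arrival process at deterministic times.

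The gap in your version is the step asserting $\mathrm{Var}\bigl[(N^n(W^n)-\lambda_n W^n)/\sqrt{n}\,\big|\,W^n\bigr]=O(\lambda_n W^n c_a^2/n)$. First, the hypotheses (\ref{eq:arrival-Adistr})--(\ref{eq:arrival-A2}) concern unit-length windows only, and the exact variance of a renewal count over a window of length $t$ is not $\lambda t c_a^2$; that is only the $t\to\infty$ asymptotic, and nothing in the stated assumptions yields a variance bound for sub-unit windows. Second, even granting a pointwise-in-$t$ bound, evaluating at the random time $W^n$ requires either that $W^n$ be independent of the subsequent arrival increments in the specific DLL coupling (true for the Palm version with renewal input, but the paper works with the time-stationary version and must treat the equilibrium first interarrival time separately), or a maximal inequality for $\sup_{0\le t\le x/\sqrt{n}}|A^n_{0,t}-\lambda_n t|$; a naive union bound over a grid of mesh $o(1/\lambda_n)$ does not close, though Kolmogorov's inequality applied to the centered interarrival sums does. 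Your tightness-by-contradiction step leans on the same shrinking-window law of large numbers, so it is fine once that is in place. The most economical repair is to drop the Slutsky decomposition altogether and use the paper's monotonicity sandwich with deterministic windows.
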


\begin{proof}
See Section~\ref{sec:CorollaryWaiting}.
\end{proof}

\section{Preliminary results} \label{sec:prelim}
The section contains 4 subsections. In the first subsection we consider a time-embedded version of $\{(\hat Q^n_t, {\vct L}^n_t), t \in\rR_+\}$. The number of customers in the finite-$n$ and limiting systems is considered in the second subsection. Quadratic and geometric Lyaponov functions are introduced and analyzed in the last two subsections.

\subsection{Time-embedded process} \label{sec:tep}
In  this section we examine the  triple $(Q^n_t, {\vct L}^n_t, a_t)
\in \rZ_+^{K+1} \times \rR_+$ and the laws governing its evolution
in time. The process $\{(Q^n_t, {\vct  L}^n_t, a^n_t),\,
t\in\rR_+\}$ is not Markovian due to the non-exponential nature of
service times. Hence, in order to avoid enlarging the state space,
we consider its time-embedded version $\{(Q^n_t, {\vct  L}^n_t,
a^n_t),\, t\in\rZ_+\}$, i.e., the original process observed at
discrete-time instances $t \in\rZ_+$ (recall from Section~\ref{sec:GGN} that $S\in\rN$). As seen in the following
proposition, the evolution of the later process is determined by the
number of arrivals ($A^n_t$) and customers that enter service ($\vct \D^n_t$) during a unit time interval.

\begin{prop}\label{prop:MC}
 The process $\{(Q^n_t,{\vct  L}^n_t,a^n_t), \, t \in \rZ_+\}$ is a Markov chain.
For every $t \in \rZ_+$ the value of $(Q^n_{t+1},{\vct L}^n_{t+1})$ satisfies
\begin{align}
{\vct L}^n_{t+1} &= {\cal T}\{{\vct L}^n_t\} + {\vct \D}^n_{t+1}, \label{eq:L} \\
Q^n_{t+1} &= \left(Q^n_t + A^n_{t+1} + \| {\vct L}^n_{t}\| - n - L^n_{t,1}\right)^+, \label{eq:Q}
\end{align}
where ${\vct \D}^n_{t+1} \in \rZ_+^K$ is a multinomially distributed random vector that obeys
\begin{equation}
\|{\vct  \D}^n_{t+1}\| = \left(Q^n_t+A^n_{t+1}\right) \wedge (n - \| L^n_{t,k}\| + L^n_{t,1}) \label{eq:D}
\end{equation}
and
\begin{equation}
\expect [{\vct \D}^n_{t+1} \, |\, \|{\vct \D}^n_{t+1}\|] = \|{\vct \D}^n_{t+1}\| \, {\vct p}; \label{eq:D2}
\end{equation}
given $\|{\vct \D}^n_{t+1}\|$ vector ${\vct J}^n_{t+1}$ is
conditionally independent of $\{(Q^n_t,{\vct  L}^n_t,a^n_t), \, t
\in \rZ_+\}$.
\end{prop}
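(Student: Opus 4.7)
The plan is to analyze the bookkeeping of customers, servers and arrivals over the unit interval $(t,t+1]$, and then to derive the Markov property from the renewal structure of arrivals together with the i.i.d.\ nature of service times. First I would classify the events that can occur during $(t,t+1]$: (a) arrivals, forming a point process whose conditional law given the past depends only on the backward recurrence time $a^n_t$ by the renewal property; (b) service completions, which coincide exactly with the $L^n_{t,1}$ customers whose remaining times at $t$ lie in $(0,1]$ --- no customer of class $k\ge 2$ at $t$ can finish within a unit interval, and no customer entering service inside $(t,t+1]$ with service requirement $k\ge 1$ can finish within it either, since such a customer entering at $s\in(t,t+1]$ departs at $s+k>t+1$; (c) new service assignments, made FCFS as servers become free.

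From (b) and (c) the recursion (\ref{eq:L}) is immediate: a class-$k$ customer at time $t$ with $k\ge 2$ becomes a class-$(k-1)$ customer at $t+1$ (the action of $\cal T$), a class-$1$ customer at $t$ leaves, and a customer entering service with service requirement $k$ during the interval lands in class $k$ at $t+1$, contributing $\D^n_{t+1,k}$. For the count $\|\vct \D^n_{t+1}\|$, total demand during the interval is $Q^n_t+A^n_{t+1}$ while the total number of server-starts available is the number of servers idle at $t$ plus the number freed up during the interval, namely $(n-\|\vct L^n_t\|)+L^n_{t,1}$. Because, by (b), no server can recycle within the interval, FCFS saturates capacity whenever demand exceeds it and otherwise serves all demand; this case split yields (\ref{eq:D}). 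Equation (\ref{eq:Q}) then follows either from the same case split or from the conservation identity $Q^n_{t+1}=Q^n_t+A^n_{t+1}-\|\vct \D^n_{t+1}\|$.

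For (\ref{eq:D2}) and the conditional-independence claim, the key observation is that each customer's service time is an independent $\vct p$-distributed mark and FCFS selects which customers enter service based only on arrival order and server-availability times, not on these marks. Consequently, conditional on $\|\vct \D^n_{t+1}\|$ the vector $\vct \D^n_{t+1}$ is a multinomial$(\|\vct \D^n_{t+1}\|,\vct p)$ sample that is independent of $(Q^n_t,\vct L^n_t,a^n_t)$ and of the arrival history. The Markov property then follows by assembling the pieces: given $(Q^n_t,\vct L^n_t,a^n_t)$, the conditional law of arrivals in $(t,t+1]$ depends on the past only through $a^n_t$, which determines both $A^n_{t+1}$ and $a^n_{t+1}$; the service-type marks form an independent multinomial; and $(Q^n_{t+1},\vct L^n_{t+1})$ is a measurable function of these ingredients via (\ref{eq:L})--(\ref{eq:D}). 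I expect the main obstacle to be the no-recycling argument used in (b) and the resulting case split for (\ref{eq:D}): one must verify that FCFS together with the integer-valued service times really does allow the continuum of events inside $(t,t+1]$ to be collapsed into the clean pair $(A^n_{t+1},\vct \D^n_{t+1})$, justifying the simple capacity bound $(n-\|\vct L^n_t\|)+L^n_{t,1}$.
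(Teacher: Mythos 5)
Your proposal is correct and follows essentially the same route as the paper: counting the freed servers ($L^n_{t,1}$ plus the idle ones) against the demand $Q^n_t+A^n_{t+1}$ to get (\ref{eq:D}), classifying customers at $t+1$ by whether they were already in service at $t$ to get (\ref{eq:L}), and invoking the i.i.d.\ service marks and the renewal structure for (\ref{eq:D2}) and the Markov property. Your explicit ``no server recycling within a unit interval'' observation is a point the paper leaves implicit, and it is a correct and worthwhile justification of the capacity bound $n-\|\vct L^n_t\|+L^n_{t,1}$.
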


\begin{proof} It is sufficient to demonstrate (\ref{eq:L}), (\ref{eq:Q}) and (\ref{eq:D}); equality~(\ref{eq:D2}) is a straightforward consequence of the i.i.d. nature of service times. The Markov property follows from these relationships and the renewal structure
of the arrival process.

Consider the number of customers that enter service in the time
interval $(t,t+1]$. At time~$t$ there are $\|{\vct  L}^n_t\|$
customers in service by the definition of ${\vct  L}^n_t$. Out of
these $\|{\vct L}^n_t\|$ customers, $L^n_{t,1}$ depart from the
system not later than time $(t+1)$ since their residual service
requirements at time $t$ are at most~1 (by the definition of
$L^n_{t,1}$). This yields that $n - \|\vct L^n_t\|  + L^n_{t,1}$
customers can potentially enter service in the time interval
$(t,t+1]$; recall that $n - \|{\vct L}^n_t\| $ is the number of
idle servers at time $t$. On the other hand, the number of customers
that can enter service in $(t,t+1]$ is at most $Q^n_t+A^n_{t+1}$.
Thus, the number of customers that enter service in $(t,t+1]$ is
\begin{equation}
\|\vct \D^n_{t+1}\|=(Q^n_t+A^n_{t+1}) \wedge (n - \|{\vct L}^n_t\|  + L^n_{t,1} ), \label{eq:enterS}
\end{equation}
rendering (\ref{eq:D}). Now, customers in service at time $t+1$
with residual service requirements in $(i-1,i]$ are of two types:
(i) customers already in service at time $t$, and (ii) customers that
do enter service in $(t,t+1]$. Thus, formally
\begin{equation}
L^n_{t+1,i} =
\begin{cases}
L^n_{t,i+1} + \D^n_{t+1,i}, & i=1,\ldots,K-1,\\
\D^n_{t+1,i}, & i=K.
\end{cases} \label{eq:vctL}
\end{equation}
The multinomial distribution of ${\vct \D}^n_{t+1}$ follows from the
assumption that customers' service requirements are i.i.d. r.v.s, independent from the arrival processes.  Rewriting~(\ref{eq:vctL}) in a vector form renders~(\ref{eq:L}).

In order to establish the value of $Q^n_{t+1}$,  it is sufficient to
consider the difference between the number of customers that could
start receiving service in the time interval $(t,t+1]$ and the
actual number of customers that enter service,
i.e.,~(\ref{eq:queue-idle3}) and (\ref{eq:enterS}) yield
\begin{align*}
Q^n_{t+1} &= (Q^n_t+A^n_{t+1}) - \|\vct J^n_{t+1} \| \\
&= (Q^n_t + A^n_{t+1} + \|{\vct L}^n_t\| - L^n_{t,1} - n)^+,
\end{align*}
and (\ref{eq:Q}) holds. This concludes the proof.
\end{proof}

An analogue of Proposition~\ref{prop:MC} for scaled processes is stated next.

\begin{coro}\label{coro:MCscaled}
The process $\{(\hat Q^n_t, \hat {\vct  L}^n_t, a^n_t), \, t \in \rZ_+\}$ is a Markov chain and it satisfies
\begin{align}
\hat {\vct  L}^n_{t+1} &= {\cal T}\{\hat {\vct  L}^n_t\}
+ \hat {\vct \D}^n_{t+1}+\hat \D^n_{t+1}{\vct p}, \label{eq:Lnhat} \\
\hat Q^n_{t+1} &= \left(\hat Q^n_t + \hat A^n_{t+1} + \sum_{i=2}^K \hat L^n_{t,i} - \beta_n \right)^+, \nonumber \\ 
\hat \D^n_{t+1} &= \left(\hat Q^n_t+\hat A^n_{t+1}\right) \wedge \left(\beta_n - \sum_{i=2}^K \hat L^n_{t,i} \right), \nonumber 
\end{align}
where $\hat{\vct \D}^n_{t+1}$ conditional on $\hat \D^n_{t+1}$ is independent of $\{(\hat Q^n_t, \hat {\vct  L}^n_t, a^n_t), \, t \in \rZ_+\}$.
\end{coro}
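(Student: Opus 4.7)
The plan is to derive the corollary by routine substitution: Proposition~\ref{prop:MC} already supplies the unscaled recursions for $(Q^n_t,\vct L^n_t)$ and the conditional structure of $\vct \D^n_{t+1}$, so the task reduces to applying the centering/scaling definitions in (\ref{eq:hatD})--(\ref{eq:hatY}) to each equation and invoking the appropriate bookkeeping identities. The Markov property comes for free: since each scaled quantity is an affine, bijective function of its unscaled counterpart (with deterministic constants depending only on $n$ and $\lambda_n$), the scaled triple $(\hat Q^n_t,\hat{\vct L}^n_t,a^n_t)$ inherits Markovianity from Proposition~\ref{prop:MC}. Similarly, the stated conditional independence of $\hat{\vct \D}^n_{t+1}$ given $\hat \D^n_{t+1}$ is just a restatement of the conditional independence of $\vct \D^n_{t+1}$ given $\|\vct \D^n_{t+1}\|$ in Proposition~\ref{prop:MC}.

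For the vector recursion~(\ref{eq:Lnhat}), the key identity is $\tilde p_i - \tilde p_{i+1} = p_i$, equivalently ${\cal T}\{\tilde{\vct p}\} = \tilde{\vct p} - \vct p$. Subtracting $\lambda_n\tilde{\vct p}$ from both sides of (\ref{eq:L}), applying $\cal T$ linearly, and regrouping, the term $\vct\D^n_{t+1}-\lambda_n\vct p$ appears, which I split as
\[
\vct\D^n_{t+1}-\lambda_n\vct p = \bigl(\vct\D^n_{t+1}-\|\vct\D^n_{t+1}\|\vct p\bigr) + \bigl(\|\vct\D^n_{t+1}\|-\lambda_n\bigr)\vct p = \sqrt n\,\hat{\vct\D}^n_{t+1} + \sqrt n\,\hat\D^n_{t+1}\vct p.
\]
Dividing by $\sqrt n$ yields (\ref{eq:Lnhat}).

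For the scalar recursion for $\hat Q^n_{t+1}$, I would divide (\ref{eq:Q}) by $\sqrt n$ and rewrite each summand using the identities $\tilde p_1=1$ and $\|\tilde{\vct p}\|=\sum_i \tilde p_i = \expect S = 1/\mu$, so that $\|\vct L^n_t\|/\sqrt n = \sum_i \hat L^n_{t,i} + \lambda_n/(\mu\sqrt n)$ and $L^n_{t,1}/\sqrt n = \hat L^n_{t,1} + \lambda_n/\sqrt n$. The leftover deterministic terms collapse to $\lambda_n/(\mu\sqrt n) - \sqrt n = -\beta_n$, matching the second displayed equation. The equation for $\hat\D^n_{t+1}$ is handled identically, subtracting $\lambda_n$ and dividing by $\sqrt n$ inside both arguments of the $\wedge$ operator in (\ref{eq:D}).

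There is no real obstacle here; the only care needed is to keep track of the combinatorial identities $\tilde p_1=1$, $\|\tilde{\vct p}\|=1/\mu$, and ${\cal T}\{\tilde{\vct p}\} = \tilde{\vct p} - \vct p$, together with the definition $\beta_n = (n-\lambda_n/\mu)/\sqrt n$ used to absorb the deterministic offsets. The entire proof is effectively a one-page algebraic verification that I would present compactly, equation by equation.
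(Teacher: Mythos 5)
Your proposal is correct and follows essentially the same route as the paper's proof: the Markov property via the affine bijection with Proposition~\ref{prop:MC}, the three-term split of ${\vct \D}^n_{t+1}-\lambda_n{\vct p}$ together with the identity ${\cal T}\{\tilde{\vct p}\}=\tilde{\vct p}-{\vct p}$ for (\ref{eq:Lnhat}), and direct substitution using $\tilde p_1=1$, $\|\tilde{\vct p}\|=1/\mu$ and $\beta_n=(n-\lambda_n/\mu)/\sqrt{n}$ for the other two recursions. The paper presents exactly this argument (stating the last two equations as ``obtained similarly''), so no changes are needed.
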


\begin{proof} The Markov property follows from Proposition~\ref{prop:MC} and the fact that there
exists a one-to-one mapping between $(\hat Q^n_t,\hat {\vct  L}^n_t)$ and $(Q^n_t,{\vct  L}^n_t)$.
Now, (\ref{eq:L}) implies
\begin{align*}
{\vct  L}^n_{t+1} - \lambda_n \tilde{\vct p} &= {\cal T}\{{\vct  L}^n_t\} + {\vct \D}^n_{t+1 } - \lambda_n \tilde{\vct p}\\
&= \left( {\cal T}\{{\vct L}^n_t\} - \lambda_n (\tilde{\vct p}-\vct p) \right) + \left({\vct \D}^n_{t+1} -\|{\vct \D}^n_{t+1}\| {\vct p}\right) + \left(\|{\vct \D}^n_{t+1} \| - \lambda_n \right) {\vct p}.
\end{align*}
This equality and the observation ${\cal T}\{{\vct  L}^n_t-\lambda_n \tilde{\vct p}\}={\cal T}\{{\vct  L}^n_t\} - \lambda_n (\tilde{\vct p}-\vct p)$ (due to the definition of~$\tilde {\vct p}$) yield~(\ref{eq:Lnhat}). The remaining relationships are obtained similarly from their counterparts (\ref{eq:Q}) and~(\ref{eq:D}).
\end{proof}

Properties of the vector $\hat {\vct J}^n_t$ are summarized in the following lemma.

\begin{lemma}\label{lemma:Dbound} Vector $\hat{\vct \D}^n_t$ satisfies for every $k=0,1,\ldots,n$
and $\theta>0$
\begin{align*}
\expect \left[\left( {\vct K} \cdot \hat {\vct \D}^n_t \right)^2 \,\bigg|\, \| {\vct \D}^n_t \|
= k \right] &= k\sigma^2_s/n, \\
\expect \left[\left( \hat \D^n_{t,j} \right)^2 \,\bigg|\, \| {\vct \D}^n_t \|
= k \right] &= k p_j(1-p_j)/n, \\
\expect \left[\exp\left\{\theta {\vct K} \cdot \hat {\vct \D}^n_t \right\} \,\bigg|\, \| {\vct \D}^n_t \|
= k \right] &= \left(\expect e^{\theta \frac{S - 1/\mu}{\sqrt{n}}} \right)^k \\
&\le \left(\expect e^{\theta \frac{S - 1/\mu}{\sqrt{n}}} \right)^n.
\end{align*}
\end{lemma}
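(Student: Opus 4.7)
The plan is to exploit the multinomial structure of $\vct \D^n_t$ conditional on $\|\vct \D^n_t\|=k$ (as established in Proposition~\ref{prop:MC}) via the standard i.i.d. representation: given $\|\vct \D^n_t\|=k$, one may write $\D^n_{t,j} = \sum_{\ell=1}^k \ind{S_\ell = j}$, where $S_1,\ldots,S_k$ are i.i.d. copies of the service time $S$, so that $\sum_{j=1}^K j\,\D^n_{t,j} = \sum_{\ell=1}^k S_\ell$. With this, the three claimed identities reduce to routine moment computations for sums of i.i.d. random variables.

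For the first identity, I would write
\[
{\vct K}\cdot \hat{\vct \D}^n_t = \frac{1}{\sqrt n}\Bigl(\sum_{j=1}^K j\,\D^n_{t,j} - k\sum_{j=1}^K jp_j\Bigr) = \frac{1}{\sqrt n}\sum_{\ell=1}^k\bigl(S_\ell - 1/\mu\bigr),
\]
whose conditional mean is zero and whose conditional variance is $k\sigma_s^2/n$ by independence of the $S_\ell$. For the second identity, $\D^n_{t,j}$ is Binomial$(k,p_j)$ conditional on $\|\vct \D^n_t\|=k$, so $(\hat \D^n_{t,j})^2 = (\D^n_{t,j} - kp_j)^2/n$ has conditional expectation $kp_j(1-p_j)/n$. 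For the third identity, the i.i.d.\ representation and independence give
\[
\expect\!\left[e^{\theta\,{\vct K}\cdot\hat{\vct \D}^n_t}\,\Big|\,\|\vct \D^n_t\|=k\right]
= \expect\!\left[\exp\!\Bigl\{\tfrac{\theta}{\sqrt n}\sum_{\ell=1}^k(S_\ell-1/\mu)\Bigr\}\right]
= \bigl(\expect e^{\theta(S-1/\mu)/\sqrt n}\bigr)^k.
\]

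For the final inequality, since $\expect[S-1/\mu]=0$, Jensen's inequality gives $\expect e^{\theta(S-1/\mu)/\sqrt n}\geq 1$, so the $k$th power is bounded above by the $n$th power for $k\leq n$. There is no serious obstacle here; the only care needed is to cite Proposition~\ref{prop:MC} (and specifically \eqref{eq:D2} together with the i.i.d.\ nature of service requirements) to justify the multinomial representation, and to note that the bound $k\leq n$ used in the last step is part of the implicit range of $k$.
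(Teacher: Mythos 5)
Your proof is correct and follows essentially the same route as the paper: the paper likewise introduces i.i.d.\ copies $\{S_i\}_{i=1}^k$ of $S$, identifies ${\vct K}\cdot\hat{\vct\D}^n_t$ with $\sum_{i=1}^k (S_i-1/\mu)/\sqrt{n}$, and obtains the final inequality from $\expect e^{\theta(S-1/\mu)/\sqrt{n}}\ge 1$ via Jensen. You have merely written out the "similar straightforward fashion" details (the binomial marginal and the MGF factorization) that the paper leaves implicit.
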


\begin{proof} Let $\{S_i\}_{i=1}^k$ be a sequence of i.i.d. r.v.s equal in distribution to $S$. Then, the definition of $\hat {\vct \D}^n_t$ renders
\begin{align*}
\expect \left[\left( {\vct K} \cdot \hat {\vct \D}^n_t \right)^2  \,\bigg|\, \| {\vct \D}^n_t \| = k \right] &=
\expect \left(\sum_{i=1}^k \frac{S_i - 1/\mu}{\sqrt{n}} \right)^2 \\
&= k \sigma^2_s/n.
\end{align*}
The other two equalities are obtained in a similar straightforward fashion. The inequality is due to $\expect e^{\theta (S-1/\mu)/\sqrt{n}} \geq 1$. This follows from the convexity of $e^{\theta (x-1/\mu)/\sqrt{n}}$ in $x$ and Jensen's inequality.
\end{proof}

\subsection{Number in system}\label{section:NumberSystem}
This section is devoted to the detailed analysis of the rescaled number of customers in the system $\{\hat Y^n_t, t\in\rZ_+\}$ and its limiting counterpart. The dynamics of $\{\hat Y^n_t, t\in\rZ_+\}$ is related to a newly introduced process
\begin{equation}
\hat{\vct  Z}^n_t = (\hat Z^n_{t,1}, \ldots, \hat Z^n_{t,K}) \triangleq \hat {\vct  \D}^n_t + \vct  p \hat A^n_t \label{eq:defZ}
\end{equation}
and in particular to
\begin{align}
\hat \fM^n_t &\triangleq \sum_{i=1}^K \sum_{j=i}^K (\hat \D^n_{t+1-i,j} + p_j \hat A^n_{t+1-i}), \label{eq:hatMt} \\
&=\sum_{i=1}^K \sum_{j=i}^K \hat Z^n_{t+1-i,j}, \label{eq:MZ}
\end{align}
as stated in the next lemma. Informally, for large $n$, process $\{\hat\fM^n_t, t\in\rZ_+\}$ serves as a proxy for a scaled infinite-server process. We remark that the lemma is a discrete-time analogue of~(1.1) in~\cite{Ree07}.

\begin{lemma} \label{lemma:Yevo}
The process $\{\hat Y^n_t, t \in \rZ_+\}$ satisfies for all $t \geq K$
\[
\hat Y^n_t = \hat \fM^n_t + \sum_{i=1}^K p_i (\hat Y^n_{t-i} - \beta_n)^+.
\]
\end{lemma}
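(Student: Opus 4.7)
The statement is a purely algebraic identity that follows from unfolding the $\hat{\vct L}^n$ recursion over $K$ time steps and then using the queue dynamics to rewrite the scalar $\hat \D^n$ increments in terms of arrivals and queue lengths. The plan is to use only (i) the recursion for $\hat{\vct L}^n$ in Corollary~\ref{coro:MCscaled}, (ii) the scalar dynamics $\hat \D^n_s = \hat Q^n_{s-1} + \hat A^n_s - \hat Q^n_s$ obtained from~(\ref{eq:scaledDynamics}), and (iii) the elementary identities $\tilde p_1 = 1$, $\tilde p_K = p_K$, and $\tilde p_i - \tilde p_{i+1} = p_i$.

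First I would iterate the vector recursion $\hat{\vct L}^n_{s+1} = \mathcal{T}\{\hat{\vct L}^n_s\} + \hat{\vct \D}^n_{s+1} + \hat \D^n_{s+1}\vct p$ exactly $K$ times, starting from $s = t-1$. Because $\mathcal{T}^K$ is the zero operator on $\rR^K$, for $t \geq K$ the initial contribution $\mathcal{T}^K\{\hat{\vct L}^n_{t-K}\}$ drops out, leaving a closed form for each coordinate $\hat L^n_{t,k}$ as a double sum of $\hat \D^n_{t-i,\,k+i}$ and $p_{k+i}\hat \D^n_{t-i}$ over $i = 0,\ldots,K-k$. Summing in $k$ and swapping the order of summation via the substitution $j = k+i$ produces
\[
\sum_{k=1}^K \hat L^n_{t,k} = \sum_{i=0}^{K-1}\sum_{j=i+1}^K \hat \D^n_{t-i,\,j} + \sum_{i=0}^{K-1} \tilde p_{i+1}\,\hat \D^n_{t-i}.
\]

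Next I would eliminate the scalar $\hat \D^n_{t-i}$ terms by substituting $\hat \D^n_s = \hat Q^n_{s-1} + \hat A^n_s - \hat Q^n_s$ into the second sum and applying Abel summation to the resulting telescoping combination. Using the three arithmetic identities above --- with $\tilde p_1 = 1$ peeling off the boundary term $-\hat Q^n_t$ and $\tilde p_K = p_K$ merging $\tilde p_K \hat Q^n_{t-K}$ cleanly into $\sum_{i=1}^{K-1} p_i \hat Q^n_{t-i}$ --- this collapses to
\[
\sum_{i=0}^{K-1} \tilde p_{i+1}\,\hat \D^n_{t-i} = \sum_{i=1}^K \tilde p_i\,\hat A^n_{t+1-i} + \sum_{i=1}^K p_i\,\hat Q^n_{t-i} - \hat Q^n_t.
\]
Substituting into $\hat Y^n_t = \hat Q^n_t + \sum_k \hat L^n_{t,k}$, the $\pm \hat Q^n_t$ terms cancel, and the shift $i \mapsto i-1$ in the $\hat \D^n$ double sum aligns the indices with those in the definition $\hat \fM^n_t = \sum_{i=1}^K\sum_{j=i}^K(\hat \D^n_{t+1-i,j} + p_j \hat A^n_{t+1-i})$. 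What remains is precisely $\hat \fM^n_t + \sum_{i=1}^K p_i \hat Q^n_{t-i}$, and invoking $\hat Q^n_s = (\hat Y^n_s - \beta_n)^+$ from~(\ref{eq:Y=Q-beta}) yields the claim.

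The only obstacle is pure bookkeeping: keeping the three simultaneous index shifts $j = k+i$, $i \mapsto i-1$, and $j \mapsto j-1$ consistent, and confirming that the Abel boundary term combines cleanly via $\tilde p_K = p_K$. There is no analytic or probabilistic content, and the hypothesis $t \geq K$ is used only to guarantee that the $K$-fold application of $\mathcal{T}$ annihilates the initial vector $\hat{\vct L}^n_{t-K}$.
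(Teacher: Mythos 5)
Your proposal is correct and is essentially the paper's own argument: the authors also unfold the $\hat{\vct L}^n$ recursion over $K$ steps (coordinate by coordinate rather than via $\mathcal{T}^K=0$, but this is the same computation), substitute $\hat \D^n_s = \hat Q^n_{s-1} + \hat A^n_s - \hat Q^n_s$, telescope against $\tilde{\vct p}$, and finish with~(\ref{eq:hatY}) and~(\ref{eq:Y=Q-beta}). All of your intermediate identities check out, so there is nothing to add.
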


\begin{proof}
Equalities~(\ref{eq:Lnhat}) and~(\ref{eq:scaledDynamics}) yield an expression for the $K$th element of the vector $\hat {\vct  L}^n_{t+1}$:
\[
\hat {  L}^n_{t+1,K} = \hat \D^n_{t+1,K} + (\hat A^n_{t+1} + \hat Q^n_t - \hat Q^n_{t+1}) p_K.
\]
Furthermore, using~(\ref{eq:Lnhat}) iteratively it is straightforward to obtain the remaining elements of $\hat {\vct  L}^n_{t+1}$. To this end, for $j=0,\ldots,K-1$,
\[
\hat L^n_{t+1+j,K-j} = \sum_{i=0}^j \hat \D^n_{t+1+i,K-i} + \sum_{i=0}^j (\hat A^n_{t+1+i}
+ \hat Q^n_{t+i} - \hat Q^n_{t+1+i}) p_{K-i},
\]
which after a change of time indices renders, for $t \geq K$ and
$j=1,\ldots,K$,
\begin{equation}
\hat L^n_{t,j} = \sum_{i=1}^{K+1-j} \hat \D^n_{t+1-i,j+i-1} +
\sum_{i=1}^{K+1-j} (\hat A^n_{t+1-i} + \hat Q^n_{t-i} - \hat Q^n_{t-i+1}) p_{j+i-1}. \label{eq:Ljt}
\end{equation}
Summing both sides of~(\ref{eq:Ljt}) over $j=1,\ldots,K$ and using~(\ref{eq:hatMt}) results in
\begin{align*}
\sum_{j=1}^K \hat L^n_{t,j} &= \sum_{i=1}^K \sum_{j=i}^K \hat \D^n_{t+1-i,j} +
\sum_{i=1}^K  (\hat A^n_{t+1-i} + \hat Q^n_{t-i} - \hat Q^n_{t-i+1}) \tilde p_{i} \\
&= \hat \fM^n_t - \hat Q^n_t + \sum_{i=1}^K p_i \hat Q^n_{t-i}.
\end{align*}
The statement of the lemma follows from the preceding equality, (\ref{eq:hatY}) and~(\ref{eq:Y=Q-beta}).
\end{proof}

The following corollary establishes a lower and upper bound on
the value of $\hat Y^n_t$ in terms of the past values of $\{\hat
Y^n_t, t\in \rZ_+\}$ and the process $\{\hat \fM^n_t , t\in
\rZ_+\}$.

\begin{coro} \label{coro:Ybound} (i) For every $k \in \rZ_+$ and $t \geq k+K$
\begin{align*}
\hat Y^n_t \leq \sum_{i=0}^k (\hat \fM^n_{t-i})^+ + \sum_{i=k+1}^{k+K} \tilde p_{i-k}(\hat Y^n_{t-i}-\beta_n)^+.
\end{align*}
(ii) For $1 \leq i_1,\ldots,i_k \leq K$ let $p(k) = \prod_{j=1}^k p_{i_j}$ and $s(k) = \sum_{j=1}^k {i_j}$ with $s(0) = 0$.  Then for $t \geq s(k)$
\begin{equation*}
\hat Y^n_t  \geq p(k) \hat Y^n_{t-s(k)} - \sum_{j=0}^{k-1} (\beta - \hat \fM^n_{t-s(j)})^+.
\end{equation*}
\end{coro}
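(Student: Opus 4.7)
Both bounds will be established by induction on $k$, using Lemma~\ref{lemma:Yevo} (which, for $t\ge K$, reads $\hat Y^n_t = \hat \fM^n_t + \sum_{i=1}^K p_i(\hat Y^n_{t-i}-\beta_n)^+$) as the sole recursive input.

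For part (i), the base case $k=0$ is immediate from Lemma~\ref{lemma:Yevo} together with the estimates $\hat \fM^n_t \le (\hat \fM^n_t)^+$ and $p_i \le \tilde p_i$. For the step $k\to k+1$, I would single out the $i=k+1$ term in the inductive bound, whose coefficient is $\tilde p_1 = 1$, and bound
\[
(\hat Y^n_{t-k-1}-\beta_n)^+ \;\le\; (\hat \fM^n_{t-k-1})^+ + \sum_{j=1}^K p_j (\hat Y^n_{t-k-1-j}-\beta_n)^+,
\]
which follows by applying Lemma~\ref{lemma:Yevo} at time $t-k-1$ and taking the positive part (the right-hand side is already non-negative). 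Re-indexing by $i=k+1+j$ and using the telescoping identity $\tilde p_{j-1} = p_{j-1}+\tilde p_j$ collapses the coefficients at each $i\in\{k+2,\dots,k+K+1\}$ into $\tilde p_{i-(k+1)}$, and the $i=k+K+1$ endpoint matches $\tilde p_K = p_K$, which is exactly the $(k+1)$-th bound.

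For part (ii), the case $k=0$ is trivial (empty product and empty sum). I would next prove the $k=1$ case: keeping only the $i_1$ summand in Lemma~\ref{lemma:Yevo} and using $(x)^+ \ge x$,
\[
\hat Y^n_t \;\ge\; \hat \fM^n_t + p_{i_1}(\hat Y^n_{t-i_1}-\beta_n)^+ \;\ge\; p_{i_1}\hat Y^n_{t-i_1} - (p_{i_1}\beta_n - \hat \fM^n_t).
\]
The crucial observation is the elementary inequality $p_{i_1}\beta_n - \hat \fM^n_t \le (\beta_n - \hat \fM^n_t)^+$, which holds because $p_{i_1}\le 1$: the left-hand side is at most $\beta_n - \hat \fM^n_t$ whenever $\hat \fM^n_t<\beta_n$, and is non-positive otherwise. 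For the induction $k\to k+1$, apply the $k=1$ bound with index $i_{k+1}$ to $\hat Y^n_{t-s(k)}$, multiply through by $p(k)\in[0,1]$ (so that $p(k)(\beta_n - \hat \fM^n_{t-s(k)})^+ \le (\beta_n - \hat \fM^n_{t-s(k)})^+$), and combine with the inductive hypothesis; the new correction adds precisely the $j=k$ summand to the telescope.

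The main obstacle in both parts is spotting the one algebraic manipulation that lets the induction close cleanly: in (i), the identity $\tilde p_{j-1} = p_{j-1}+\tilde p_j$, which is exactly what licenses the recombination of coefficients after substituting Lemma~\ref{lemma:Yevo}; in (ii), the inequality $p_{i_1}\beta_n - \hat \fM^n_t \le (\beta_n - \hat \fM^n_t)^+$, which simultaneously absorbs the sign of $\beta_n - \hat \fM^n_t$ and the shrinkage factor $p_{i_1}$. I also note a minor cosmetic point: the proof naturally produces $\beta_n$ in the correction term of (ii), whereas the statement has $\beta$; I interpret this as shorthand justified by $\beta_n\to\beta$.
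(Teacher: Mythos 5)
Your proposal is correct and follows essentially the same route as the paper: both parts are proved by induction on $k$ from Lemma~\ref{lemma:Yevo}, with the telescoping identity $\tilde p_i = p_i + \tilde p_{i+1}$ (together with $\tilde p_i \le 1$) closing the induction in part (i), and the absorption of the factor $p(k)\le 1$ into $(\beta - \hat \fM^n_{t-s(k)})^+$ closing it in part (ii). Your remark about $\beta$ versus $\beta_n$ in part (ii) is apt — the paper's own proof of (ii) also silently writes $\beta$ where Lemma~\ref{lemma:Yevo} supplies $\beta_n$ — so this is a discrepancy inherited from the source, not a defect of your argument.
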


\begin{proof} The proofs of the two parts are by induction on $k$.

(i) The base of the induction ($k=0$) is due to Lemma~\ref{lemma:Yevo} and $\tilde p_i \geq p_i$ for $i=1,\ldots,K$.
Then the bound follows from the inductive assumption, Lemma~\ref{lemma:Yevo} and $\tilde p_i = p_i + \tilde p_{i+1} \leq 1$:
\begin{align*}
\hat Y^n_t &\leq \sum_{i=0}^k (\hat \fM^n_{t-i})^+ + (\hat Y^n_{t-k-1})^+ + \sum_{i=k+2}^{k+K} \tilde p_{i-k}(\hat Y^n_{t-i}-\beta_n)^+ \\
&\leq \sum_{i=0}^{k+1} (\hat \fM^n_{t-i})^+ + \sum_{i=k+2}^{k+1+K} \tilde p_{i-k-1}(\hat Y^n_{t-i}-\beta_n)^+.
\end{align*}

(ii)  By Lemma~\ref{lemma:Yevo} one has $\hat Y^n_t \geq \hat \fM^n_t + p_{i_1} (\hat Y^n_{t-i_1} - \beta)^+$ that implies
\begin{align}
\hat Y^n_t &\geq p_{i_1} \hat Y^n_{t-i_1} - (\beta - \hat \fM^n_t) \notag \\
&\ge p_{i_1} \hat Y^n_{t-i_1} - (\beta - \hat \fM^n_t)^+. \label{eq:coro3base}
\end{align}
The preceding inequality provides the base of the induction ($k=1$). Suppose
now that the statement of the corollary holds for some $k\ge 1$. Then
combining~(\ref{eq:coro3base}), the inductive assumption and $p_i \leq 1$
yields
\begin{align*}
\hat Y^n_t &\geq  p(k) \hat Y^n_{t-s(k)} - \sum_{j=0}^{k-1} (\beta - \hat \fM_{t-s(j)})^+ \\
&\geq p(k+1) \hat Y^n_{t-s(k+1)} - \sum_{j=0}^k (\beta - \hat \fM_{t-s(j)})^+,
\end{align*}
where the second inequality is also due to $s(k+1) = s(k) + i_{k+1}$.
\end{proof}

In the rest of the section we state the limiting counterparts of the results derived for $\{\hat Y^n_t, t\in\rZ_+\}$.
We start by introducing the limiting analogs of $\hat{\vct  Z}^n_t$ and $\hat \fM^n_t$. Define
\begin{equation}
\hat{\vct  Z}_t \triangleq \hat {\vct  \D}_t + \vct  p \hat A_t \label{eq:defZlimit}
\end{equation}
and
\begin{align}
\hat \fM_t &\triangleq \sum_{i=1}^K \sum_{j=i}^K \left(\hat \D_{t+1-i,j} + p_j \hat A_{t+1-i} \right) \nonumber \\ 
&=  \sum_{i=1}^K \sum_{j=i}^K \hat Z_{t+1-i,j}, \label{eq:MZLimit}
\end{align}
where $\hat Z_{t,i}$'s are the elements of $\hat {\vct Z}_t$. Since $\hat {\vct \D}_t$ and $\hat A_t$ are normal r.v.s by definition, $\hat {\vct Z}_t$ is normally distributed
as well and for all $t$ and $i$ we have
\begin{equation}\label{eq:ZDinftyLimit}
|\hat Z_{t,i}| \in\mathcal{M}_\infty.
\end{equation}

The properties of $\{\hat Y_t, t\in\rZ_+\}$ are summarized in the following lemma, including a limiting counterparts of Lemma~\ref{lemma:Yevo} and Corollary~\ref{coro:Ybound}.

\begin{lemma} \label{lemma:Ytsummary}
(i) The process $\{\hat Y_t, t\in\rZ_+\}$ satisfies for all $t \geq K$
\[
\hat Y_t = \hat \fM_t + \sum_{i=1}^K p_i (\hat Y_{t-i} - \beta)^+.
\]
(ii) For every $t \in \rZ_+$
\begin{equation*}
(-\hat Y_t)\in \mathcal{M}_\infty.
\end{equation*}
(iii) For every $k\in\rZ_+$ and $t \geq k+K$
\begin{equation*}
\hat Y_t \leq \sum_{i=0}^k (\hat \fM_{t-i})^+ + \sum_{i=k+1}^{k+K} \tilde p_{i-k}(\hat Y_{t-i}-\beta)^+.
\end{equation*}
(iv) For $1\le i_1,\ldots,i_k\le K$ let $p(k) = \prod_{j=1}^k p_{i_j}$ and $s(k)= \sum_{j=1}^k i_j$ with $s(0)=0$.  Then, for $t \geq s(k)$
\begin{equation*}
\hat Y_t  \geq p(k) \hat Y_{t-s(k)} - \sum_{j=0}^{k-1} (\beta - \hat \fM_{t-s(j)})^+.
\end{equation*}
\end{lemma}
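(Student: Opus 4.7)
The proof parallels Lemma~\ref{lemma:Yevo} and Corollary~\ref{coro:Ybound} for the pre-limit process, since the limiting recursions (\ref{eq:L'})--(\ref{eq:D'}) are algebraically identical to those of Corollary~\ref{coro:MCscaled}, with $\beta_n\to\beta$ and the pre-limit driving terms $(\hat A^n_t,\hat{\vct \D}^n_t)$ replaced by their Gaussian limits $(\hat A_t,\hat{\vct \D}_t)$. For part~(i) I would read off the $K$th coordinate of (\ref{eq:L'}) using $({\cal T}\{\hat{\vct L}_t\})_K=0$, then iterate ${\cal T}$ to obtain a closed-form expression for each $\hat L_{t,j}$ exactly as in Lemma~\ref{lemma:Yevo}, using at each step the balance $\hat \D_{t+1}=\hat Q_t+\hat A_{t+1}-\hat Q_{t+1}$ that is forced by (\ref{eq:Q'})--(\ref{eq:D'}) in both of their cases. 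Summing these expressions over $j$, recognising $\hat \fM_t$ via (\ref{eq:MZLimit}), and combining with $\hat Y_t=\hat Q_t+\sum_k \hat L_{t,k}$ and $\hat Q_t=(\hat Y_t-\beta)^+$ then yields the identity in (i).

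For part~(ii), the identity in (i) gives $\hat Y_t\geq \hat \fM_t$ for $t\geq K$, since the sum $\sum_i p_i(\hat Y_{t-i}-\beta)^+$ is termwise nonnegative. By (\ref{eq:MZLimit}) and (\ref{eq:defZlimit}), $\hat \fM_t$ is a finite linear combination of jointly Gaussian $\hat Z_{s,i}$'s and is therefore Gaussian, so $(-\hat \fM_t)\in\mathcal{M}_\infty$ and hence $(-\hat Y_t)\in\mathcal{M}_\infty$. For $0\leq t<K$ a bounded number of iterations of (\ref{eq:L'})--(\ref{eq:D'}) expresses $\hat Y_t$ as a piecewise-linear (hence Lipschitz) function of the Gaussian driving noise together with the initial state $(\hat Q_0,\hat{\vct L}_0)$, so the same conclusion holds under the tacit assumption that $(\hat Q_0,\hat{\vct L}_0)\in\mathcal{M}_\infty$.

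For parts~(iii) and~(iv) the two inductions from Corollary~\ref{coro:Ybound} transcribe verbatim, with part~(i) in place of Lemma~\ref{lemma:Yevo}. In (iii), the base case $k=0$ follows from (i) together with $p_i\leq \tilde p_i$, while the inductive step uses $\tilde p_{i+1}+p_i=\tilde p_i$ to reassemble the bound after expanding the $\tilde p_1(\hat Y_{t-k-1}-\beta)^+$ term via (i). In (iv), the base case $k=1$ comes from applying $(\hat Y_{t-i_1}-\beta)^+\geq \hat Y_{t-i_1}-\beta$ to (i); the inductive step feeds this base case (with index $i_{k+1}$) into the $\hat Y_{t-s(k)}$ on the right-hand side of the inductive hypothesis, combining $p(k+1)=p(k)p_{i_{k+1}}$ with $s(k+1)=s(k)+i_{k+1}$ and $p_i\leq 1$. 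The only substantive new ingredient beyond the pre-limit proofs is the Gaussian domination underlying (ii); everything else is a mechanical transcription, and so I expect no significant obstacle.
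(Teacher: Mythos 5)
Your proposal is correct and follows essentially the same route as the paper, which itself simply declares parts (i), (iii) and (iv) analogous to Lemma~\ref{lemma:Yevo} and Corollary~\ref{coro:Ybound} and derives (ii) from $\hat Y_t \geq \hat \fM_t$ together with Gaussianity of the driving noise. Your extra care for $0\le t<K$ in part (ii) (conditioning on the initial state) is a reasonable refinement the paper glosses over, but it does not change the argument.
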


\begin{proof}
(i) The proof is analogous to the proof of Lemma~\ref{lemma:Yevo}. Part (ii) follows from $\hat Y_t \geq \hat V_t$ (i)
and the fact that $\hat A_t$ and $\hat {\vct \D}_t$ have normal distributions. The proofs of (iii) and (iv) are analogous to the proof of Corollary~\ref{coro:Ybound}.
\end{proof}

\subsection{Quadratic Lyapunov function}\label{section:Lyapunov2}
Here  we introduce a quadratic Lyapunov function and prove some of
its properties. To this end, we define $K$ vectors
$\vct\alpha_1,\ldots,\vct\alpha_K$, where elements of the vector
$\vct\alpha_k=(\alpha_{k,1},\ldots,\alpha_{k,K})$ are defined by
\begin{equation}\label{eq:alphaDef}
\alpha_{k,j}=(j-k)^+;
\end{equation}
let $\vct\alpha = (\vct\alpha_1,\ldots,\vct\alpha_K) \in \rR^{K^2}$.
Let a function $\Psi_\theta(\vct  y, {\vct  z}): \rR^{K+K^2} \to
\rR$ be defined by
\begin{equation}
\Psi_\theta(\vct  y, {\vct  z}) \triangleq \left(\tilde{\vct  p} \cdot {\vct  y} \,
 + \, {\vct\alpha} \cdot  {\vct  z} \right)^\theta \label{eq:PsiDef}
\end{equation}
and a set ${\cal R}_x$ by
\begin{equation}
{\cal R}_x \triangleq \{{\vct y} \in \rR^K: y_i < x \text{ for some } i \}. \label{eq:Rx}
\end{equation}
The case $\theta=2$ is of particular importance since it corresponds to a quadratic Lyapunov
function (see Appendix~B for the definition) as established below. Finally, we introduce $\bar {\vct  Y}^n_t \triangleq (\hat
Y^n_t, \ldots, \hat Y^n_{t-K+1})$ and $\bar {\vct  Z}^n_t \triangleq
(\hat{\vct  Z}^n_t, \ldots, \hat{\vct  Z}^n_{t-K+1})$; the ``bar"
symbol in $\bar {\vct  Y}^n_t$ and $\bar {\vct  Z}^n_t$ indicates
that elements of these vectors refer to different time indices.

\begin{prop}\label{prop:2LyapunovBound} There exist $\delta>0$, $\psi<\infty$ and $n_0$ such that for all $n\geq n_0$
\begin{equation*}
\expect \left[\Psi_2(\bar {\vct  Y}^n_t, \bar {\vct  Z}^n_t) \, \ind{\bar {\vct  Y}^n_{t-1} \notin{\cal R}_{\beta_n}} - \Psi_2(\bar {\vct  Y}^n_{t-1}, \bar {\vct  Z}^n_{t-1})\, \big| \,\bar {\vct  Y}^n_{t-1}, \bar {\vct  Z}^n_{t-1} \right] \leq -\delta \Psi_1(\bar {\vct  Y}^n_{t-1}, \bar {\vct  Z}^n_{t-1}) + \psi. 
\end{equation*}
\end{prop}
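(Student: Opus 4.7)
The plan is to split the analysis into two regimes according to whether $\bar{\vct Y}^n_{t-1}\in{\cal R}_{\beta_n}$ or not. In the first case the indicator vanishes and the inequality reduces to $-\Psi_2(\bar{\vct Y}^n_{t-1},\bar{\vct Z}^n_{t-1})=-\Psi_1^2\le -\delta\Psi_1+\delta^2/4$, so any $\psi\ge\delta^2/4$ works. The substantive analysis takes place in the complementary regime, where $\hat Y^n_{t-i}\ge\beta_n$ for every $i=1,\ldots,K$, the positive parts in Lemma~\ref{lemma:Yevo} all drop, and the recursion linearizes to $\hat Y^n_t=\hat\fM^n_t+\sum_{i=1}^K p_i(\hat Y^n_{t-i}-\beta_n)$.

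Writing $F_t\triangleq\Psi_1(\bar{\vct Y}^n_t,\bar{\vct Z}^n_t)$, the heart of the argument will be an exact telescoping identity for $F_t-F_{t-1}$. First, using $\tilde p_i-\tilde p_{i-1}=-p_{i-1}$, $\tilde p_K=p_K$, and the linearized recursion,
\[
\tilde{\vct p}\cdot\bar{\vct Y}^n_t-\tilde{\vct p}\cdot\bar{\vct Y}^n_{t-1}=\hat Y^n_t-\sum_{i=1}^K p_i\hat Y^n_{t-i}=\hat\fM^n_t-\beta_n.
\]
Second, exploiting $\alpha_{k,j}-\alpha_{k-1,j}=-\ind{j\ge k}$ together with $\vct\alpha_K\cdot\hat{\vct Z}^n_{t-K}=0$, a parallel computation yields
\[
\vct\alpha\cdot\bar{\vct Z}^n_t-\vct\alpha\cdot\bar{\vct Z}^n_{t-1}=\vct K\cdot\hat{\vct Z}^n_t-\hat\fM^n_t.
\]
Adding these cancels $\hat\fM^n_t$ and delivers the crucial identity $F_t-F_{t-1}=\vct K\cdot\hat{\vct Z}^n_t-\beta_n$, which is precisely what the particular choices of $\tilde{\vct p}$ and $\vct\alpha$ are engineered to achieve.

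Squaring and conditioning on $(\bar{\vct Y}^n_{t-1},\bar{\vct Z}^n_{t-1})$ then gives
\[
\expect\!\left[F_t^2-F_{t-1}^2\,\bigm|\,\bar{\vct Y}^n_{t-1},\bar{\vct Z}^n_{t-1}\right]=2F_{t-1}(\mu_n-\beta_n)+\expect\!\left[(\vct K\cdot\hat{\vct Z}^n_t-\beta_n)^2\,\bigm|\,\bar{\vct Y}^n_{t-1},\bar{\vct Z}^n_{t-1}\right],
\]
where $\mu_n\triangleq\expect[\vct K\cdot\hat{\vct Z}^n_t\mid a^n_{t-1}]$. The multinomial centering $\expect[\vct K\cdot\hat{\vct\D}^n_t\mid\|\vct\D^n_t\|]=0$ reduces $\mu_n$ to $\mu^{-1}\expect[\hat A^n_t\mid a^n_{t-1}]$, which tends to zero uniformly in $a^n_{t-1}$ by assumption~(\ref{eq:arrival-A1}); Lemma~\ref{lemma:Dbound} together with assumption~(\ref{eq:arrival-A2}) supplies a uniform bound on the second-moment term. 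Hence the drift is dominated by $-2\beta_n F_{t-1}+\gamma_n|F_{t-1}|+C$ with $\gamma_n\downarrow 0$ and $C<\infty$ both independent of the state.

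The hard part will be that $\vct\alpha\cdot\bar{\vct Z}^n_{t-1}$ is unbounded below, so $F_{t-1}$ may be arbitrarily negative even on $\{\bar{\vct Y}^n_{t-1}\notin{\cal R}_{\beta_n}\}$, which forces a delicate balancing of $\delta$ against $2\beta_n-\gamma_n$ (to control large positive $F_{t-1}$) and against $2\beta_n+\gamma_n$ (to control large negative $F_{t-1}$), together with a correspondingly large choice of $\psi$. By taking $\delta$ in the appropriate range close to $2\beta$ and $\psi$ sufficiently large, both case bounds can be made to hold simultaneously for all $n\ge n_0$; combining them with the trivial bound inside ${\cal R}_{\beta_n}$ yields the stated drift inequality.
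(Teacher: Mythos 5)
Your derivation tracks the paper's proof almost line for line: the linearization of Lemma~\ref{lemma:Yevo} on $\{\bar{\vct Y}^n_{t-1}\notin{\cal R}_{\beta_n}\}$, the telescoping identity $\tilde{\vct p}\cdot\bar{\vct Y}^n_t+{\vct\alpha}\cdot\bar{\vct Z}^n_t=\tilde{\vct p}\cdot\bar{\vct Y}^n_{t-1}+{\vct\alpha}\cdot\bar{\vct Z}^n_{t-1}-\beta_n+{\vct K}\cdot\hat{\vct Z}^n_t$ is exactly the paper's~(\ref{eq:2expDecomp}), and your treatment of the two conditional moments via the multinomial centering, (\ref{eq:arrival-A1}), Lemma~\ref{lemma:Dbound} and~(\ref{eq:arrival-A2}) is the paper's~(\ref{eq:AandD})--(\ref{eq:AandD2}). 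The disposal of the event $\{\bar{\vct Y}^n_{t-1}\in{\cal R}_{\beta_n}\}$ via $\psi\ge\delta^2/4$ is also correct.

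The problem is the final step. Write $\epsilon'_n$ for the state-dependent quantity $\expect[{\vct K}\cdot\hat{\vct Z}^n_t\mid\bar{\vct Y}^n_{t-1},\bar{\vct Z}^n_{t-1}]$ and $\gamma_n\ge|\epsilon'_n|$ for its uniform bound, so $\gamma_n\to 0$ but is in general strictly positive. Outside ${\cal R}_{\beta_n}$ the required inequality is $(\delta-2\beta_n+2\epsilon'_n)F_{t-1}\le\psi-C$ with $F_{t-1}=\Psi_1(\bar{\vct Y}^n_{t-1},\bar{\vct Z}^n_{t-1})$ ranging over all of $\rR$ (as you note, ${\vct\alpha}\cdot\bar{\vct Z}^n_{t-1}$ is unbounded below even on this event). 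Sending $F_{t-1}\to+\infty$ forces $\delta\le 2\beta_n-2\gamma_n$, while sending $F_{t-1}\to-\infty$ forces $\delta\ge 2\beta_n+2\gamma_n$; these are incompatible for every fixed $n$ with $\gamma_n>0$, and the constant $\psi$ cannot absorb a discrepancy that grows linearly in $|F_{t-1}|$. So ``taking $\delta$ in the appropriate range close to $2\beta$'' does not make the two cases hold simultaneously --- there is no admissible $\delta$ at all unless $\epsilon'_n\equiv 0$. (The paper's own write-up passes over this in silence when it multiplies~(\ref{eq:AandD}) by $2\Psi_1$, which is only legitimate when $\Psi_1\ge 0$.) The clean repair is to prove the drift inequality with an extra remainder, e.g.\ $\le-\delta\Psi_1+2\gamma_n\Psi_1^{-}+\psi$ where $\Psi_1^-=(-\Psi_1)^+$: since $\gamma_n\to 0$ and $\limsup_n\expect_{\pi_n}\Psi_1^-<\infty$ by (the proof of) Lemma~\ref{lemma:<0}, this weaker statement still delivers $\limsup_n\expect_{\pi_n}\Psi_1<\infty$ through Theorem~\ref{theorem:BoundsStationaryPolynomial}. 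As written, though, your concluding ``delicate balancing'' asserts something false, so the proof is not complete.
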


\begin{proof}
On the event $\{\bar {\vct  Y}^n_{t-1} \notin  {\cal R}_{\beta_n}\}$ Lemma~\ref{lemma:Yevo} renders in a vector form $\hat Y^n_t = \hat \fM^n_t - \beta_n + {\vct  p} \cdot \bar {\vct  Y}^n_{t-1}$, and, since $p_i + \tilde p_{i+1} = \tilde p_i$ by definition, it implies $\tilde {\vct  p} \cdot \bar {\vct  Y}^n_t = \hat \fM^n_t - \beta_n + \tilde {\vct  p} \cdot \bar {\vct  Y}^n_{t-1}$. Thus, the linear combination of $\bar {\vct Y}^n_t$ and $\bar {\vct Z}^n_t$ that appears in the definition of $\Psi_\theta$ can be expressed as
\begin{align}
\tilde {\vct  p} \cdot \bar {\vct  Y}^n_t +  {\vct\alpha} \cdot \bar {\vct  Z}^n_t
&= \hat \fM^n_t - \beta_n + \tilde {\vct  p} \cdot \bar {\vct  Y}^n_{t-1} + {\vct\alpha} \cdot \bar {\vct  Z}^n_t \nonumber \\
&= \tilde {\vct  p} \cdot \bar {\vct  Y}^n_{t-1} +
{\vct\alpha} \cdot \bar {\vct  Z}^n_{t-1} - \beta_n + {\vct K} \cdot  \hat {\vct Z}^n_t, \label{eq:2expDecomp}
\end{align}
where the second equality follows from~(\ref{eq:alphaDef}) and~(\ref{eq:MZ}). Then, based on~(\ref{eq:2expDecomp}), we obtain
\begin{align}
\expect &\left[\Psi_2(\bar {\vct  Y}^n_t, \bar {\vct Z}^n_t)
\ind{\bar {\vct Y}^n_{t-1} \notin{\cal R}_{\beta_n}}\, \big|\, \bar {\vct Y}^n_{t-1}, \bar {\vct  Z}^n_{t-1} \right] -  \Psi_2(\bar {\vct  Y}^n_{t-1}, \bar {\vct Z}^n_{t-1}) \notag \\
&\phantom{xxx}\leq 2 \Psi_1(\bar {\vct  Y}^n_{t-1}, \bar {\vct  Z}^n_{t-1}) \, \expect \left[-\beta_n + {\vct K} \cdot \hat {\vct Z}^n_t \, \big|\, \bar {\vct Y}^n_{t-1}, \bar {\vct  Z}^n_{t-1} \right] + \expect \left[\left(-\beta_n + {\vct K} \cdot \hat {\vct Z}^n_t \right)^2\, \!\!\big|\, \bar {\vct Y}^n_{t-1}, \bar {\vct  Z}^n_{t-1} \right]. \label{eq:2EPhiPhi1n}
\end{align}

Now, by~(\ref{eq:defZ}) the sum in~(\ref{eq:2EPhiPhi1n})
can be expressed in terms of $\hat A^n_t$ and $\hat {\vct \D}^n_t$ rendering
\begin{align}
\expect \left[{\vct K} \cdot \hat {\vct Z}^n_t  \, \big|\,  \bar {\vct Y}^n_{t-1}, \bar {\vct Z}^n_{t-1} \right]
&= \expect \left[\hat A^n_t /\mu + {\vct K} \cdot \hat {\vct \D}^n_t \, \big| \, \bar {\vct Y}^n_{t-1}, \bar {\vct Z}^n_{t-1} \right] \notag\\
&= \expect \left[\hat A^n_t /\mu \, \big| \, \bar {\vct Y}^n_{t-1}, \bar {\vct Z}^n_{t-1} \right] \nonumber\\
&\leq \sup_{a \geq 0} \expect \left[\hat A^n_t/\mu \,\big|\, a^n_{t-1} = a \right], \label{eq:AandD}
\end{align}
where the last inequality is due to the fact that $A^n_t$ is
conditionally independent of $\hat {\vct \D}^n_t$ and $(\bar {\vct
Y}^n_{t-1}, \bar {\vct Z}^n_{t-1})$  given $a_{t-1}$ (the arrival
process is renewal). The second expectation on the right-hand side
of~(\ref{eq:2EPhiPhi1n}) can be upper bounded by utilizing the the
same fact in addition to observation that~$\hat {\vct \D}^n_t$ is
conditionally independent of $\hat A^n_t$ and $(\bar {\vct
Y}^n_{t-1}, \bar {\vct Z}^n_{t-1})$ given $\hat J^n_t$ -- see
Corollary~\ref{coro:MCscaled}. These two facts yield
\begin{align}
\expect \left[\left(-\beta_n + {\vct K} \cdot \hat {\vct Z}^n_t
\right)^2\, \big|\, \bar {\vct Y}^n_{t-1}, \bar {\vct  Z}^n_{t-1}
\right] &= \expect \left[\left(\hat A^n_t/\mu -\beta_n \right)^2 +
\left( {\vct K} \cdot \hat {\vct \D}^n_t \right)^2\, \big|\,
\bar {\vct Y}^n_{t-1}, \bar {\vct  Z}^n_{t-1} \right] \notag\\
&\hspace{-2in}\leq \sup_{a \geq 0}
 \expect\left[\left(\hat A^n_t/\mu -\beta_n\right)^2 \, \bigg|\, a^n_{t-1}=a \right]
 + \max_{0 \leq i \leq n} \expect\left[\left( {\vct K}\cdot \hat {\vct \D}^n_t \right)^2 \big|\, \| {\vct \D}^n_t\| = i \right] \nonumber\\
&\hspace{-2in}\leq \sup_{a \geq 0}
\expect\left[\left(\hat A^n_t/\mu -\beta_n\right)^2 \, \bigg|\, a^n_{t-1}=a \right] + \sigma^2_s, \label{eq:AandD2}
\end{align}
where the last inequality follows from Lemma~\ref{lemma:Dbound} and
$\|{\vct \D}^n_t\| \leq n$.  The limit (as $n \to \infty$) of the right-hand
side of the preceding inequality remains bounded due to the
assumption~(\ref{eq:arrival-A2}) on the arrival process
(Section~\ref{sec:GGN}) and the fact that service times are bounded
($S \leq K$).

Combining~(\ref{eq:2EPhiPhi1n}) with~(\ref{eq:AandD}),
(\ref{eq:arrival-A1}) and~(\ref{eq:AandD2})  yields the statement of
the theorem.
\end{proof}

\begin{lemma} \label{lemma:inRbound} The following inequality holds
\[
\limsup_{n\rightarrow\infty}
\expect_{\pi_n}  \left[ \Psi_2(\bar {\vct Y}^n_t, \bar {\vct  Z}^n_t) \,\ind{\bar {\vct  Y}^n_{t-1} \in {\cal
R}_{\beta_n}} \right] < \infty.
\]
\end{lemma}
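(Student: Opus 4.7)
The plan is to bound $\Psi_2(\bar {\vct Y}^n_t, \bar {\vct Z}^n_t)$ on the event $\{\bar {\vct Y}^n_{t-1}\in\mathcal{R}_{\beta_n}\}$ by quantities with uniformly bounded stationary second moments, exploiting that the event forces an idle epoch within the window $\{t-K,\ldots,t-1\}$. I would start with the split $\Psi_2\le 2(\tilde{\vct p}\cdot\bar {\vct Y}^n_t)^2+2(\vct\alpha\cdot\bar {\vct Z}^n_t)^2$. The second summand does not require the event: $\vct\alpha\cdot\bar {\vct Z}^n_t$ is a fixed linear combination of the entries of $\hat {\vct Z}^n_s=\hat {\vct \D}^n_s+\vct p\,\hat A^n_s$ for $s\in\{t-K+1,\ldots,t\}$, and Lemma~\ref{lemma:Dbound} (using $\|{\vct \D}^n_s\|\le n$) gives uniform bounds on the second moments of $\hat \D^n_{s,j}$, while assumption~(\ref{eq:arrival-A2}) does the same for $\hat A^n_s$; hence $\limsup_n \expect[(\vct\alpha\cdot\bar {\vct Z}^n_t)^2]<\infty$.

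For the first summand, I would use $(\tilde{\vct p}\cdot\bar {\vct Y}^n_t)^2\le K\sum_{j=1}^K\tilde p_j^2(\hat Y^n_{t-j+1})^2$ and split each $(\hat Y^n_s)^2=((\hat Y^n_s)^+)^2+((\hat Y^n_s)^-)^2$. The negative part is handled unconditionally: $\hat Y^n_s\ge\hat \fM^n_s$ from Lemma~\ref{lemma:Yevo} yields $(\hat Y^n_s)^-\le|\hat \fM^n_s|$, whose second moment is bounded uniformly by the same argument as for the $\bar {\vct Z}^n_t$ piece. For the positive part I would iterate the inequality
\[
(\hat Y^n_r)^+\le(\hat \fM^n_r)^+ + \sum_{i=1}^K p_i(\hat Y^n_{r-i})^+,
\]
which follows from Lemma~\ref{lemma:Yevo} via $\beta_n\ge 0$ and the elementary bound $(a+b)^+\le a^+ + b$ for $b\ge 0$. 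Any branch of the iteration that reaches the idle time $t-i^*$ contributes at most $\beta_n$, since on the event $(\hat Y^n_{t-i^*})^+<\beta_n$; the remainder is a bounded linear combination of $(\hat \fM^n_u)^+$ at nearby times, each with uniformly bounded second moment.

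The main obstacle is the branches of the iteration that jump past $t-i^*$ and terminate in $(\hat Y^n_u)^+$ for $u<t-K$, outside the event window. I expect these to be controlled by a truncation argument: replace $\Psi_2$ by $\Psi_2\wedge M$, use stationarity of $\pi_n$ together with Proposition~\ref{prop:2LyapunovBound} to close the Lyapunov analysis at finite $M$, and then let $M\to\infty$ via monotone convergence. This yields uniform control of $\expect_{\pi_n}[\Psi_1]$, from which the residual $(\hat Y^n_u)^+$ contributions are bounded in expectation to close the estimate. Combining the bounds on the two summands via Cauchy--Schwarz then gives the stated $\limsup$ bound.
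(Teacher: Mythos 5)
Your treatment of the $\vct\alpha\cdot\bar{\vct Z}^n_t$ term, of the negative parts via $(\hat Y^n_s)^-\le|\hat\fM^n_s|$, and of the iteration branches that actually land on the idle epoch is sound and parallels the paper. The genuine gap is exactly where you locate it: the branches terminating in $(\hat Y^n_u)^+$ for $u<t-K$, i.e.\ at times \emph{preceding} the one coordinate the event controls. Your proposed fix for these is circular. The uniform bound on $\expect_{\pi_n}[\Psi_1]$ is obtained in the paper only by feeding Proposition~\ref{prop:2LyapunovBound} \emph{and} Lemma~\ref{lemma:inRbound} into Theorem~\ref{theorem:BoundsStationaryPolynomial}; hypothesis~(\ref{eq:CRdef2}) of that theorem is precisely the statement you are trying to prove, so its conclusion cannot be invoked here. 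Truncating $\Psi_2$ at a level $M$ does not repair this: on ${\cal R}_{\beta_n}$ (whose probability is bounded away from zero) the drift inequality gives no information, the only available bound is $\Psi_2\wedge M\le M$, and this diverges as $M\to\infty$. Even setting the circularity aside, a bound on $\expect_{\pi_n}[\Psi_1]$ is a \emph{first}-moment bound on $\hat Y^n$ and cannot control the \emph{second}-moment quantities $((\hat Y^n_u)^+)^2$ and the cross terms that survive in $\Psi_2$.

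What is missing is a \emph{pathwise} bound on $(\hat Y^n_u)^+$ for $u$ before the idle time, and this is the real content of the paper's proof. One first partitions ${\cal R}_{\beta_n}=\cup_k{\cal R}^k_{\beta_n}$ according to the \emph{first} index $k$ with $\hat Y^n_{t-k}<\beta_n$, so that the recursion of Lemma~\ref{lemma:Yevo} unrolls exactly over $i=1,\ldots,k-1$ and the $i=k$ term vanishes. The residual terms $(\hat Y^n_{t-i})^+$ with $i>k$ are then controlled by running Corollary~\ref{coro:Ybound}(ii) \emph{backward} from the idle epoch: combining $\hat Y^n_{t-k}\ge p(j)\,\hat Y^n_{t-k-s(j)}-\sum_{l<j}(\beta-\hat\fM^n_{t-k-s(l)})^+$ with $\hat Y^n_{t-k}<\beta_n$ yields $(\hat Y^n_{t-k-s(j)})^+\le p(j)^{-1}\bigl(\beta_n+\sum_{l<j}(\beta-\hat\fM^n_{t-k-s(l)})^+\bigr)$ for every representable lag $s(j)$; the number-theoretic Lemma~\ref{lemma:number} (this is where the assumption that two relatively prime service durations have $p_ip_j>0$ enters) supplies a window of $K$ consecutive representable lags, and Corollary~\ref{coro:Ybound}(i) propagates the bound forward to all the needed times. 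The outcome is the deterministic inequality~(\ref{eq:E<inftyS1}), after which Lemma~\ref{lemma:Dbound} and~(\ref{eq:arrival-A2}) give the second-moment bound. Without some version of this backward-propagation step your argument does not close.
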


In the proof of the lemma the following number-theoretic fact will be utilized. For completeness we provide its proof.

\begin{lemma}  \label{lemma:number}
Let $p$ and $q$ be two relatively prime numbers. For any $K \in
\mathbb{N}$  there exists $k \in \mathbb{N}$ such that any $l \in
\{k+1,\ldots,k+K\}$ can be represented as $l=i_l p + j_l q$ for some
$i_l, j_l \in \mathbb{N}$.
\end{lemma}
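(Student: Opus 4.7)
The plan is to invoke Bezout's identity and then exploit the translation symmetry of representations to ``pin down'' one of the coefficients in a positive residue window. Since $\gcd(p,q)=1$, Bezout guarantees integers $a,b$ with $ap+bq=1$, so every $l \in \mathbb{N}$ admits the (a priori non-positive) integer representation $l = (la)p + (lb)q$. Moreover, the kernel of the map $(i,j)\mapsto ip+jq$ on $\mathbb{Z}^2$ is generated by $(q,-p)$, so the complete set of integer representations of a fixed $l$ is $\{(i+tq,\,j-tp) : t\in\mathbb{Z}\}$.

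Using this freedom, I would select the unique $t$ making $i_l := i + tq$ lie in the complete residue system $\{1,2,\ldots,q\}$; this automatically gives $i_l\geq 1$. The corresponding $j_l := (l - i_l p)/q$ is then an integer, and the condition $j_l\geq 1$ reduces to $l \geq i_l p + q$. Since $i_l \leq q$, the bound $l \geq q(p+1)$ is sufficient uniformly in the choice of $i_l$.

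Setting $k := q(p+1)$ (which, notably, does not even depend on $K$), every $l \geq k+1$ admits a representation $l = i_l p + j_l q$ with $i_l, j_l \in \mathbb{N}$, and in particular the conclusion holds for the window $\{k+1,\ldots,k+K\}$. There is no genuine obstacle here: the claim is a weak version of the Sylvester--Frobenius (``Chicken McNugget'') theorem, which furnishes the optimal threshold $(p-1)(q-1)$, but the elementary shift argument above suffices for the downstream use in Lemma~\ref{lemma:inRbound}.
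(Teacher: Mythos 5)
Your proof is correct, and it takes a genuinely different (and slightly stronger) route than the paper's. The paper fixes, for each $m\in\{1,\ldots,K\}$, one Bezout representation $m=i'_mp+j'_mq$ with possibly negative integer coefficients, and then shifts all $K$ of them simultaneously: adding $t(p+q)$ to $m$ adds $t$ to each coefficient, so a sufficiently large common $t$ makes every coefficient positive and one takes $k=t(p+q)$. That $k$ therefore depends on $K$ and on the chosen representations. You instead normalize the representation of each individual $l$ by translating along the kernel generator $(q,-p)$ so that the $p$-coefficient lands in the complete residue window $\{1,\ldots,q\}$, and then check that the $q$-coefficient is automatically positive once $l\ge q(p+1)$. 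This is the standard Sylvester--Frobenius argument; it buys you a threshold $k=q(p+1)$ that is explicit and independent of $K$, and in fact proves the stronger statement that \emph{every} sufficiently large $l$ is representable, of which the windowed claim is a special case. All the steps check out: the kernel description uses $\gcd(p,q)=1$ correctly, the choice of $t$ is unique since $\{1,\ldots,q\}$ is a complete residue system mod $q$, and $i_l\le q$ gives the uniform bound $i_lp+q\le q(p+1)$. (Your parenthetical about the optimal threshold $(p-1)(q-1)$ refers to nonnegative coefficients; for the strictly positive coefficients the lemma asks for, the sharp threshold is $l\ge pq+1$, but this is immaterial to the argument.) Either proof is adequate for the downstream use in Lemma~\ref{lemma:inRbound}.
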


\begin{proof}
Since $p$ and $q$ are relatively prime then any $m \in
\{1,2,\ldots,K\}$ can be  represented as $m = i'_n p + j'_n q$ for
some possibly negative integers $i'_n$ and $j'_n$, see
e.g.~\cite[p.~104]{LPV03}. Let $t = \max_m \{i'_n, j'_n \} + 1$ and
$k=tp+tq$. Then every $l\in\{k+1,\ldots,k+K\}$ is given by $l=i_l p
+ j_l q$, where $i_l = (t+ i'_{l-k})$ and $j_l = (t+ j'_{l-k})$.
\end{proof}

\begin{proof}[Proof of Lemma~\ref{lemma:inRbound}]
Let ${\cal R}_{x}^k = \{\vct  y \in \rR^K:\,  y_1 \geq x, \ldots,
y_{k-1} \geq x, y_k < x\}$, $1\leq k \leq K$. It is sufficient
to prove the  statement of the lemma with ${\cal R}_{\beta_n}$
replaced with ${\cal R}^k_{\beta_n}$ for an arbitrary
$k\in\{1,\ldots,K\}$ since ${\cal R}_{\beta_n} = \cup_k {\cal
R}^k_{\beta_n}$. The proof is based on demonstrating the following
bound for some positive integer $m$ and constants $\{c_i, i=0,\ldots,m+k+K\}$, $\{d_i, i=0,\ldots,m+k+K\}$
such that for all $n$
\begin{equation}
\Psi_2(\bar {\vct  Y}^n_t, \bar {\vct  Z}^n_t) \,\ind{\bar {\vct  Y}^n_{t-1} \in {\cal R}^k_{\beta_n}}
\leq \left(\sum_{i=0}^{m+k+K} (c_i + d_i |\hat \fM^n_{t-i}|) \right)^2. \label{eq:E<inftyS1}
\end{equation}
Then the statement of the lemma follows from the definition of $\hat
\fM^n_t$,  Lemma~\ref{lemma:Dbound} and~(\ref{eq:arrival-A2}) applied in the unconditioned case; thus, we focus on demonstrating~(\ref{eq:E<inftyS1}).

On event $\{\bar {\vct  Y}^n_{t-1} \in {\cal R}_{\beta_n}^k \}$ applying Lemma~\ref{lemma:Yevo} to $\hat Y^n_t$ yields
\begin{align}
\hat Y^n_t &= \hat \fM^n_t + \sum_{i=1}^{k-1} p_i (\hat Y^n_{t-i} - \beta_n)  + \sum_{i=k+1}^K  p_i (\hat Y^n_{t-i} - \beta_n)^+ \nonumber \\
&= \hat \fM^n_t + \sum_{i=1}^{k-1} g_i (\hat \fM^n_{t-i} -\beta_n) + \sum_{i=k+1}^{K+k-1} h_i (\hat Y^n_{t-i} - \beta_n)^+, \label{eq:Y<inftyeq1}
\end{align}
where the constants $g_i$'s and $h_i$'s can be computed in a
recursive  fashion: $g_0=1$, $g_i = \sum_{j=0}^{i-1} g_j p_{i-j}$,
for $i=1,\ldots,k-1$, and $h_i  = \sum_{j=(i-K)^+}^{k-1} g_j
p_{i-j}$, for $i=k+1,\ldots,K+k-1$. Hence, based
on~(\ref{eq:Y<inftyeq1}), there exist finite $g$ and $h$ such that
\begin{equation}
\tilde {\vct  p} \cdot \bar {\vct  Y}^n_t \leq
g \sum_{i=0}^{k-1} (\hat \fM^n_{t-i})^+ + h \sum_{i=k+1}^{K+k-1} (\hat Y^n_{t-i})^+. \label{eq:Y<inftyeq5}
\end{equation}
Next, on the event of interest, $\{\bar {\vct Y}^n_{t-1} \in {\cal
R}^k_{\beta_n}\}$, we upper bound the second sum
in~(\ref{eq:Y<inftyeq5}) in two steps: (i) bound values of $\{\hat
Y^n_t, t \in \rZ_+\}$ on a time interval of lenght $K$ prior to time
$(t-k)$ based on $\{\hat Y^n_{t-k} < \beta_n\}$, and (ii) obtain a
desired bound based on (i). First, consider arbitrary $i_1, i_2 \leq
K$ such that $p_{i_1} p_{i_2}>0$; such a pair of indices exists
since $\sigma_s>0$ (see Section~\ref{sec:GGN}). By
Lemma~\ref{lemma:number} there exists a sufficiently large $m$ such
that every element of $\{m+1, m+2, \ldots, m+K\}$ can be represented
as $r_1 i_1 + r_2 i_2$ for some nonnegative integers $r_1$ and
$r_2$. Invoking the second part of Corollary~\ref{coro:Ybound} and
$\{\hat Y^n_{t-k} < \beta_n\}$  yields the existence of finite $r$,
$q$ and $m \geq K$ such that
\[
(\hat Y^n_{t-k-i})^+ \leq r + q \sum_{j=k}^{m+k+K} |\hat \fM^n_{t-j}|,
\]
for all $i \in \{m+1,\ldots,m+K\}$; we also used $|x+y| \leq
|x|+|y|$ and the fact the  elements of the sum are nonnegative. The
preceding inequality and the first part of
Corollary~\ref{coro:Ybound} assure the existence of finite $r'$ and
$q'$ such that
\begin{equation}
h \sum_{i=k+1}^{K+k-1} (\hat Y^n_{t-i})^+ \leq r' + q' \sum_{i=k}^{m+k+K} |\hat \fM^n_{t-i}| \label{eq:Y<inftyeq2}
\end{equation}
since each summand on the left-hand side is upper bounded by an
expression that appears on  the right-hand side with $r'$ and $q'$
replaced by some other finite constants.

Next, combining~(\ref{eq:Y<inftyeq5}) and~(\ref{eq:Y<inftyeq2}) provides
a bound on $\tilde {\vct  p} \cdot \bar {\vct  Y}^n_t$ in terms of $\hat \fM^n_t,\ldots, \hat \fM^n_{t-m-k-K}$:
\begin{align*}
\tilde {\vct  p} \cdot \bar {\vct  Y}^n_t
&\leq g \sum_{i=0}^{k-1} (\hat \fM^n_{t-i})^+ + r' + q' \sum_{i=k}^{m+k+K} |\hat \fM^n_{t-i}| \\
&\leq r' + g' \sum_{i=0}^{m+k+K} |\hat \fM^n_{t-i}|,
\end{align*}
where $g'$ is finite. Finally, from (\ref{eq:MZ}) we have that the absolute value of
${\vct\alpha} \cdot \bar {\vct  Z}^n_t$ is upper bounded by a linear combination of $|\hat \fM^n_{t-i}|$'s. Then, (\ref{eq:E<inftyS1}) follows from the preceding bound. This completes the proof.
\end{proof}

\begin{lemma} \label{lemma:<0} The following inequality holds
\[
\limsup_{n\rightarrow\infty}
\expect_{\pi_n}
\left[- \Psi_1(\bar {\vct Y}^n_t, \bar {\vct  Z}^n_t) \,\ind{\Psi_1(\bar {\vct  Y}^n_t, \bar {\vct Z}^n_t)<0} \right] < \infty.
\]
\end{lemma}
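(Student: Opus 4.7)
The plan is to dominate $(-\Psi_1(\bar{\vct Y}^n_t,\bar{\vct Z}^n_t))\,\ind{\Psi_1<0}$ pointwise by an explicit linear combination of the noise variables $|\hat Z^n_{s,j}|$, and then to control the resulting first moments uniformly in $n$. The crucial input is that under $\pi_n$ Lemma~\ref{lemma:Yevo} yields the a.s.\ lower bound
\[
\hat Y^n_s - \hat\fM^n_s \;=\; \sum_{i=1}^K p_i(\hat Y^n_{s-i}-\beta_n)^+ \;\geq\; 0
\]
at every time $s$ (the restriction $t\geq K$ in the lemma is vacuous under two-sided stationarity). Taking the $\tilde p_i$-weighted combination gives $\tilde{\vct p}\cdot\bar{\vct Y}^n_t \geq \sum_{i=1}^K \tilde p_i\,\hat\fM^n_{t-i+1}$ a.s., so $\Psi_1(\bar{\vct Y}^n_t,\bar{\vct Z}^n_t)\geq R^n_t$, where
\[
R^n_t \;\triangleq\; \sum_{i=1}^K \tilde p_i\,\hat\fM^n_{t-i+1}\;+\;\vct\alpha\cdot\bar{\vct Z}^n_t.
\]

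On the event $\{\Psi_1<0\}$ one then has $R^n_t\leq\Psi_1<0$, hence $-\Psi_1\leq -R^n_t = |R^n_t|$, which gives the pointwise bound $(-\Psi_1)\,\ind{\Psi_1<0}\leq |R^n_t|$. Using identity~(\ref{eq:MZ}) to expand each $\hat\fM^n_s$ as a sum with integer weights (bounded by $K$) of $\hat Z^n_{s',j}$ over a window of size $K$, and the definition~(\ref{eq:alphaDef}) which makes $\vct\alpha\cdot\bar{\vct Z}^n_t$ a linear combination with coefficients at most $K$ of the entries of $\bar{\vct Z}^n_t$, I obtain
\[
|R^n_t|\;\leq\;C\sum_{(s,j)\in\mathcal{W}}|\hat Z^n_{s,j}|
\]
for some finite constant $C$ and a finite index set $\mathcal{W}$, both independent of $n$.

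It remains to bound $\expect_{\pi_n}|\hat Z^n_{s,j}|$ uniformly in $n$. Writing $\hat Z^n_{s,j}=\hat\D^n_{s,j}+p_j\hat A^n_s$ and applying Cauchy--Schwarz,
\[
\expect_{\pi_n}|\hat Z^n_{s,j}|\;\leq\;\bigl(\expect_{\pi_n}(\hat\D^n_{s,j})^2\bigr)^{1/2}\;+\;p_j\bigl(\expect_{\pi_n}(\hat A^n_s)^2\bigr)^{1/2}.
\]
Conditioning on $\|\vct\D^n_s\|$ and invoking Lemma~\ref{lemma:Dbound} together with $\|\vct\D^n_s\|\leq n$ yields $\expect_{\pi_n}(\hat\D^n_{s,j})^2\leq p_j(1-p_j)$, while assumption~(\ref{eq:arrival-A2}) combined with $\lambda_n/n\to\mu$ bounds $\expect_{\pi_n}(\hat A^n_s)^2$ uniformly in $n$. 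Summing over the finite set $\mathcal{W}$ and taking $\limsup_{n\to\infty}$ then concludes the argument. I do not foresee a substantive obstacle: the only conceptual step is the extraction of the a.s.\ lower bound $\hat Y^n_s\geq\hat\fM^n_s$ from Lemma~\ref{lemma:Yevo}, which converts the negative part of $\Psi_1$ into noise terms whose moments are already under control from the preliminary results.
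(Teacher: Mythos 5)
Your proposal is correct and follows essentially the same route as the paper's proof: the key step in both is the pointwise bound $\hat Y^n_s \geq \hat \fM^n_s$ extracted from Lemma~\ref{lemma:Yevo}, which lower-bounds $\Psi_1$ by a linear combination of the $\hat Z^n_{s,j}$'s whose first moments are then controlled via the Cauchy--Schwarz inequality, Lemma~\ref{lemma:Dbound} and assumption~(\ref{eq:arrival-A2}). Your write-up merely makes explicit the intermediate quantity $R^n_t$ and the moment bookkeeping that the paper leaves implicit.
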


\begin{proof}
Lemma~\ref{lemma:Yevo} renders $\hat Y^n_t \geq \hat V^n_t$ that leads to
\[
\Psi_1(\bar {\vct Y}^n_t, \bar {\vct  Z}^n_t)
\geq  \sum_{i=1}^K \tilde p_i \hat \fM^n_{t+1-i} + {\mb \alpha} \cdot \bar {\vct  Z}^n_t.
\]
The statement follows from the preceding relationship, (\ref{eq:hatMt}), Cauchy-Schwarz inequality, (\ref{eq:arrival-A2}) and Lemma~\ref{lemma:Dbound}.
\end{proof}

\subsection{Geometric Lyapunov function}\label{section:LyapunovFunction}

In this section we introduce a family of Lyapunov functions
parameterized by some $\theta>0$ and prove some of its properties.
Given a parameter $\theta>0$, consider a function $\Phi_\theta (\vct
y, {\vct  z}): \rR^{K+K^2} \to \rR_+$ defined by
\begin{equation}
\Phi_\theta(\vct  y,  {\vct  z}) \triangleq \exp \left\{ \theta \tilde{\vct  p} \cdot {\vct  y} \,
 + \, \theta {\vct\alpha} \cdot  {\vct  z}\right\}. \label{eq:PhiDef}
\end{equation}
We first consider $\Phi_\theta$ as a function of the limiting pair $(\bar
{\vct Y}_t, \bar {\vct Z}_t)$,  where $\bar {\vct  Y}_t \triangleq
(\hat Y_t, \ldots, \hat Y_{t-K+1})$ and $\bar {\vct  Z}_t \triangleq
(\hat{\vct  Z}_t, \ldots, \hat{\vct  Z}_{t-K+1})$. The next
proposition establishes a negative drift of the Lyapunov function
$\Phi_\theta$ under an assumption $\theta<\theta^*/\mu$. Moreover,
$\theta^*/\mu$ is the critical exponent under which $\Phi_\theta$ is
a geometric Lyapunov function (see Appendix~B for the definition). Recall the definition of ${\cal R}_x$ from~(\ref{eq:Rx}).

\begin{prop}\label{prop:LyapunovBoundLimit} For every $\theta < \theta^*/\mu$ there exists
$\delta=\delta_\theta>0$ such that
\begin{equation}
\expect \left[\Phi_\theta(\bar {\vct  Y}_t, \bar {\vct  Z}_t) \,
\ind{\bar {\vct  Y}_{t-1}\notin{\cal R}_{\beta}} \, \big| \bar {\vct  Y}_{t-1},
\bar {\vct  Z}_{t-1} \right] \leq (1-\delta) \, \Phi_\theta (\bar {\vct  Y}_{t-1}, \bar {\vct  Z}_{t-1})
\label{eq:PhidecreaseLimit}
\end{equation}
and for every $\theta>\theta^*/\mu$ there exists $\delta=\delta_\theta>0$ such that
\begin{equation}
\expect \left[ \Phi_\theta(\bar {\vct  Y}_t, \bar {\vct  Z}_t)
\, \big|\bar {\vct  Y}_{t-1}, \bar {\vct  Z}_{t-1} \right]
\geq (1+\delta) \, \Phi_\theta (\bar {\vct  Y}_{t-1}, \bar {\vct  Z}_{t-1}).   \label{eq:PhiincreaseLimit}
\end{equation}
\end{prop}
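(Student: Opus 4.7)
The plan is to mirror the argument of Proposition~\ref{prop:2LyapunovBound}, replacing the quadratic Lyapunov function $\Psi_2$ with the exponential function $\Phi_\theta$ and the second-moment bound with a Gaussian moment generating function computation. The simplification at the limit level is that the innovations $\hat A_t$ and $\hat{\vct\D}_t$ are jointly Gaussian by construction and independent of $(\bar{\vct Y}_{t-1},\bar{\vct Z}_{t-1})$, so the one-step MGF of the linear combination appearing inside $\Phi_\theta$ is computable in closed form.

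I would first establish the limiting analogue of~(\ref{eq:2expDecomp}). On the event $\{\bar{\vct Y}_{t-1}\notin{\cal R}_\beta\}$ every coordinate of $\bar{\vct Y}_{t-1}$ is at least $\beta$, so Lemma~\ref{lemma:Ytsummary}(i) collapses to the affine recursion $\hat Y_t = \hat\fM_t - \beta + \vct p\cdot\bar{\vct Y}_{t-1}$, and the same rearrangement of the $\vct\alpha$-term used in the proof of Proposition~\ref{prop:2LyapunovBound} gives
\[
\tilde{\vct p}\cdot\bar{\vct Y}_t + \vct\alpha\cdot\bar{\vct Z}_t \;=\; \tilde{\vct p}\cdot\bar{\vct Y}_{t-1}+\vct\alpha\cdot\bar{\vct Z}_{t-1} - \beta + \vct K\cdot\hat{\vct Z}_t.
\]
Substituting into the definition~(\ref{eq:PhiDef}) factors $\Phi_\theta(\bar{\vct Y}_{t-1},\bar{\vct Z}_{t-1})$ out of $\Phi_\theta(\bar{\vct Y}_t,\bar{\vct Z}_t)\,\ind{\bar{\vct Y}_{t-1}\notin{\cal R}_\beta}$ and leaves a residual $\exp\{\theta(-\beta+\vct K\cdot\hat{\vct Z}_t)\}$ that is independent of the past.

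Next I would compute the one-step Gaussian MGF. By~(\ref{eq:defZlimit}) one has $\vct K\cdot\hat{\vct Z}_t = \vct K\cdot\hat{\vct\D}_t + (\vct K\cdot\vct p)\hat A_t$, and since $\vct K\cdot\vct p = \expect S = 1/\mu$, $\vct K\Sigma\vct K^T = \mathrm{Var}(S)=\sigma_s^2$, and $c_s=\mu\sigma_s$, the scalar $\vct K\cdot\hat{\vct Z}_t$ is centered normal with variance $(c_a^2+c_s^2)/\mu$. Conditioning and integrating the Gaussian residual thus yield
\[
\expect\!\left[\Phi_\theta(\bar{\vct Y}_t,\bar{\vct Z}_t)\,\ind{\bar{\vct Y}_{t-1}\notin{\cal R}_\beta}\,\bigm|\,\bar{\vct Y}_{t-1},\bar{\vct Z}_{t-1}\right] \leq \Phi_\theta(\bar{\vct Y}_{t-1},\bar{\vct Z}_{t-1})\,\exp\!\left\{-\theta\beta + \tfrac{\theta^2(c_a^2+c_s^2)}{2\mu}\right\},
\]
and the exponent on the right is strictly negative precisely for $\theta$ below the critical threshold stated in the proposition, which delivers~(\ref{eq:PhidecreaseLimit}) with a uniform $\delta_\theta>0$.

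For the lower bound~(\ref{eq:PhiincreaseLimit}) I would use that the same identity relaxes to an \emph{unconditional} inequality: since $(x-\beta)^+\geq x-\beta$ and $\sum_{i=1}^K p_i=1$, Lemma~\ref{lemma:Ytsummary}(i) implies $\hat Y_t\geq \hat\fM_t - \beta + \vct p\cdot\bar{\vct Y}_{t-1}$ in every state, and hence $\tilde{\vct p}\cdot\bar{\vct Y}_t + \vct\alpha\cdot\bar{\vct Z}_t \geq \tilde{\vct p}\cdot\bar{\vct Y}_{t-1}+\vct\alpha\cdot\bar{\vct Z}_{t-1} - \beta + \vct K\cdot\hat{\vct Z}_t$ always. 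Exponentiating (valid since $\theta>0$) and conditioning reproduce the same Gaussian MGF, now supplying a multiplicative factor that strictly exceeds $1$ whenever $\theta$ lies above the threshold. The only real obstacle is bookkeeping: the rearrangement of the $\vct\alpha$-term and the computation of $\mathrm{Var}(\vct K\cdot\hat{\vct Z}_t)$ must be tracked carefully so that the critical exponent lines up with the stated $\theta^*/\mu$; once that is done, everything else reduces to the one-dimensional Gaussian moment generating function and holds without any further tightness or truncation argument.
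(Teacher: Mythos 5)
Your proposal is correct and follows essentially the same route as the paper's proof: the same decomposition $\tilde{\vct p}\cdot\bar{\vct Y}_t+\vct\alpha\cdot\bar{\vct Z}_t=\tilde{\vct p}\cdot\bar{\vct Y}_{t-1}+\vct\alpha\cdot\bar{\vct Z}_{t-1}-\beta+\vct K\cdot\hat{\vct Z}_t$ on the complement of ${\cal R}_\beta$, the same Gaussian MGF computation giving the factor $\exp\{-\theta\beta+\theta^2(c_a^2+c_s^2)/(2\mu)\}$, and the same relaxation via $(x-\beta)^+\ge x-\beta$ for the unconditional lower bound.
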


\begin{proof}
The analysis is similar to the one of Proposition~\ref{prop:2LyapunovBound}. From Lemma~\ref{lemma:Ytsummary}(i) we have that on the event $\{\bar {\vct  Y}_{t-1}\notin{\cal R}_{\beta}\}$
\begin{align}
\tilde {\vct  p} \cdot \bar {\vct  Y}_t +  {\vct\alpha} \cdot \bar {\vct  Z}_t
&= \hat \fM_t - \beta + \tilde {\vct  p} \cdot \bar {\vct  Y}_{t-1} + {\vct\alpha} \cdot \bar {\vct  Z}_t \nonumber \\
&= \tilde {\vct  p} \cdot \bar {\vct  Y}_{t-1} +
{\vct\alpha} \cdot \bar {\vct  Z}_{t-1} - \beta + {\vct K} \cdot \hat {\vct Z}_t. \label{eq:2expDecompLimit}
\end{align}
This results in
\begin{align}
\expect &\left[\Phi_\theta(\bar {\vct  Y}_t, \bar {\vct Z}_t)
\ind{\bar {\vct  Y}_{t-1}\notin{\cal R}_{\beta}}\,\big|\, \bar {\vct Y}_{t-1}, \bar {\vct  Z}_{t-1} \right] \notag\\
&\phantom{xxx}= e^{-\theta \beta}\,\expect \left[\exp \left\{\theta {\vct K} \cdot \hat {\vct Z}_t \right\}
\ind{\bar {\vct  Y}_{t-1}\notin{\cal R}_{\beta}}\,
\big|\,  \bar {\vct Y}_{t-1}, \bar {\vct Z}_{t-1} \right]  \,
\Phi_\theta (\bar {\vct  Y}_{t-1}, \bar {\vct Z}_{t-1}) \nonumber \\
&\phantom{xxx}\le e^{-\theta \beta}\,\expect \left[\exp \left\{\theta {\vct K} \cdot \hat {\vct Z}_t \right\}\,
\big|\,  \bar {\vct Y}_{t-1}, \bar {\vct Z}_{t-1} \right]  \,
\Phi_\theta (\bar {\vct  Y}_{t-1}, \bar {\vct Z}_{t-1}) \nonumber \\
&\phantom{xxx}= e^{-\theta \beta}\, \expect \left[ e^{\theta \hat A_t /\mu} \right] \,
\expect \left[\exp \left\{\theta {\vct K} \cdot \hat {\vct \D}_t \right\} \right]  \,
\Phi_\theta (\bar {\vct  Y}_{t-1}, \bar {\vct Z}_{t-1}), \label{eq:EPhiPhi1}
\end{align}
where the last equality follows from the definition of $\hat {\vct
Z}_t$,  mutual independence of $\hat A_t$ and $\hat {\vct \D}_t$ as
well as their independence of $(\bar {\vct  Y}_{t-1}, \bar {\vct
Z}_{t-1})$. By definition, r.v. $\hat A_t$ is normally
distributed with zero mean and variance $\mu c^2_a$ and, hence,
\begin{equation}
\expect \left[e^{\theta \hat A_t /\mu}\right] = e^{\theta^2 c_a^2/(2\mu)}. \label{eq:MAconv}
\end{equation}
On the other hand, $\hat{\vct \D}_t$ is normal with the covariance
matrix $\mu\Sigma = \mu (\text{diag}({\vct p}) - {\vct p}^T{\vct p})$ (see~(\ref{eq:Sigma})) where $\text{diag}(\vct p)$ is the diagonal matrix defined by $\vct p$. Thus 
\begin{align*}
\E \left({\vct K} \cdot \hat {\vct \D}_t\right)^2 &=\mu {\vct K}^T(\text{diag}({\vct p}) - {\vct p}^T{\vct p}) {\vct K} \\
&=\mu\sum_{j=1}^K j^2p_j - \mu \left({\vct K} \cdot {\vct p} \right)^2 =\mu\sigma_s^2,
\end{align*}
which in turn implies
\[
\expect \left[\exp \left\{\theta {\vct K} \cdot  \hat {\vct \D}_t \right\}\right] = e^{\theta^2 c_s^2/(2\mu)}.
\]
Inequality~(\ref{eq:EPhiPhi1}), (\ref{eq:MAconv}) and the preceding equality result in
\begin{equation*}
\expect \left[ \Phi_\theta(\bar {\vct  Y}_t, \bar {\vct Z}_t)
\ind{\bar {\vct  Y}_{t-1}\notin{\cal R}_{\beta}}\, \big|\, \bar {\vct Y}_{t-1}, \bar {\vct  Z}_{t-1} \right] \leq e^{-\theta \beta} e^{\theta^2 c_a^2/(2\mu)} e^{\theta^2 c_s^2/(2\mu)}
\Phi_\theta (\bar {\vct  Y}_{t-1}, \bar {\vct Z}_{t-1}),
\end{equation*}
and (\ref{eq:PhidecreaseLimit}) then follows provided that
\[
-\theta\beta+ \theta^2 c_a^2/(2\mu) + \theta^2 c_s^2/(2\mu)<0,
\]
or equivalently $\theta<\theta^*/\mu$. Therefore, the first part of the proposition is established.

The proof of (\ref{eq:PhiincreaseLimit}) is very similar.  We
observe from Lemma~\ref{lemma:Ytsummary}(i) that
$\hat Y_t \ge \hat V_t - \beta + {\vct  p} \cdot \bar {\vct  Y}_{t-1}$, regardless of
whether $\bar{\vct Y}_{t-1}\in \mathcal{R}_\beta$ or $\bar{\vct
Y}_{t-1} \not\in \mathcal{R}_\beta$. Repeating the analysis for the
previous case ($\theta<\theta^*/\mu$), we obtain
\begin{align*}
\expect \left[\Phi_\theta(\bar {\vct  Y}_t, \bar {\vct Z}_t)
 |\, \bar {\vct Y}_{t-1}, \bar {\vct  Z}_{t-1} \right] &\geq e^{-\theta \beta}\,\expect \left[\exp \left\{\theta {\vct K} \cdot \hat {\vct Z}_t \right\} \, \big|\,  \bar {\vct  Y}_{t-1}, \bar {\vct  Z}_{t-1} \right]  \,
\Phi_\theta (\bar {\vct  Y}_{t-1}, \bar {\vct Z}_{t-1}) \notag \\
&= e^{-\theta \beta + \theta^2 c_a^2/(2\mu) + \theta^2 c_s^2/(2\mu)} \,
\Phi_\theta (\bar {\vct  Y}_{t-1}, \bar {\vct Z}_{t-1});
\end{align*}
thus, (\ref{eq:PhiincreaseLimit}) holds provided that $\theta>\theta^*/\mu$.
This concludes the proof of the proposition.
\end{proof}

The following analogue of Proposition~\ref{prop:LyapunovBoundLimit} is needed to establish our third main result, Theorem~\ref{theorem:MainResultExpErg}.

\begin{prop}\label{prop:LyapunovBoundn}
There exist $\theta>0,0<\delta<1$ and $n_0$, such that for all $n\ge n_0$,
\begin{equation}
\expect \left[\Phi_\theta(\bar {\vct  Y}^n_t, \bar {\vct  Z}^n_t) \,
\ind{\bar {\vct  Y}^n_{t-1}\notin{\cal R}_{\beta_n}} \, \big| \bar {\vct  Y}^n_{t-1},
\bar {\vct  Z}^n_{t-1} \right] \leq (1-\delta) \, \Phi_\theta (\bar {\vct  Y}^n_{t-1}, \bar {\vct  Z}^n_{t-1}).
\label{eq:Phidecreasen}
\end{equation}
\end{prop}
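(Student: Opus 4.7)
The plan is to follow the architecture of Proposition~\ref{prop:LyapunovBoundLimit} at the prelimit level, now using Corollary~\ref{coro:MCscaled} for the conditional independence structure, Lemma~\ref{lemma:Dbound} for the service-side moment generating function uniformly in $n$, and the hypothesis~(\ref{eq:arrival-Aexp}) for the arrival-side one. A crucial simplification relative to Proposition~\ref{prop:LyapunovBoundLimit} is that we do not need to chase the critical exponent $\theta^*/\mu$: since we only require \emph{existence} of some admissible $\theta>0$, any sufficiently small $\theta$ suffices, and the slack between the worst-case bound $\|\vct\D^n_t\|\le n$ and the typical value $\approx n\mu$ becomes harmless.

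First I would observe that the deterministic identity~(\ref{eq:2expDecomp}) already derived inside the proof of Proposition~\ref{prop:2LyapunovBound} gives, on the event $\{\bar{\vct Y}^n_{t-1}\notin{\cal R}_{\beta_n}\}$,
\[
\Phi_\theta(\bar{\vct Y}^n_t,\bar{\vct Z}^n_t) \;=\; \Phi_\theta(\bar{\vct Y}^n_{t-1},\bar{\vct Z}^n_{t-1})\; e^{-\theta\beta_n}\; e^{\theta{\vct K}\cdot \hat{\vct Z}^n_t}.
\]
Taking conditional expectation given $(\bar{\vct Y}^n_{t-1},\bar{\vct Z}^n_{t-1})$ thus reduces the problem to controlling the conditional moment generating function of ${\vct K}\cdot \hat{\vct Z}^n_t$. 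I would then write ${\vct K}\cdot \hat{\vct Z}^n_t = \hat A^n_t/\mu + {\vct K}\cdot \hat{\vct \D}^n_t$ (using ${\vct K}\cdot\vct p = 1/\mu$) and invoke the conditional independence from Corollary~\ref{coro:MCscaled}: $\hat A^n_t$ depends on the past only through $a^n_{t-1}$, while $\hat{\vct \D}^n_t$ is conditionally independent of the past given $\|\vct\D^n_t\|$. This bounds the conditional MGF by the product
\[
\sup_{a\ge 0}\expect\!\bigl[e^{\theta\hat A^n_t/\mu}\mid a^n_{t-1}=a\bigr]\;\cdot\;\max_{0\le k\le n}\expect\!\bigl[e^{\theta{\vct K}\cdot \hat{\vct \D}^n_t}\mid \|\vct\D^n_t\|=k\bigr].
\]

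The arrival factor is bounded by $e^{c\theta^2/\mu^2}$ for all $n$ sufficiently large via assumption~(\ref{eq:arrival-Aexp}) (applied at parameter $\theta/\mu$ together with $x\le|x|$). The service factor is bounded by $(\expect e^{\theta(S-1/\mu)/\sqrt n})^n$ by Lemma~\ref{lemma:Dbound}; a second-order Taylor expansion using $\expect(S-1/\mu)=0$ and boundedness of $S$ identifies this $n$-th power as $\exp\{\theta^2\sigma_s^2/2+o_n(1)\}$ for each fixed $\theta$. Combining these, the conditional expectation of interest is at most $\exp\{-\theta\beta_n+C\theta^2+o_n(1)\}\,\Phi_\theta(\bar{\vct Y}^n_{t-1},\bar{\vct Z}^n_{t-1})$ for an explicit constant $C$. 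Since $\beta_n\to\beta>0$, choosing $\theta>0$ small enough makes the exponent at most $-\theta\beta/2<0$ for all $n\ge n_0$, yielding~(\ref{eq:Phidecreasen}) with $\delta=1-e^{-\theta\beta/2}$. I do not anticipate any serious obstacle; the only step meriting a short calculation is the asymptotic expansion of $(\expect e^{\theta(S-1/\mu)/\sqrt n})^n$, which is routine given that $S$ is bounded.
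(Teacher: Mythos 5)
Your proposal is correct and follows essentially the same route as the paper's proof: the same decomposition via~(\ref{eq:2expDecomp}), the same conditional-independence factorization from Corollary~\ref{coro:MCscaled}, the service-side bound from Lemma~\ref{lemma:Dbound} with the Taylor expansion $(\expect e^{\theta(S-1/\mu)/\sqrt n})^n\to e^{\theta^2\sigma_s^2/2}$, the arrival-side bound from~(\ref{eq:arrival-Aexp}), and the conclusion by taking $\theta$ small so the quadratic term is dominated by $-\theta\beta_n$. Your tracking of the constant as $c\theta^2/\mu^2$ is if anything slightly more careful than the paper's, but the argument is the same.
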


\begin{proof}
Repeating the first steps of the proof of Proposition~\ref{prop:LyapunovBoundLimit}, we obtain
\begin{align}
\expect\Big[ &\Phi_\theta(\bar {\mb  Y}^n_t, \bar {\mb Z}^n_t)
\ind{\bar {\mb  Y}^n_{t-1}\notin{\cal R}_{\beta_n}}\, \big|\, \bar {\mb Y}^n_{t-1}, \bar {\mb  Z}^n_{t-1} \Big] \notag\\
&=e^{-\theta \beta_n}\,\expect \left[\exp \left\{\theta {\vct K} \cdot \hat {\vct Z}^n_t \right\}
\ind{\bar {\mb  Y}^n_{t-1}\notin{\cal R}_{\beta_n}} \, \big|\,  \bar {\mb  Y}^n_{t-1}, \bar {\mb  Z}^n_{t-1} \right]  \,
\Phi_\theta (\bar {\mb  Y}^n_{t-1}, \bar {\mb Z}^n_{t-1}) \notag \\
&\le e^{-\theta \beta_n}\,\expect \left[\exp \left\{\theta {\vct K} \cdot \hat {\vct Z}^n_t \right\} \,
\big|\,  \bar {\mb  Y}^n_{t-1}, \bar {\mb  Z}^n_{t-1} \right]  \,
\Phi_\theta (\bar {\mb  Y}^n_{t-1}, \bar {\mb Z}^n_{t-1}). \label{eq:3EPhiPhi1}
\end{align}
Now, the expectation in~(\ref{eq:3EPhiPhi1}) can be expressed in terms of $\hat A^n_t$ and $\hat {\vct \D}^n_t$ the following way
\begin{equation}
\expect  \left[\exp \left\{\theta {\vct K}\cdot \hat {\vct Z}^n_t \right\} \,
\big|\, \bar {\mb  Y}^n_{t-1}, \bar {\mb  Z}^n_{t-1} \right]
= \expect  \left[\exp \left\{\theta \hat A^n_t /\mu +\theta {\vct K}\cdot \hat {\vct \D}^n_t\right\}
\big| \bar {\mb  Y}^n_{t-1}, \bar {\mb  Z}^n_{t-1}\right]. \label{eq:3AandD}
\end{equation}
The right-hand side can be upper bounded by utilizing: (i) $\hat A^n_t$ is conditionally independent of $\hat {\vct \D}^n_t$ and $(\bar {\mb  Y}^n_{t-1}, \bar {\mb  Z}^n_{t-1})$ given $a_{t-1}$ since the arrival process is renewal, and (ii) $\hat {\vct \D}^n_t$ is conditionally independent of $\hat A^n_t$ and $(\bar {\mb  Y}^n_{t-1}, \bar {\mb  Z}^n_{t-1})$ given $\hat \D^n_t$ (see Corollary~\ref{coro:MCscaled}). These two facts and~(\ref{eq:3AandD}) yield
\begin{align}
\expect \left[\exp \left\{\theta {\vct K}\cdot \hat {\vct Z}^n_t \right\} \big| \bar {\mb  Y}^n_{t-1}, \bar {\mb  Z}^n_{t-1} \right]
&\leq \sup_{a} \expect\left[e^{\theta \hat A^n_t/\mu} \big| a^n_{t-1}=a \right] \,
\max_{0\leq l \leq n} \expect \left[\exp\left\{\theta {\vct K}\cdot \hat {\vct \D}^n_t \right\} \big|
\|{\vct \D}^n_t\|=l \right] \nonumber \\
&\leq \sup_{a} \expect\left[e^{\theta \hat A^n_t/\mu} \big| a^n_{t-1}=a \right]  \,
\left(\expect e^{\theta(S-1/\mu)/\sqrt{n}} \right)^n, \label{eq:24jul003}
\end{align}
where the second inequality follows from Lemma~\ref{lemma:Dbound}.
Using a second order Taylor expansion in conjunction with the observation that service times are bounded $(S \leq K)$, we have
\begin{equation}
\lim_{n\rightarrow\infty}\left(\expect e^{\theta (S - 1/\mu)/\sqrt{n}}\right)^n=
\lim_{n\rightarrow\infty}\left(1+ \frac{\theta^2 \sigma^2_s}{2n}+o(1/n)\right)^n
=e^{\theta^2 \sigma^2_s/2}<\infty. \label{eq:24jul001}
\end{equation}

Finally, from~(\ref{eq:24jul001}), (\ref{eq:24jul003}) and (\ref{eq:3EPhiPhi1})
we conclude that~(\ref{eq:Phidecreasen}) holds for $\theta$ that satisfy
\[
(c + \sigma^2_s/2) \theta^2 - \beta_n \theta < 0,
\]
that, in light of $\beta_n\rightarrow\beta$, is satisfied for sufficiently small $\theta$.
This establishes~(\ref{eq:Phidecreasen}) and concludes the proof.
\end{proof}

The analogue of Lemma~\ref{lemma:inRbound} for the geometric function applied to the $n$th and limiting processes is stated next. The proof is very similar
to that of Lemma~\ref{lemma:inRbound}, except that the fact that
$\hat A_t$ and $\hat{\vct \D}_t$ are normally distributed is utilized in the limiting case.

\begin{lemma} \label{lemma:inRboundLimit}
For every $\theta>0$ the following assertion holds
\[
\expect_{\pi_*}  \left[ \Phi_\theta(\bar {\vct Y}_t, \bar {\vct  Z}_t) \, \ind{\bar {\vct  Y}_{t-1} \in {\cal
R}_{\beta}} \right] < \infty.
\]
Moreover, under the assumption (\ref{eq:arrival-Aexp}), for every $\theta>0$
\[
\limsup_{n\rightarrow\infty}
\expect_{\pi_n}  \left[ \Phi_\theta(\bar {\vct Y}^n_t, \bar {\vct  Z}^n_t) \, \ind{\bar {\vct  Y}^n_{t-1} \in {\cal R}_{\beta_n}} \right] < \infty.
\]
\end{lemma}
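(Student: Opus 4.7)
The plan is to follow the structure of the proof of Lemma~\ref{lemma:inRbound} at the deterministic level, replacing the quadratic envelope there by an exponential one at the probabilistic step. First, decompose ${\cal R}_\beta=\bigcup_{k=1}^K {\cal R}_\beta^k$ with ${\cal R}_\beta^k=\{\vct y:\,y_1\ge\beta,\ldots,y_{k-1}\ge\beta,\,y_k<\beta\}$, reducing matters to bounding the expectation on the event $\{\bar{\vct Y}_{t-1}\in{\cal R}_\beta^k\}$ for each fixed $k$. On this event, iterating Lemma~\ref{lemma:Ytsummary}(i) yields the limiting counterpart of~(\ref{eq:Y<inftyeq1}), expressing $\tilde{\vct p}\cdot\bar{\vct Y}_t$ as a fixed nonnegative linear combination of $\hat\fM_{t-i}$ for $0\le i\le k-1$ together with truncated tails $(\hat Y_{t-i}-\beta)^+$ for $i=k+1,\ldots,K+k-1$.

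I would then reuse Lemma~\ref{lemma:Ytsummary}(iv) in combination with the number-theoretic Lemma~\ref{lemma:number}, applied to two relatively prime indices $i_1,i_2$ with $p_{i_1}p_{i_2}>0$ (which exist because $\sigma_s>0$). Exactly as in the proof of Lemma~\ref{lemma:inRbound}, this produces a fixed integer $m\ge K$ such that on $\{\hat Y_{t-k}<\beta\}$ one has $(\hat Y_{t-k-i})^+\le r+q\sum_{j=k}^{m+k+K}|\hat\fM_{t-j}|$ for every $i\in\{m+1,\ldots,m+K\}$, and Lemma~\ref{lemma:Ytsummary}(iii) then propagates this bound back to the range $i\in\{k+1,\ldots,K+k-1\}$. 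Since $\vct\alpha\cdot\bar{\vct Z}_t$ is itself a fixed linear combination of the entries of $\bar{\vct Z}_t$, one obtains deterministic constants $C_1,C_2<\infty$ and a fixed integer $N=N(K,k,m)$ such that on $\{\bar{\vct Y}_{t-1}\in{\cal R}_\beta^k\}$
\[
\tilde{\vct p}\cdot\bar{\vct Y}_t+\vct\alpha\cdot\bar{\vct Z}_t \le C_1+C_2\sum_{i=0}^{N}\bigl(|\hat\fM_{t-i}|+\|\hat{\vct Z}_{t-i}\|\bigr).
\]

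To convert this deterministic envelope into an exponential-moment estimate, apply H\"older's inequality with $2(N+1)$ equal weights across the summands:
\[
\expect_{\pi_*}\!\left[\Phi_\theta(\bar{\vct Y}_t,\bar{\vct Z}_t)\ind{\bar{\vct Y}_{t-1}\in{\cal R}_\beta^k}\right] \le e^{\theta C_1}\prod_{i=0}^{N}\Bigl(\expect_{\pi_*} e^{2\theta(N+1)C_2|\hat\fM_{t-i}|}\cdot\expect_{\pi_*} e^{2\theta(N+1)C_2\|\hat{\vct Z}_{t-i}\|}\Bigr)^{1/(2(N+1))}.
\]
In the limiting case each factor is finite because $\hat\fM_{t-i}$ and $\hat{\vct Z}_{t-i}$ are fixed linear combinations of the centred Gaussian variables $\hat A_\cdot$ and $\hat{\vct\D}_\cdot$, so all exponential moments are finite (cf.~(\ref{eq:ZDinftyLimit})); stationarity of $\pi_*$ removes the dependence on~$t$, and summing over $k=1,\ldots,K$ gives the first claim.

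For the pre-limit version the deterministic chain transfers verbatim with Lemma~\ref{lemma:Yevo} and Corollary~\ref{coro:Ybound} in place of Lemma~\ref{lemma:Ytsummary}, uniformly in $n$ since $\beta_n$ is bounded. The exponential-moment control must be redone, however, as $\hat A^n$ and $\hat{\vct\D}^n$ are no longer Gaussian. Assumption~(\ref{eq:arrival-Aexp}) supplies the uniform bound $\sup_a\expect[e^{\theta|\hat A^n_t|}\mid a^n_{t-1}=a]\le e^{c\theta^2}$, and Lemma~\ref{lemma:Dbound} together with the calculation in~(\ref{eq:24jul001}) gives $\expect[\exp\{\theta\vct K\cdot\hat{\vct\D}^n_t\}]\le(\expect e^{\theta(S-1/\mu)/\sqrt n})^n\to e^{\theta^2\sigma_s^2/2}$; an analogous bound for each individual scalar linear combination of $\hat{\vct\D}^n_t$ follows by writing $\vct\D^n_t$ conditional on $\|\vct\D^n_t\|$ as a sum of i.i.d.\ categorical indicators and repeating the Taylor-expansion argument. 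Hence $\limsup_n\expect_{\pi_n}[e^{\theta|\hat\fM^n_{t-i}|}]<\infty$ and similarly for $\|\hat{\vct Z}^n_{t-i}\|$, and the same H\"older step yields the $\limsup_n$ bound. The main obstacle is the bookkeeping in the deterministic step: one must verify that the integer $N$ above is genuinely fixed, independent of $t$ and, in the pre-limit case, of $n$, so that a single H\"older application with a fixed number of factors suffices. Once this is arranged, the exponential moments are automatic from Gaussianity in the limit and from~(\ref{eq:arrival-Aexp}) together with Lemma~\ref{lemma:Dbound} at finite $n$.
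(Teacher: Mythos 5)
Your proposal is correct and follows essentially the same route as the paper: decompose ${\cal R}_\beta$ into the sets ${\cal R}_\beta^k$, repeat the deterministic bounding chain of Lemma~\ref{lemma:inRbound} (via Lemma~\ref{lemma:Ytsummary} and Lemma~\ref{lemma:number}) to dominate $\Phi_\theta$ on the event by the exponential of a fixed affine combination of the $|\hat\fM_{t-i}|$'s, and then invoke finiteness of all exponential moments — Gaussianity in the limit, and~(\ref{eq:arrival-Aexp}) with Lemma~\ref{lemma:Dbound} at finite $n$. Your explicit H\"older step is exactly what the paper packages as Proposition~\ref{prop:DinftyLinComb}, and your concern about the integer $N$ is resolved as you suspect, since $m$ depends only on $K$ and the chosen relatively prime indices.
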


\begin{proof}
As in the proof of Lemma~\ref{lemma:inRbound}, it is sufficient to
prove the  statement of the lemma with ${\cal R}_{\beta}$ replaced
with ${\cal R}^k_{\beta_n}$ for an arbitrary $k\in\{1,\ldots,K\}$.
Repeating the steps of the proof of Lemma~\ref{lemma:inRbound}
yields the existence of some positive integer $m$ and constants
$c_i$'s, $d_i$'s such that
\begin{equation*}
\Phi_\theta(\bar {\vct  Y}_t, \bar {\vct  Z}_t) \,\ind{\bar {\vct  Y}_{t-1} \in {\cal R}^k_{\beta}}
\leq \exp \left\{\sum_{i=0}^{m+k+K} (c_i + d_i |\hat \fM_{t-i}|) \right\}.
\end{equation*}
Then the statement of the lemma follows from the definition of $\hat
\fM_t$, the  Gaussian distribution of its components and
Proposition~\ref{prop:DinftyLinComb} in the appendix.

The proof of the second inequality is very similar and  uses~(\ref{eq:arrival-Aexp})
and Lemma~\ref{lemma:Dbound}.
\end{proof}

\section{Proof of Theorem~\ref{theorem:MainResultTightness}}\label{sec:Thm1}

The convergence in the statement of the theorem is established by proving the
tightness of all relevant random variables. Recall that a sequence of r.v.s $\{X_n, n\geq 1\}$ is tight~\cite[p.~87]{Dur05} if for all $\epsilon>0$ there is an $x_\epsilon$ so that
\[
\sup_{n \to \infty} \Pr\left[X_n \not\in (-x_\epsilon, x_\epsilon]\right] \leq \epsilon.
\]

\begin{prop}\label{prop:YthetafinitePrelim}
The sequence $\{\hat Y^n_t, n \ge 1\}$ is tight with respect to the
sequence of probability measures $\{\pi_n, n\geq 1\}$.
\end{prop}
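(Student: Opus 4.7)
The plan is to combine the quadratic Lyapunov drift of Proposition~\ref{prop:2LyapunovBound} with the boundary moment bound of Lemma~\ref{lemma:inRbound} to obtain $\limsup_{n\to\infty}\expect_{\pi_n}[\Psi_1(\bar{\vct Y}^n_t,\bar{\vct Z}^n_t)]<\infty$, and then to convert this into tightness of $\{\hat Y^n_t\}$ by controlling its upper and lower tails separately. First, I would take expectations in Proposition~\ref{prop:2LyapunovBound} under $\pi_n$ and exploit the stationarity identity $\expect_{\pi_n}\Psi_2(\bar{\vct Y}^n_t,\bar{\vct Z}^n_t)=\expect_{\pi_n}\Psi_2(\bar{\vct Y}^n_{t-1},\bar{\vct Z}^n_{t-1})$ to cancel the two $\Psi_2$-terms, with Lemma~\ref{lemma:inRbound} absorbing the ``in-${\cal R}_{\beta_n}$'' remainder. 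A subtle point is that $\Psi_2$ is not a priori known to be $\pi_n$-integrable; this cancellation must be justified by a truncation/approximation argument, which I would package as an auxiliary general result (and reuse later for the geometric Lyapunov function).

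Next I would unpack the resulting bound on $\expect_{\pi_n}\Psi_1$. Under stationary $\pi_n$ the mean of $\hat A^n_t$ vanishes because $\expect A^n_t=\lambda_n$, while the mean of each component of $\hat{\vct\D}^n_t$ vanishes by conditioning on $\|{\vct\D}^n_t\|$ as in Lemma~\ref{lemma:Dbound}; hence $\expect_{\pi_n}[{\vct\alpha}\cdot\bar{\vct Z}^n_t]=0$. Combining with $\sum_{i=1}^K\tilde p_i=\expect S=1/\mu$ and time-stationarity of $\hat Y^n$, this gives
\[
\expect_{\pi_n}\bigl[\Psi_1(\bar{\vct Y}^n_t,\bar{\vct Z}^n_t)\bigr]\;=\;\tilde{\vct p}\cdot\expect_{\pi_n}\bar{\vct Y}^n_t\;=\;\frac{1}{\mu}\expect_{\pi_n}\hat Y^n_t,
\]
so that $\limsup_n\expect_{\pi_n}\hat Y^n_t<\infty$, controlling the positive tail of $\hat Y^n_t$.

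For the negative tail I would invoke Lemma~\ref{lemma:Yevo}: since every term in $\sum_{i=1}^K p_i(\hat Y^n_{t-i}-\beta_n)^+$ is nonnegative, $\hat Y^n_t\ge \hat\fM^n_t$ for $t\ge K$, and therefore $(\hat Y^n_t)^-\le|\hat\fM^n_t|$. The definition of $\hat\fM^n_t$ as a finite linear combination of the $\hat A^n_{t+1-i}$'s and components of the $\hat{\vct\D}^n_{t+1-i}$'s, combined with Lemma~\ref{lemma:Dbound} and~(\ref{eq:arrival-A2}), yields $\sup_n\expect_{\pi_n}|\hat\fM^n_t|<\infty$ via the Cauchy-Schwarz inequality. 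Together with the previous paragraph this gives $\sup_n\expect_{\pi_n}|\hat Y^n_t|<\infty$, and Markov's inequality then delivers the tightness of $\{\hat Y^n_t,n\ge 1\}$ under $\pi_n$.

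The main obstacle I expect is the first step: passing from a one-step drift inequality to a uniform-in-$n$ stationary moment bound without a priori integrability of $\Psi_2$. I would handle this by a standard Foster-Lyapunov truncation argument, exploiting that $\Psi_2$ depends only on a bounded window of the history and that the drift is strictly negative on $\{\Psi_1>\psi/\delta\}$, which, together with the in-${\cal R}_{\beta_n}$ bound, forces any stationary mass of $\Psi_1$ at infinity to vanish.
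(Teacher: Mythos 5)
Your proposal follows the paper's proof almost exactly: drift of the quadratic Lyapunov function (Proposition~\ref{prop:2LyapunovBound}) plus the boundary bound (Lemma~\ref{lemma:inRbound}) give $\limsup_n\expect_{\pi_n}\Psi_1<\infty$ via a truncation theorem (the paper's Theorem~\ref{theorem:BoundsStationaryPolynomial}); the $\vct\alpha\cdot\bar{\vct Z}^n_t$ term is disposed of by Lemma~\ref{lemma:Dbound} and~(\ref{eq:arrival-A2}); and the lower tail is handled by $\hat Y^n_t\ge\hat\fM^n_t$ from Lemma~\ref{lemma:Yevo}. The one substantive omission is in the first step: because $\Psi_1$ is unbounded below, the drift inequality $-\delta\Psi_1+\psi$ alone, even combined with the in-$\mathcal{R}_{\beta_n}$ bound, does not let the truncation argument close. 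Concretely, to apply Fatou's lemma to $G^b-\Psi_2\wedge b$ one must first know that $\expect_{\pi_n}[G^b-\Psi_2\wedge b]$ is bounded above, and the available upper bound is $-\delta\Psi_1\ind{\Psi_1<0}+\psi+H$; hence one needs $\limsup_n\expect_{\pi_n}[-\Psi_1\ind{\Psi_1<0}]<\infty$ as a separate hypothesis (this is the paper's Lemma~\ref{lemma:<0} and condition~(\ref{eq:C0def}) of Theorem~\ref{theorem:BoundsStationaryPolynomial}). Your sketch of the truncation step, which invokes only the drift and the exception-set bound, would therefore fail as stated. The repair is immediate and uses exactly the estimate you already derive for the negative tail of $\hat Y^n_t$: Lemma~\ref{lemma:Yevo} gives $\Psi_1(\bar{\vct Y}^n_t,\bar{\vct Z}^n_t)\ge\sum_{i=1}^K\tilde p_i\hat\fM^n_{t+1-i}+\vct\alpha\cdot\bar{\vct Z}^n_t$, whose negative part has uniformly bounded expectation by Cauchy--Schwarz, Lemma~\ref{lemma:Dbound} and~(\ref{eq:arrival-A2}). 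With that input added, your argument is complete and coincides with the paper's; your exact identity $\expect_{\pi_n}\Psi_1=\expect_{\pi_n}\hat Y^n_t/\mu$ is a harmless variant of the paper's one-sided bound $\hat Y^n_t\le\tilde{\vct p}\cdot\bar{\vct Y}^n_t+\sum_{k\ge2}\tilde p_k(\hat Y^n_{t-k})^-$.
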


\begin{proof}
Theorem~\ref{theorem:BoundsStationaryPolynomial} can be used to bound the sequence $\{\hat Y^n_t, n \ge 1\}$ away from $+\infty$. In order to obtain uniform boundedness away from $-\infty$ we utilize the fact that the negative part of $\hat Y^n_t$ can be upper bounded by $(\hat V^n_t)^-$ according to Lemma~\ref{lemma:Ytsummary}(i).

From Proposition~\ref{prop:2LyapunovBound}, Lemma~\ref{lemma:inRbound},  Lemma~\ref{lemma:<0} and
Theorem~\ref{theorem:BoundsStationaryPolynomial} it follows that
\begin{equation*}
\limsup_{n\rightarrow\infty} \expect_{\pi_n} \Psi_1 (\bar {\vct  Y}^n_t, \bar {\vct  Z}^n_t) < \infty.
\end{equation*}
Applying (\ref{eq:MZ}), (\ref{eq:arrival-A2})  and Lemma~\ref{lemma:Dbound}
we obtain from~(\ref{eq:PsiDef}) in the case $\theta=1$ that
\begin{equation}
\limsup_{n \to \infty} \expect_{\pi_n} \tilde{\vct  p}\cdot \bar {\vct  Y}^n_t < \infty. \label{eq:PR100e1}
\end{equation}
Next, Lemma~\ref{lemma:Yevo} implies $\hat Y^n_t \geq \hat V^n_t$ leading to
\[
(\hat Y^n_t)^- \leq |\hat V^n_t | = \sqrt{(\hat V^n_t)^2},
\]
which combined with (\ref{eq:hatMt}), Lemma~\ref{lemma:Dbound} and~(\ref{eq:arrival-A2}) yields
\begin{equation}
\limsup_{n\to\infty} \expect_{\pi_n} (\hat Y^n_t)^- < \infty. \label{eq:PR100e2}
\end{equation}
Now, in view of $\tilde p_1=1$ we have
\begin{align*}
\hat Y^n_t &=\tilde{\vct  p}\cdot \bar {\vct  Y}^n_t-\sum_{2\le k\le K}\tilde p_k \hat Y^n_{t-k} \\
&\le\tilde{\vct  p}\cdot \bar {\vct  Y}^n_t+\sum_{2\le k\le K}\tilde p_k (\hat Y^n_{t-k})^-,
\end{align*}
and it then follows from (\ref{eq:PR100e1}) and (\ref{eq:PR100e2}) that
\begin{align*}
\limsup_{n\to\infty} \expect_{\pi_n} \hat Y^n_t<\infty.
\end{align*}
This bound together with (\ref{eq:PR100e2}) and Markov inequality implies the tightness of the
sequence $\{\hat Y^n_t, n\geq 1\}$ with respect to the sequence of distributions $\{\pi_n, n\geq 1\}$.
\end{proof}

For the purposes of the proof of Theorem~\ref{theorem:MainResultTightness}  it is convenient to
define a sequence of {\em stationary} random processes
\[
\left\{\hat{\vct \Upsilon}^n_t = \left(\hat Q^n_t,\hat{\vct L}^n_t, \hat A^n_t,
\hat {\vct \D}^n_t, \hat \D^n_t, \hat Y^n_t, a^n_t \right), \, t \in\rR_+ \right\}
\]
indexed by $n$. Assume that  $(\hat Q^n_t, \hat {\vct L}^n_t, a^n_t)$ (or equivalently the
extended process $\hat{\vct \Upsilon}^n_t$)
is  distributed according to $\pi_n$ for all $t \in \rR_+$ (see
Section~\ref{section:MainResults}).

\begin{coro}\label{coro:ProcessTight}
For a fixed $t \geq 0$, the sequence $\{\hat{\vct \Upsilon}^n_t, n \geq 1\}$ is tight with respect to the sequence of probability measures~$\{\pi_n, n\geq 1\}$.
\end{coro}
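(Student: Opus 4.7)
Since tightness of a random vector is equivalent to tightness of each of its coordinates, the plan is to verify tightness of the seven components of $\hat{\vct \Upsilon}^n_t$ one by one, relying on Proposition~\ref{prop:YthetafinitePrelim} and the structural identities derived in Section~\ref{sec:prelim}.

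By Proposition~\ref{prop:YthetafinitePrelim}, $\{\hat Y^n_t\}$ is tight under $\{\pi_n\}$, and the identity $\hat Q^n_t = (\hat Y^n_t-\beta_n)^+$ from~(\ref{eq:Y=Q-beta}), together with $\beta_n\to\beta$, transfers tightness to $\{\hat Q^n_t\}$. For the noise terms, assumption~(\ref{eq:arrival-A2}) yields $\sup_n \expect(\hat A^n_t)^2 < \infty$, and Lemma~\ref{lemma:Dbound} gives an analogous uniform bound on each coordinate of $\hat{\vct \D}^n_t$ (obtained by conditioning on $\|\vct \D^n_t\|$ and using $\|\vct \D^n_t\|\le n$), whence both are tight by Markov's inequality. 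Relation~(\ref{eq:scaledDynamics}) then writes $\hat \D^n_{t+1}= \hat Q^n_t+\hat A^n_{t+1}-\hat Q^n_{t+1}$, so $\{\hat \D^n_t\}$ is tight as well. In the continuous-time stationary renewal setting, the backward recurrence time satisfies $\expect a^n_t = \expect \tau_n^2/(2\expect \tau_n) = (1+c_{a,n}^2)/(2\lambda_n)\to 0$, so $\{a^n_t\}$ is tight (in fact $a^n_t\Rightarrow 0$).

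The only genuinely non-trivial point is tightness of the individual components of $\hat{\vct L}^n_t$. Tightness of $\hat Y^n_t$ alone controls merely the aggregate, since the definition of $\hat Y^n_t$ and (\ref{eq:Y=Q-beta}) give $\sum_k \hat L^n_{t,k} = \hat Y^n_t\wedge \beta_n$, while the pointwise bound $L^n_{t,k}\ge 0$ only yields $\hat L^n_{t,k}\ge -\lambda_n\tilde p_k/\sqrt{n}$, which diverges to $-\infty$. To circumvent this, I would invoke the representation~(\ref{eq:Ljt}) for $\hat L^n_{t,j}$, which is valid for every $t$ in stationarity by iterating~(\ref{eq:Lnhat}). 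It expresses $\hat L^n_{t,j}$ as a fixed (i.e., $n$-independent) linear combination of the quantities $\hat \D^n_{s,\cdot}$, $\hat A^n_s$, and successive differences $\hat Q^n_{s-1}-\hat Q^n_s$ over a finite window of length~$K$ preceding~$t$. Each entry on the right-hand side has already been shown tight under $\pi_n$, so the finite linear combination inherits tightness, completing the componentwise verification.
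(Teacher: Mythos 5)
Your proposal is correct and follows essentially the same route as the paper's proof: componentwise tightness, with Proposition~\ref{prop:YthetafinitePrelim} for $\hat Y^n_t$, the identity~(\ref{eq:Y=Q-beta}) for $\hat Q^n_t$, relation~(\ref{eq:scaledDynamics}) for $\hat \D^n_t$, the iterated form of~(\ref{eq:Lnhat}) (i.e.,~(\ref{eq:Ljt})) for the components of $\hat{\vct L}^n_t$, and the equilibrium mean of the backward recurrence time for $a^n_t$. The only minor variation is your treatment of $\hat{\vct\D}^n_t$, which you handle directly via the uniform conditional second-moment bound of Lemma~\ref{lemma:Dbound} and Chebyshev, whereas the paper argues through $\|\vct\D^n_t\|/(\mu n)\to 1$ and the conditional independence of $\hat{\vct\D}^n_t$ given $\hat\D^n_t$; both are valid and yours is, if anything, slightly more direct.
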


\begin{proof}
The tightness of random variables $\{\hat A^n_t, n\geq 1\}$ follows from (\ref{eq:arrival-A2}) and the tightness of $\{\hat Y^n_t, n\geq 1\}$ is due to
Proposition~\ref{prop:YthetafinitePrelim}. The tightness of $\{\hat
Q^n_t, n\geq 1\}$ then follows from (\ref{eq:Y=Q-beta}), and, thus,
(\ref{eq:scaledDynamics}) implies the tightness of $\{\hat \D^n_t,
n\geq 1\}$. The tightness of $\{\hat \D^n_t, n\geq 1\}$ implies
via~(\ref{eq:hatD}) that $\|{\vct \D}^n_t\|/(\mu n)\rightarrow 1$
with probability~1. Recalling that $\hat {\vct \D}^n_t$ conditional on
$\hat {\D}^n_t$ is independent from all the other r.v.s (Corollary~\ref{coro:MCscaled}), we obtain tightness of the
sequence $\{\hat {\vct \D}^n_t, n\geq 1\}$.  Finally, applying
iteratively~(\ref{eq:Lnhat}) of Corollary~\ref{coro:MCscaled}, we
obtain the tightness of $\hat L^n_{t,K}$, $\hat L^n_{t,K-1}$ and
$\hat L^n_{t,1}$. Tightness of $a^n_t$ follows from the equilibrium assumption of the arrival processes,
which implies that $\E a^n_t = 
(c_{a,n}^2 + 1)/(2\lambda_n) = O(1/n)$, as $n\to\infty$, due to our
assumption $c_{a,n}\rightarrow c_a<\infty$ as $n\rightarrow\infty$.
This completes the proof of the corollary.
\end{proof}

The preceding result implies the weak convergence of $\pi_n$ along some
subsequence $\{n_k, k\ge 1\}$ to some limiting probability measure $\pi_*$~\cite[p.~59]{Bil99}. For now let $\pi_*$ be any such limit measure. Later in this section we establish the uniqueness of $\pi_*$.
Observe that the tightness of $\{{\vct \Upsilon}^n_t, n \geq 1\}$ implies the tightness
of $\{({\vct \Upsilon}^n_t,{\vct \Upsilon}^n_{t+1}), n \geq 1\}$.

\begin{prop}\label{prop:independence} Let $\{{\vct \Upsilon}^n_t, t \in \rZ_+\}$
be in stationarity and suppose
$({\vct \Upsilon}^n_t, {\vct \Upsilon}^n_{t+1}) \Rightarrow (\check{\vct \Upsilon}_t, \check{\vct \Upsilon}_{t+1})$, as $n\to\infty$,
for some $(\check{\vct \Upsilon}_t, \check{\vct \Upsilon}_{t+1})$, where $\check{\vct \Upsilon}_t =
\left(\check Q_t,\check{\vct L}_t, \check A_t, \check {\vct \D}_t, \check \D_t, \check Y_t, \check a_t \right)$. Then the r.v.s $\check A_{t+1}$ and $\check{\vct \Upsilon}_t$ are  independent.
\end{prop}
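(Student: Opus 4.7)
The plan is to show that for every pair of bounded continuous functions $f$ (on the appropriate Euclidean space) and $g:\rR\to\rR$,
\[
\E[f(\check{\vct \Upsilon}_t)\, g(\check A_{t+1})] = \E[f(\check{\vct \Upsilon}_t)]\,\E[g(\check A_{t+1})],
\]
which, by the portmanteau characterization of independence, delivers the claim.

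The key structural input is that the arrival process is renewal and independent of the service times (and of the initial condition). Hence, given the history up to time $t$ (which determines ${\vct \Upsilon}^n_t$), the conditional law of $\hat A^n_{t+1}$ depends only on the backward recurrence time~$a^n_t$. Writing $h_n(a) \triangleq \E[g(\hat A^n_{t+1}) \mid a^n_t = a]$ and $c_n \triangleq \E[g(\hat A^n_{t+1})]$, this renewal property yields
\[
\E[f({\vct \Upsilon}^n_t)\, g(\hat A^n_{t+1})] = \E\bigl[f({\vct \Upsilon}^n_t)\,(h_n(a^n_t) - c_n)\bigr] + c_n\,\E[f({\vct \Upsilon}^n_t)].
\]

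The main technical step is to establish the uniform limit $\sup_{a\ge 0}|h_n(a) - c_n|\to 0$. Assumption~(\ref{eq:arrival-Adistr}) provides uniform-in-$a$ convergence of the conditional cdf of $\hat A^n_{t+1}$ to the cdf of $\sqrt{\mu}\,A$ (the factor $\sqrt{\mu}$ coming from the $\sqrt{n}$ versus $\sqrt{\lambda_n}$ normalizations), while~(\ref{eq:arrival-A2}) furnishes a uniform second-moment bound, hence uniform tightness in $n$ and $a$ via Markov. Combined with uniform continuity of $g$ on a large compact set, these two facts upgrade the cdf convergence to uniform convergence of the conditional expectations, i.e., $\sup_a|h_n(a) - \E[g(\sqrt{\mu}\,A)]|\to 0$; consequently $\sup_a|h_n(a)-c_n|\to 0$ as well.

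With this uniform convergence in hand, the first term on the right-hand side above is bounded by $\|f\|_\infty\sup_a|h_n(a)-c_n|\to 0$. For the second term, along the subsequence realizing the weak limit of $({\vct \Upsilon}^n_t,{\vct \Upsilon}^n_{t+1})$, one has $\E[f({\vct \Upsilon}^n_t)]\to\E[f(\check{\vct \Upsilon}_t)]$ and $c_n\to\E[g(\check A_{t+1})]$ (since $\check A_{t+1}$ is the weak limit of $\hat A^n_{t+1}$ and $g$ is bounded continuous). On the left-hand side, $({\vct \Upsilon}^n_t,\hat A^n_{t+1})$ is a continuous marginal of $({\vct \Upsilon}^n_t,{\vct \Upsilon}^n_{t+1})$, so $\E[f({\vct \Upsilon}^n_t)\,g(\hat A^n_{t+1})]\to \E[f(\check{\vct \Upsilon}_t)\,g(\check A_{t+1})]$. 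Equating the two limits yields the factorization, and hence independence.

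The only real obstacle is the uniform convergence $\sup_a|h_n(a)-c_n|\to 0$; the remaining assembly is a routine weak-convergence argument. A simplification worth noting is that by Corollary~\ref{coro:ProcessTight} one has $\E a^n_t = O(1/n)$, so $a^n_t\to 0$ in probability and $\check a_t = 0$ almost surely; this lets one replace the uniform-in-$a$ step by convergence along $a\to 0$, which some readers may find cleaner.
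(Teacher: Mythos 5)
Your proof is correct and follows essentially the same route as the paper's: both rest on the conditional independence of $\hat A^n_{t+1}$ and $\hat{\vct\Upsilon}^n_t$ given the backward recurrence time $a^n_t$ (the renewal structure of the arrivals), on the uniform-in-$a$ convergence assumption~(\ref{eq:arrival-Adistr}), and on the assumed joint weak convergence to pass to the limit; the only difference is that the paper tests independence with indicators of half-lines at continuity points, which lets it invoke~(\ref{eq:arrival-Adistr}) verbatim, whereas you test with bounded continuous $f,g$ and therefore need the extra (correct, but only sketched) step of upgrading uniform cdf convergence to uniform convergence of $h_n(a)=\E[g(\hat A^n_{t+1})\mid a^n_t=a]$, using the second-moment bound~(\ref{eq:arrival-A2}) for uniform tightness. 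One caveat on your closing remark: replacing the uniform-in-$a$ step by ``convergence along $a\to 0$'' is not actually a simplification, since you would still need control of $h_n(a)$ jointly in $(n,a)$ over the set of $a$-values carrying most of the mass of $a^n_t$, and the hypotheses are stated uniformly in $a$ in any case.
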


\begin{proof}
By~(\ref{eq:arrival-Adistr}) and~(\ref{eq:HW}) it follows that $\check A_{t+1}$ is equal in distribution to $\hat A_{t+1}$. One needs to show that for every real $a$ and $\vct b$
\begin{equation}
\pr[\check A_{t+1}\le a, \check{\vct\Upsilon}_t\le {\vct b}] =
\Pr[\check A_{t+1}\le a] \,\pr[\check{\vct\Upsilon}_t\le {\vct b}],\label{eq:independence0}
\end{equation}
where for the vector case ``$\le$" is interpreted coordinate-wise. Given a multidimensional
r.v. $\vct X$, recall that a vector $\vct x$ is defined
to be a continuity point if $\pr[X_i=x_i]=0$ for every coordinate~$i$; it is known that the set of continuity points is a dense
uncountable set (see~\cite[Sect.~2.9]{Dur05}). Since
distribution functions are right-continuous, it suffices to
establish the identity (\ref{eq:independence0}) for the case when $a$ and $\vct b$ are
continuity points of $\check A_{t+1}$ and ${\vct\Upsilon}_t$
respectively, as in this case, by density property, we can find a
sequence of continuity points $(a_n, {\vct b}_n) \downarrow (a,{\vct
b})$ as $n \to \infty$. Thus, one needs to establish~(\ref{eq:independence0}) with $a$ and ${\vct b}$ being continuity points.

The key to the proof is the observation that, conditional on the
backward recurrence time~$a^n_t$, r.v.s $\hat A^n_{t+1}$ and $\hat{\vct \Upsilon}^n_t$ are independent, i.e.,
\begin{equation*}
\pr_{\pi_n}[\hat A^n_{t+1}\le a, \hat{\vct\Upsilon}^n_t \le {\vct b}] =
\int_0^\infty \pr[\hat A^n_{t+1} \le a\,|\, a^n_t=z]\pr_{\pi_n}
[\hat{\vct\Upsilon}^n_t \le {\vct b}\, |\, a^n_t=z] \, d\pr[a^n_t \leq z]. 
\end{equation*}
By assumption (\ref{eq:arrival-Adistr}) we have
\begin{align*}
\sup_{z\ge 0} \Big|\pr[\hat A^n_{t+1} \le a\,|\, a^n_t=z] - \pr[\hat A_{t+1}\le a]\Big| \le \epsilon,
\end{align*}
for all sufficiently large $n$. Therefore, for all such $n$ the following holds
\begin{align*}
\pr_{\pi_n}[\hat A^n_{t+1}\le a, \hat{\vct\Upsilon}^n_t \le {\vct b}]
&\le (\pr[\check A_{t+1}\le a]+\epsilon)\int_0^\infty \pr_{\pi_n}
[\hat{\vct\Upsilon}^n_t \le {\vct b}\, |\, a^n_t=z] \, d\pr[a^n_t \leq z] \\  
&\le \pr[\check A_{t+1}\le a] \, \pr_{\pi_n}[\hat{\vct\Upsilon}^n_t \leq {\vct b}] + \epsilon.
\end{align*}
Recall that $\vct b$ is a continuity point of $\hat{\vct \Upsilon}^n_t$.
Then the weak convergence  $\hat{\vct\Upsilon}^n_t\Rightarrow
\check{\vct\Upsilon}_t$ implies $\pr_{\pi_n}[\hat{\vct\Upsilon}^n_t\le {\vct b}] \to
\pr[\check{\vct\Upsilon}_t\le {\vct b}]$ as $n\to\infty$, resulting in
\begin{align*}
\limsup_{n\rightarrow\infty}\pr_{\pi_n}[\hat A^n_{t+1}\le a, \hat{\vct\Upsilon}^n_t \le {\vct b}]
\le \pr[\check A_{t+1}\le a] \, \pr[\check{\vct\Upsilon}_t \leq {\vct b}] + \epsilon.
\end{align*}
Similarly we establish
\begin{align*}
\liminf_{n\rightarrow\infty}\pr_{\pi_n}[\hat A^n_{t+1}\le a, \hat{\vct\Upsilon}^n_t \le {\vct b}]
\ge \pr[\check A_{t+1}\le a] \, \pr[\check{\vct\Upsilon}_t \leq {\vct b}] - \epsilon.
\end{align*}
On the other hand, by the assumed weak convergence one has $\Pr_{\pi_n}[\hat A^n_{t+1}\le
a,\hat{\vct\Upsilon}^n_t\le {\vct b}] \to \pr[\check A_{t+1}\le a,\check{\vct\Upsilon}_t\le {\vct b}]$ as $n\to\infty$ since $(a,{\vct b})$ is a continuity point of the vector $(\check A_{t+1},\check{\vct\Upsilon}_t)$. Parameter $\epsilon$
is arbitrary and, hence, the assertion of the proposition follows.
\end{proof}

We developed the necessary tools for proving
Theorem~\ref{theorem:MainResultTightness}.  In the proof, we show the existence of
a weak subsequential limit of $\hat{\vct \Upsilon}^n_t$, as
$n\to\infty$, that must correspond to the stationary distribution of
the Markov chain corresponding to~(\ref{eq:L'}), (\ref{eq:Q'})
and~(\ref{eq:D'}) (Section~\ref{section:MainResults}). In the second
part of the proof we argue that a stationary distribution of this
Markov chain is unique.

\begin{proof}[Proof of Theorem~\ref{theorem:MainResultTightness}]
{\em (Part I.)} By Corollary~\ref{coro:ProcessTight} there exists a
subsequence $\{n_k, k\ge 1\}$ along which a weak convergence $(\hat{\vct
\Upsilon}^{n_k}_t, \hat{\vct \Upsilon}^{n_k}_{t+1}) \Rightarrow (\check {\vct
\Upsilon}_t, \check {\vct \Upsilon}_{t+1})$ as $k\to\infty$ takes
place~\cite[Sect.~2.2]{Dur05} for a fixed $t$ and a pair of random
vectors $\check {\vct \Upsilon}_\cdot = \left(\check Q_\cdot,
\check{\vct L}_\cdot, \check A_\cdot, \check {\vct \D}_\cdot, \check
\D_\cdot, \check Y_\cdot, \check a_\cdot \right)$. The Continuous Mapping
Theorem~\cite[Sect.~2]{Bil99} yields the following weak limits
along $\{n_k, k\ge 1\}$:
\begin{align*}
\left(\hat Q^{n_k}_t + \hat A^{n_k}_{t+1} + \sum_{j=2}^K \hat L^{n_k}_{t,j} - \beta_{n_k} \right)^+
&\Rightarrow \left(\check Q_t + \check A_{t+1} + \sum_{j=2}^K \check L_{t,j} - \beta \right)^+, \\
\left(\hat Q^{n_k}_t + \hat A^{n_k}_{t+1}\right) \wedge \left(\beta_{n_k} - \sum_{j=2}^K \hat L^{n_k}_{t,j} \right)
&\Rightarrow \left(\check Q_t + \check A_{t+1} \right) \wedge \left(\beta - \sum_{j=2}^K \check L_{t,j}\right).
\end{align*}
Then from the preceding and Corollary~\ref{coro:MCscaled} the
following  relations follow for the elements of $\check{\vct
\Upsilon}_t$ and~$\check{\vct \Upsilon}_{t+1}$:
\begin{align*}
\check {\vct  L}_{t+1} &= {\cal T}\{\check {\vct  L}_t\} + \check {\vct \D}_{t+1} + \check \D_{t+1} \vct {p}, \\ 
\check Q_{t+1} &= \left(\check Q_t + \check A_{t+1} + \sum_{j=2}^K \check L_{t,j} - \beta \right)^+,\\ 
\check \D_{t+1} &= \left(\check Q_t + \check A_{t+1} \right) \wedge \left(\beta - \sum_{j=2}^K \check L_{t,j}\right). 
\end{align*}
Now, note that $\hat A^n_t \Rightarrow \hat A_t$ and  $\hat {\vct
\D}^n_t \Rightarrow \hat {\vct \D}_t$ as $n \to\infty$ for every
$t\in\rZ_+$. These weak limits are due to central limit theorems for
renewal processes~\cite[p.~154]{Bil99} and vectors in $\rR^K$~\cite[p.~385]{Bil95}, respectively. Moreover, $\hat A_t$ and $\hat{\vct
\D}_t$ are independent in addition to the independence of $\check
A_{t+1}$ and~$\check {\vct \Upsilon}_t$ (see
Proposition~\ref{prop:independence}). Since $\pi_n$ is the
stationary distribution of $\hat{\vct\Upsilon}^n_t$, we obtain that
the distribution of $\check{\vct\Upsilon}_t$ coincides with a stationary
distribution of the Markov chain specified by~(\ref{eq:L'}), (\ref{eq:Q'}) and~(\ref{eq:D'}).

\vspace{.025in}
{\em (Part II.)} We established in the previous part
that every weak subsequential  limit $\check{\vct\Upsilon}_t$ of $\hat{\vct\Upsilon}^n_t$
is a stationary distribution of the Markov chain $\hat{\vct\Upsilon}_t$ defined by (\ref{eq:L'}), (\ref{eq:Q'}) and (\ref{eq:D'}).
It remains to establish the uniqueness of the
stationary measure $\pi_*$ of $\{\hat{\vct\Upsilon}_t, t\in\rZ_+\}$. The uniqueness
of the limit measure implies also the convergence $\pi_n\Rightarrow \pi_*$, using standard results of weak convergence theory~\cite[p.~59]{Bil99}.

The proof of uniqueness uses the framework of  Harris
chains and Harris recurrence. All of the definitions and results are
adopted from \cite[Ch.~5]{Dur05}. Recall that the Markov chain
$\{(\hat Q_t,\hat{\vct L}_t), t\in\rZ_+\}$ is a Harris chain if one
can identify two (measurable) sets ${\cal A}$, ${\cal B}\subset \rR^{K+1}$ and a
probability measure $\nu$ concentrated on ${\cal B}$ such that for
every ${\vct x} \in \rR^{K+1}$
\[
\sum_{t\ge 0} \Pr[(\hat Q_t,\hat{\vct L}_t) \in {\cal A} \,|\, (\hat Q_0,\hat{\vct L}_0) = {\vct x}]>0,
\]
and there exists $\epsilon>0$ such that for every ${\cal C} \subset {\cal B}$
\begin{equation}
\inf_{x\in {\cal A}} \Pr[(\hat Q_{t+1},\hat{\vct L}_{t+1}) \in {\cal C}\,|\,(\hat Q_t,\hat{\vct L}_t)=
{\vct x}] \geq \epsilon \nu({\cal C}). \label{eq:Harris}
\end{equation}
Moreover, if these conditions hold for some ${\cal B}={\cal A}$, and the Markov chain possesses a stationary distribution, then the Markov chain is also mixing, and as a result
the stationary distribution is unique (see \cite[Theorem~6.8]{Dur05}
and the comment on aperiodicity just preceding it). Note that if
$\pi$ is a stationary distribution of $\{(\hat Q_t,\hat{\vct L}_t),
t\in\rZ_+\}$ then $\pi$ is also a stationary distribution of
$\{(\hat Q_{2Kt},\hat{\vct L}_{2Kt}), t\in\rZ_+\}$. In view of this,
(\ref{eq:Harris}) can be replaced by
\begin{equation}
\inf_{x\in {\cal A}} \Pr[(\hat Q_{t+2K},\hat{\vct L}_{t+2K})
\in {\cal C}\,|\,(\hat Q_t,\hat{\vct L}_t)= {\vct x}] \geq \epsilon \nu({\cal C}). \label{eq:Harris2}
\end{equation}

Thus, our task of proving the uniqueness of the stationary distribution $\pi_*$
is reduced to constructing the set ${\cal A}={\cal B}$ satisfying the assumptions above. For this purpose we set
\begin{align*}
{\cal A} = \left\{{\vct x}\in \rR^{K+1}: x_1=0, \bigvee_{j=2}^{K+1} |x_j| < \beta/K^2 \right\}.
\end{align*}
Namely, $(\hat Q_t, \hat {\vct L}_t) \in {\cal A}$ implies that the
queue length $\hat Q_t$ is equal to 0 and each $\hat L_{t,j}$,
$1\leq j \leq K$, is upper bounded bounded by $\beta/K^2$ in
absolute value. We set ${\cal B}={\cal A}$ and claim that ${\cal A}$
satisfies the requirements when $\nu$ is the uniform distribution on
${\cal A}$. For a pair of positive constants $c,C$ define an event
${\cal U}$ by
\[
{\cal U} = \left\{\bigvee_{i=1}^K \hat A_{t+i} < -C \right\} \cap \left\{\bigvee_{i=K+1}^{2K} |\hat A_{t+i}| < c \right\}
\]
and note that $\Pr[{\cal U}]>0$ due to the Gaussian nature of $\hat A_t$'s.

First, we show that $\Pr[(\hat Q_{t+2K},\hat{\vct L}_{t+2K}) \in
{\cal A} \,|\, (\hat Q_t,\hat{\vct L}_t)={\vct x}]>0$ for every
$\vct x$.  To this end, given~(\ref{eq:L'}), (\ref{eq:Q'}),
(\ref{eq:D'}) and $(\hat Q_t,\hat{\vct L}_t)={\vct x}$, there exists
$C$ large enough so that
\begin{equation}
\Pr\left[\hat Q_{t+K}=0,  \bigvee_{i=1}^K \hat L_{t+K,i} < -\beta \, \bigg| \,{\cal U}\right] > 0. \label{eq:25jul002}
\end{equation}
To verify this claim note that~(\ref{eq:L'}) implies
\begin{equation}
\hat L_{t+K,i} = \sum_{j=i}^K \left(\hat \D_{t+K+i-j,j} + p_j \hat \D_{t+K+i-j} \right), \label{eq:25jul001}
\end{equation}
and that $\Pr[\vee_{i,j=1}^K | \hat \D_{t+i,j}| \leq \epsilon ] >0$ for any $\epsilon>0$ due to the normal distribution. Then, by selecting $C>p_K^{-1} (\hat Q_t + \sum_{i=1}^K |\hat L_{t,i}| + \beta +\epsilon K)$ and $\epsilon$ small enough, recursions~(\ref{eq:Q'}) and~(\ref{eq:D'}) render, on the event ${\cal U} \cap \{\vee_{i,j=1}^K | \hat \D_{t+i,j}| \leq \epsilon\}$, to
\begin{align*}
\hat Q_{t+1} &= 0,\\
\hat \D_{t+1} &= \hat Q_t + \hat A_{t+1} \leq -p_K^{-1} (\beta+\epsilon K),
\end{align*}
leading to $\hat L_{t+1,K} \leq \epsilon -\beta-\epsilon K$ by~(\ref{eq:25jul001}).
Next, on the same event $\hat Q_{t+2}=0$,
$\hat \D_{t+2} = \hat A_{t+2} \leq -p_K^{-1}(\beta+\epsilon K)$,
$\hat L_{t+2,K} \leq - \beta -\epsilon (K-1)$ and $\hat L_{t+2,K-1} \leq -\beta -\epsilon (K-2)$.
Further iteration over the time index and~(\ref{eq:25jul001}) yield~(\ref{eq:25jul002}).

In addition, for $c$ small enough in the definition of $\cal U$, on
event $\{\hat Q_{t+K}=0, \vee_i \hat L_{t+K,i} <-\beta\}$, we have $Q_{t+K+i}=0$ and $\hat J_{t+K+i}=\hat A_{t+K+i}$ for $i=1,\ldots,K$ by a similar argument as above. Then the components $2,\ldots,K+1$ of $(\hat Q_{t+2K},\hat{\vct L}_{t+2K})$ are bounded in absolute value by $\beta/K^2$ provided that
\begin{align}
\hat\D_i' = \sum_{j=i}^{K} \hat\D_{t+2K+i-j,j} \in \left\{ [-\beta/K^2,\beta/K^2] - \sum_{j=i}^{K} p_j \hat A_{t+2K+i-j} \right\}, \label{eq:eventD}
\end{align}
for $i=1,\ldots,K$. We denote by $\cal E$ the conjunction of $\cal
U$ and  the event described by~(\ref{eq:eventD}). Recall that
$\{\hat{\vct \D}_t, t \in \rZ_+\}$ is an i.i.d. sequence of
multivariate Gaussian random vectors, independent from all other
r.v.s, with the covariance matrix
$\Sigma$~(\ref{eq:Sigma}). Thus, $(\hat J_1',\ldots,\hat J_K')$ is a
zero-mean multivariate Gaussian vector with $\expect \hat J'^2_i =
\sum_{j=i}^K (1-p_j)p_j$ and $\expect \hat J'_i \hat J'_j = -
\sum_{k=j}^K p_{k+i-j} p_k$, $i<j$.
In particular it has a continuous positive density everywhere on $\rR^K$.
Namely, assume that $(\hat J'_i, \hat J'_{i+1},\ldots,\hat J'_{K})$ has
a continuous positive density everywhere on $\rR^{K+1-i}$; this assumption
holds for $i=K$ because $p_K>0$. Then, $(\hat J'_{i-1}, \hat J'_i,\ldots,\hat J'_{K})$
has a continuous density everywhere on $\rR^{K+2-i}$ since
\[
\hat\D_{i-1}' = \hat\D_{t+K+i-1,K} + \sum_{j=i-1}^{K-1} \hat\D_{t+2K+i-1-j,j},
\]
$\hat\D_{t+K+i-1,K}$ is independent of $\{\hat {\vct \D}_{t+K+j}, j=i,\ldots,K \}$
and $(\hat J'_i, \hat J'_{i+1},\ldots,\hat J'_{K})$ is a deterministic function of  $\{\hat {\vct \D}_{t+K+j}, j=i,\ldots,K \}$.
Clearly then $\Pr[{\cal E} \,|\, (\hat Q_t,\hat{\vct L}_t) = {\vct x}]>0$ for every $\vct x$.

Second, as in the preceding, by continuity and strict positivity of
the density  of $\hat A_t$ and $\hat{ \D}'_i$, there exists
$\alpha>0$ such that for every set ${\cal C} \subset {\cal A}$
\[
\inf_{{\vct x} \in {\cal A}} \Pr[(\hat Q_{t+2K},\hat{\vct L}_{t+2K}) \in {\cal C}\,|\,(\hat Q_t,\hat{\vct L}_t)={\vct x}] \geq \alpha \nu({\cal C}),
\]
and the requirement (\ref{eq:Harris2}) holds. Thus $\{(\hat
Q_t,\hat{\vct L}_t), t\in \rZ_+\}$  is indeed a Harris chain which
admits a unique stationary distribution. This completes the proof.
\end{proof}

\section{Proof of Theorem~\ref{theorem:MainResultDecayRate}}
\label{sec:Thm2}

This section is devoted to proving our second main result,
Theorem~\ref{theorem:MainResultDecayRate}.  The approach is based on
the results of Section~\ref{section:LyapunovFunction} for the
limiting Markov chain $\{\hat{\vct\Upsilon}_t,\, t \in\rZ_+\}$ in
steady state. The proof utilizes the following preparatory lemma. The operators ``$\leq$" and ``$\geq$" are interpreted element-wise.

\begin{lemma} \label{lemma:Gammabound} Let
\[
\Gamma^T = \begin{bmatrix}
\vct p \\
I \,\,\,\, \vct 0^T
\end{bmatrix},
\]
where $I$ is the $(K-1)\times (K-1)$ identity matrix and $\vct 0$ is a $(K-1)$-dimensional vector of zeros. Then for $t\geq K-1$ and $k\geq 0$
\[
- \bar {\vct \fM}_{t+k} - \beta {\vct B}_{k} \leq \bar {\vct Y}_{t+k} - (\bar{\vct Y}_t)^+ \,\Gamma^k \leq \bar {\vct \fM}_{t+k},
\]
where ${\vct B}_k = (k,(k-1)^+,\ldots,(k-K+1)^+)$ and
\[
\bar {\vct \fM}_t = \left(\sum_{i=t-K+1}^t |\hat V_i|, \sum_{i=t-K+1}^{t-1} |\hat V_i|, \ldots, \sum_{i=t-K+1}^{t-K+1} |\hat V_i| \right).
\]
\end{lemma}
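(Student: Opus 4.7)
The plan is to proceed by induction on $k \geq 0$, treating the two inequalities coordinate by coordinate and exploiting the recursion from Lemma~\ref{lemma:Ytsummary}(i) together with the companion structure of $\Gamma$.

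For the base case $k=0$, note that $\Gamma^0=I$ and $\vct B_0=\vct 0$, so the claim reduces to $-\bar{\vct \fM}_t \leq -(\bar{\vct Y}_t)^- \leq \bar{\vct \fM}_t$ entrywise. The upper bound is trivial since $-(\bar{\vct Y}_t)^- \leq 0$. For the lower bound, the $j$-th component is $-(\hat Y_{t-j+1})^-$; Lemma~\ref{lemma:Ytsummary}(i) gives $\hat Y_s \geq \hat \fM_s$, hence $(\hat Y_s)^- \leq |\hat \fM_s|$, and this single summand already sits inside $(\bar{\vct \fM}_t)_j = \sum_{i=t-K+1}^{t-j+1} |\hat \fM_i|$.

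For the inductive step, write $\vct u_k := (\bar{\vct Y}_t)^+\Gamma^k$, so $\vct u_{k+1} = \vct u_k\Gamma$ has first coordinate $\vct p\cdot \vct u_k$ and its remaining $K-1$ coordinates are the first $K-1$ coordinates of $\vct u_k$. For coordinates $j\geq 2$ of $\bar{\vct Y}_{t+k+1}$, the identity $(\bar{\vct Y}_{t+k+1})_j = (\bar{\vct Y}_{t+k})_{j-1}$ lets me invoke the inductive hypothesis applied to coordinate $j-1$ directly. For the first coordinate, I apply the recursion from Lemma~\ref{lemma:Ytsummary}(i),
\[
\hat Y_{t+k+1} = \hat \fM_{t+k+1} + \vct p\cdot (\bar{\vct Y}_{t+k}-\beta \vct 1)^+,
\]
together with the elementary sandwich $(\hat Y)^+ - \beta \leq (\hat Y-\beta)^+ \leq (\hat Y)^+$, to get
\[
\hat \fM_{t+k+1}-\beta + \vct p\cdot(\bar{\vct Y}_{t+k})^+ \;\leq\; \hat Y_{t+k+1} \;\leq\; \hat \fM_{t+k+1} + \vct p\cdot(\bar{\vct Y}_{t+k})^+.
\]
Subtracting $\vct p\cdot \vct u_k$, I reduce the first-coordinate deviation to $|\hat \fM_{t+k+1}|$ plus a $\vct p$-weighted version of $(\bar{\vct Y}_{t+k})^+ - \vct u_k$ (with an additional $-\beta$ on the lower side). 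Since $\vct u_k\geq 0$ (because $\Gamma\geq 0$ and $(\bar{\vct Y}_t)^+\geq 0$), I can bound $(\bar{\vct Y}_{t+k})^+-\vct u_k$ from above by $\bar{\vct Y}_{t+k}-\vct u_k + (\bar{\vct Y}_{t+k})^-$ and from below by $\bar{\vct Y}_{t+k}-\vct u_k$; the inductive hypothesis controls $\bar{\vct Y}_{t+k}-\vct u_k$ while Lemma~\ref{lemma:Ytsummary}(i) entrywise controls $(\bar{\vct Y}_{t+k})^-$.

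The genuine technical obstacle will be the bookkeeping that shows the accumulated error exactly matches $\bar{\vct \fM}_{t+k+1}$ and $\beta\vct B_{k+1}$. The key inputs I would use are: (i) $\vct p$ is a probability vector, so $\vct p\cdot \bar{\vct \fM}_{t+k}$ is a convex combination of partial sums and is dominated by $(\bar{\vct \fM}_{t+k+1})_1$ after adding $|\hat \fM_{t+k+1}|$; (ii) the companion form of $\Gamma$ (equivalently, $\Gamma^T$ is a stochastic matrix with columns of $\Gamma^k$ summing to $1$), which prevents $\Gamma$-propagation from amplifying errors; and (iii) the identity $(\vct B_{k+1})_j = (\vct B_k)_{j-1}$ for $j\geq 2$ with $(\vct B_{k+1})_1 = k+1$, combined with the inequality $\vct p\cdot\vct B_k + 1 \leq k+1$ that follows from $\sum_j jp_j = 1/\mu \geq 1$ (since $S\geq 1$). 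These three ingredients are what align the abstract matrix recursion with the specific shape of the error vector in the claim.
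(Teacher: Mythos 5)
Your overall strategy is the paper's: induction on $k$, the coordinate shift $(\bar{\vct Y}_{t+k+1})_j=(\bar{\vct Y}_{t+k})_{j-1}$ for $j\ge 2$, the sandwich $x^+-\beta\le(x-\beta)^+\le x^+$ for the first coordinate, and the base case via $\hat Y_s\ge\hat \fM_s$ from Lemma~\ref{lemma:Ytsummary}(i). The lower-bound half of your inductive step closes correctly; your ingredient (iii), which gives $\vct p\cdot\vct B_k\le k$, is exactly what is needed to absorb the extra $-\beta$ into $\beta\vct B_{k+1}$.

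The gap is in the upper-bound half of the inductive step. You write $(\bar{\vct Y}_{t+k})^+-\vct u_k=(\bar{\vct Y}_{t+k}-\vct u_k)+(\bar{\vct Y}_{t+k})^-$ and propose to bound the two pieces separately, the first by the inductive hypothesis ($\le\bar{\vct \fM}_{t+k}$) and the second entrywise by $|\hat \fM_{t+k-j+1}|$ via Lemma~\ref{lemma:Ytsummary}(i). Adding these two bounds double-counts: in any coordinate with $\hat Y_{t+k-j+1}<0$ both pieces are charged an $|\hat \fM|$-contribution, and after taking the $\vct p$-weighted combination the first coordinate at step $k+1$ is bounded only by $(\bar{\vct \fM}_{t+k+1})_1+\sum_{i=t+k-K+1}^{t+k}|\hat \fM_i|$, which strictly exceeds the claimed $(\bar{\vct \fM}_{t+k+1})_1$. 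This nonnegative surplus reappears at every step, so the induction does not close in the stated form; you would only obtain the inequality with $\bar{\vct \fM}_{t+k}$ inflated by a constant factor (harmless for the application in Section~\ref{sec:Thm2}, but not the lemma as stated). The repair is one line and is what the paper does: the inductive upper bound $\vct u_k+\bar{\vct \fM}_{t+k}$ is coordinatewise nonnegative, so monotonicity of $x\mapsto x^+$ gives $(\bar{\vct Y}_{t+k})^+\le\bigl(\vct u_k+\bar{\vct \fM}_{t+k}\bigr)^+=\vct u_k+\bar{\vct \fM}_{t+k}$, hence $(\bar{\vct Y}_{t+k})^+-\vct u_k\le\bar{\vct \fM}_{t+k}$ with no extra term. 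Equivalently, argue coordinatewise: either $\hat Y\ge 0$, in which case $(\bar{\vct Y}_{t+k})^-=0$ and your decomposition reduces to the inductive bound, or $\hat Y<0$, in which case $(\hat Y)^+-u=-u\le 0$ and no $|\hat \fM|$ charge is needed at all. Finally, note that your step for coordinates $j\ge2$ and your ingredient (i) implicitly require the lower summation limit in $\bar{\vct \fM}_{t+k}$ to stay anchored at $t-K+1$ rather than sliding to $t+k-K+1$; this is how the lemma is used in Section~\ref{sec:Thm2}, but it is worth stating explicitly.
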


\begin{remark} \label{remark:Gamma}
Note that $\Gamma^T$ is an irreducible, aperiodic stochastic matrix since $\|{\vct p}\| =1$, $p_K>0$ and there exist relatively prime $i$ and $j$ such that $p_i p_j >0$ (see Section~\ref{sec:GGN}). Therefore, $\Gamma^k \to (\psi^T,\ldots,\psi^T)$ as $k \to \infty$ for some unique probability vector $\psi$.
\end{remark}

\begin{proof}
The proof is by induction over $k$. First, we claim that the statement holds for $k=0$:
\[
- \bar {\vct \fM}_t - \beta {\vct B}_0 \leq \bar {\vct Y}_t - (\bar{\vct Y}_t)^+ \leq \bar {\vct \fM}_t,
\]
or in the scalar form
\[
-\sum_{i=t-K+1}^{t-j} |\hat \fM_i| \leq \hat Y_{t-j} - (\hat Y_{t-j})^+ \leq \sum_{i=t-K+1}^{t-j} |\hat V_i|,
\]
where $j=0,1,\ldots,K-1$. The upper bound is trivial due to the nonnegativity of $|\hat \fM_i|$ for all $i$; the same holds for the lower bound when $\hat Y_{t-j} \geq 0$. The case $\hat Y_{t-j} < 0$ is covered by Lemma~\ref{lemma:Ytsummary}(i) since it implies $\hat Y_{t-j} \geq \hat \fM_{t-j}$. Now, assume that the statement holds for some~$k$ and note that
\begin{align*}
(\bar {\vct Y}_t)^+ \,\Gamma &= \left(\sum_{i=1}^{K} p_i \hat Y_{t+1-i}^+, \hat Y_t^+, \hat Y_{t-1}^+,\ldots,\hat Y_{t-K+2}^+ \right), \\
\bar {\vct \fM}_t \,\Gamma &\leq \left(\sum_{i=t-K+1}^t |\hat V_i|, \sum_{i=t-K+1}^t |\hat V_i|, \sum_{i=t-K+1}^{t-1} |\hat V_i|, \ldots, \sum_{i=t-K+1}^{t-K+2} |\hat V_i| \right).
\end{align*}
Consider the upper bound first. The preceding two relationships, Lemma~\ref{lemma:Ytsummary}(i) and the inductive assumption yield
\begin{align*}
\bar {\vct Y}_{t+k+1} &\leq (\bar {\vct Y}_{t+k})^+ \,\Gamma + (|\hat V_{t+k+1}|,0,\ldots,0) \\
&\leq (\bar {\vct Y}_t)^+ \,\Gamma^{k+1} + \bar {\vct \fM}_{t+k} \Gamma + (|\hat V_{t+k+1}|,0,\ldots,0) \\
&\leq (\bar {\vct Y}_t)^+ \,\Gamma^{k+1} + \bar {\vct \fM}_{t+k+1},
\end{align*}
where $(x-\beta)^+ \leq x^+$ is also used. As far as the lower bound is concerned, the same arguments and $(x-\beta)^+ \geq x^+ -\beta$ result in
\begin{align*}
\bar {\vct Y}_{t+k+1} &\geq (\bar {\vct Y}_{t+k})^+ \, \Gamma - (|\hat V_{t+k+1}| + \beta,0,\ldots,0) \\
&\geq (\bar {\vct Y}_t)^+ \, \Gamma^{k+1} - \bar {\vct \fM}_{t+k} \Gamma - \beta {\vct B}_{k} \Gamma - (|\hat V_{t+k+1}| + \beta,0,\ldots,0) \\
&\geq (\bar {\vct Y}_t)^+ \, \Gamma^{k+1} - \bar {\vct \fM}_{t+k+1} - \beta {\vct B}_{k+1}. \qedhere
\end{align*}
\end{proof}

We conclude this section with the proof of Theorem~\ref{theorem:MainResultDecayRate}.

\begin{proof}[Proof of Theorem~\ref{theorem:MainResultDecayRate}]
Proposition~\ref{prop:LyapunovBoundLimit}, Lemma~\ref{lemma:inRboundLimit} and
Theorem~\ref{theorem:BoundsStationaryGeometric}, where in the theorem each $\Xi^n$ is identified with $\{\hat{\vct\Upsilon}_t, t \in\rZ\}$, $\pi_n=\pi_*$ and $\mathcal{R}_{\beta_n}=\mathcal{R}_{\beta}$, yield
\begin{equation}
\expect_{\pi_*} \Phi_\theta (\bar {\vct  Y}_t, \bar {\vct  Z}_t) < \infty \label{eq:T2Pe1000}
\end{equation}
for every $\theta<\theta^*/\mu$. On the other hand, taking expectation (with respect to $\pi_*$) of both sides of~(\ref{eq:PhiincreaseLimit}) implies
\begin{equation}
\expect_{\pi_*} \Phi_\theta (\bar {\vct  Y}_t, \bar {\vct  Z}_t) = \infty \label{eq:T2Pe1010}
\end{equation}
for every $\theta>\theta^*/\mu$.

Next, the definition~(\ref{eq:PhiDef}) of $\Phi_\theta$ renders $\tilde{\vct  p} \cdot \bar {\vct  Y}_t = \theta^{-1}\log\Phi_\theta (\bar {\vct  Y}_t,\bar {\vct  Z}_t) - {\vct\alpha} \cdot \bar {\vct  Z}_t$. This equality, (\ref{eq:T2Pe1000}), (\ref{eq:T2Pe1010}), the normal distribution of $\bar {\vct Z}_t$ and Proposition~\ref{prop:productDthetaDinfty} of the appendix result in
\begin{equation}
\E_{\pi_*} \exp\{\theta\tilde{\vct  p} \cdot \bar {\vct  Y}_t\} < \infty \label{eq:T2Pe1020}
\end{equation}
for every $\theta<\theta^*/\mu$ while
\begin{equation}
\E_{\pi_*}\exp\{\theta\tilde{\vct  p} \cdot \bar {\vct  Y}_t\} = \infty \label{eq:T2Pe1030}
\end{equation}
for every $\theta>\theta^*/\mu$. Given~(\ref{eq:T2Pe1020}) and~(\ref{eq:T2Pe1030}), in order to complete the proof of the theorem it is sufficient to prove for every $\theta>0$
\begin{equation}
\expect_{\pi_*}  \exp\{\theta |\mu^{-1}\hat Y_t-\tilde{\vct  p} \cdot \bar {\vct  Y}_t| \} < \infty, \label{eq:T2Pe1040}
\end{equation}
or equivalently $|\mu^{-1}\hat Y_t-\tilde{\vct  p} \cdot \bar {\vct  Y}_t| \in \mathcal{M}_\infty$ assuming the stationarity of $\{\hat Y_t,\, t\in \rZ\}$. Informally,~(\ref{eq:T2Pe1040}) implies that the stationary r.v.s $\mu^{-1}\hat Y_t$ and $\tilde{\vct  p} \cdot \bar {\vct  Y}_t$ have the same exponential decay rate.

The rest of the proof is devoted to establishing~(\ref{eq:T2Pe1040}). Given that $\mu^{-1}\hat Y_t=\sum_k\tilde p_k \hat Y_{t}$, by Proposition~\ref{prop:DinftyLinComb} of the appendix
it suffices to show that $|\hat Y_{t}-\hat Y_{t-k}| \in \mathcal{M}_\infty$ for every $k=1,\ldots,K~-~1$ and stationary $\{\hat Y_t, t\in\rZ\}$. Consider an arbitrary such $k$ and note that Lemma~\ref{lemma:Gammabound} renders, for $j \geq 1$ and $t \geq K-1$,
\[
-\bar {\vct V}_{t+j} - \beta {\vct B}_j \leq \bar {\vct Y}_{t+j} - (\bar {\vct Y}_t)^+ \Gamma^j \leq \bar {\vct V}_{t+j}.
\]
Rewriting the preceding relationship in a scalar form renders
\begin{align*}
- \sum_{i=t-K+1}^{t+j} |\hat \fM_i| - j\beta &\leq \hat Y_{t+j} - \sum_{i=0}^{K-1} (\Gamma^j)_{i+1,1} \hat Y_{t-i}^+ \leq \sum_{i=t-K+1}^{t+j} |\hat \fM_i|, \\
- \sum_{i=t-K+1}^{t+j-k} |\hat \fM_i| - (j-k)^+\beta &\leq \hat Y_{t+j-k} -  \sum_{i=0}^{K-1} (\Gamma^{j})_{i+1,k+1} \hat Y_{t-i}^+ \leq \sum_{i=t-K+1}^{t+j-k} |\hat \fM_i|,
\end{align*}
and, hence,
\begin{equation}
|\hat Y_{t+j}-\hat Y_{t+j-k}| \leq \sum_{i=0}^{K-1} |(\Gamma^j)_{i+1,1} - (\Gamma^{j})_{i+1,k+1}| \hat Y_{t-i}^+ + 2\sum_{i=t-K+1}^{t+j} |\hat \fM_i| + 2(j+K+1)\beta. \label{eq:T2Pe1080}
\end{equation}
In view of Remark~\ref{remark:Gamma}, the speed of convergence of $\Gamma^k$ in $k$ is exponential~\cite[p.~211]{Bre99}, i.e., there exist constants $C$ and $\gamma < 1$ such that
\[
\sup_{0 \leq i \leq K-1} |(\Gamma^j)_{i+1,1} - (\Gamma^{j})_{i+1,k+1}| \leq C \gamma^j.
\]
Then, (\ref{eq:T2Pe1080}) and the preceding inequality yield
\begin{equation}
|\hat Y_{t+j}-\hat Y_{t+j-k}| \leq C \gamma^j \sum_{i=0}^{K-1} \hat Y_{t-i}^+ + 2\sum_{i=t-K+1}^{t+j} |\hat \fM_i| + 2(j+K+1)\beta.  \label{eq:T2Pe1090}
\end{equation}
Now, observe that the last two terms on the right-hand side of the
preceding inequality are elements of ${\cal M}_\infty$ due
to~(\ref{eq:MZLimit}), (\ref{eq:ZDinftyLimit}), and
Proposition~\ref{prop:DinftyLinComb}. In addition, from $\hat Y_t =
\tilde{\vct p} \cdot \bar {\vct Y}_t - \sum_{i=2}^K \tilde p_i \hat
Y_{t-i+1}$ (Lemma~\ref{lemma:Ytsummary}(i)), (\ref{eq:T2Pe1020}),
Lemma~\ref{lemma:Ytsummary}(ii) and
Proposition~\ref{prop:DinftyLinComb}, it follows that $\hat
Y_{t}\in\mathcal{M}_{\theta'}$ for some sufficiently small $\theta'
>0$. By stationarity of $\{\hat Y_t, t\in \rZ_+\}$ this applies to
every term in the first sum on the right-hand side
of~(\ref{eq:T2Pe1090}). It then follows that $|\hat Y_{t}-\hat
Y_{t-k}|\in\mathcal{M}_{\theta''}$  with
$\theta''=\gamma^{-j} \theta'/(CK)$. Since $j$ is arbitrary, by taking it sufficiently
large we establish $|\hat Y_{t}-\hat Y_{t-k}|
\in\mathcal{M}_\infty$. This concludes the proof
of~(\ref{eq:T2Pe1040}) and the proof of the theorem.
\end{proof}

\section{Proof of Theorem~\ref{theorem:MainResultExpErg}}
\label{sec:thm3}
Proposition~\ref{prop:LyapunovBoundn}, the second part of Lemma~\ref{lemma:inRboundLimit} and Theorem~\ref{theorem:BoundsStationaryGeometric} from the Appendix imply the statement of the theorem.

\section{Proof of Corollary~\ref{coro:Waiting}}
\label{sec:CorollaryWaiting}
First, we note that for $x \geq 0$, as $n\to\infty$,
\begin{equation}
A^n_{0,x/\sqrt{n}}/\sqrt{n} \to \mu x \label{eq:arrival-AdistrScaleRootn}
\end{equation}
in probability. Let $\{\tau_{n,i}, i\geq 1\}$ be interarrival times in the $n$th system, with $\tau_{n,1}$ being the time of the first arrival after time $t=0$. The limit is based on the following: (i) $\{A_{0,t} \geq k\} = \{\sum_{i=1}^k \tau_{n,i} \leq t\}$ for $t\geq 0$, $k\geq 1$; (ii) for $n$ large enough Markov's inequality yields for $\epsilon>0$
\begin{align*}
\Pr\left[\sum_{i=2}^{\lfloor (\mu x+\epsilon)\sqrt{n} \rfloor} \tau_{n,i} \leq  x/\sqrt{n}\right] &\leq \Pr\left[\sum_{i=2}^{\lfloor (\mu x+\epsilon)\sqrt{n} \rfloor} (\tau_{n,i} - 1/\lambda_n) \leq  -2\epsilon/(\mu \sqrt{n}) \right] \\
& \leq \mu^2 (\mu x+\epsilon) \epsilon^{-2} n^{3/2} \text{Var}(\tau_{n,2}) \to 0
\end{align*}
and, similarly,
\[
\Pr\left[\sum_{i=2}^{\lceil (\mu x-\epsilon) \sqrt{n}\rceil} \tau_{n,i} >  x/\sqrt{n}\right] \to 0
\]
as $n\to\infty$; and (iii) the arrival processes are in stationarity and, thus, $\tau_{n,1}$ has the equilibrium distribution and does not impact~(\ref{eq:arrival-AdistrScaleRootn}).

Second, from the Distributional Little's Law~\cite{HNe71} it follows that $Q^n$ equals in distribution
to the number of arrivals in a renewal process $A^n_t$ during the time interval of length $W^n$ (recall that $\{A^n_t, t\in\rR\}$ is in stationarity), i.e., $Q^n=A^n_{0,W^n}$ in distribution. Then for every $x>0$, the event $\{W^n\le x\}$ implies $\{Q^n\le A^n_{0,x}\}$ and, therefore,
\begin{align*}
\pr_{\pi_n}[\sqrt{n} W^n \le x] &\le \pr_{\pi_n}\left[Q^n \le A^n_{0,x/\sqrt{n}} \right] \\
&=\pr_{\pi_n}\left[Q^n/\sqrt{n} \le A^n_{0,x/\sqrt{n}}/\sqrt{n} \right].
\end{align*}
The distribution of $\hat Q$ is continuous everywhere on $(0,\infty)$ as seen from the presence of $\hat A_{t+1}$ in the expression for $\hat Q_{t+1}$ in (\ref{eq:Q'}). Letting $n\to\infty$ in the preceding inequality and applying~(\ref{eq:arrival-AdistrScaleRootn}) yields
\begin{align*}
\limsup_{n\to\infty} \Pr_{\pi_n} [\sqrt{n} W^n \leq x]
\leq \pr_{\pi_*}[\hat Q\le\mu x].
\end{align*}
Similarly, for every $x>0$, the event $\{W^n>x\}$ implies $\{Q^n\ge A^n_{0,x}\}$, leading to
\begin{align*}
\pr_{\pi_n}[\sqrt{n} W^n>x] &\le \pr_{\pi_n}\left[Q^n\ge A^n_{0,x/\sqrt{n}}\right] \\
&=\pr_{\pi_n}\left[Q^n/\sqrt{n} \ge A^n_{0,x/\sqrt{n}}/\sqrt{n}\right]
\end{align*}
and
\[
\liminf_{n\rightarrow\infty}\pr_{\pi_n}[\sqrt{n}W^n\le x] \ge \pr_{\pi_*}[\hat Q\le \mu x].
\]
The preceding establishes $\pr_{\pi_n}[\sqrt{n}W^n\le x] \to \pr_{\pi_*}[\hat Q\le\mu x]$ as $n\to\infty$ for every $x>0$. The assertion then follows.

\section{Conclusions}
We analyzed a stationary multi-server queue in the Halfin-Whitt
(QED) regime when the service times have a
lattice-valued  distribution with a finite support. Prior analyses
of such systems in steady-state assumed either exponential or
deterministic service times. We described the steady-state distribution of the  appropriately scaled queue length in terms of the steady-state distribution of a
continuous-state Markov chain. One can estimate the steady-state
distribution of this chain either numerically or by simulations. 
Finally, we have established that the large deviations rate of the queue length in steady state is given by $\theta^*=2\beta/(c_a^2+c_s^2)$, where $\beta$ is the extra capacity
parameter of the model and $c_a, c_s$ are the coefficients of
variation of interarrival and service times, respectively.
We conjecture that the expression for $\theta^*$ remains valid for a broad class of service time distributions.

\section*{Acknowledgment}

PM thanks Itay Gurvich and Avishai Mandelbaum for discussions on the QED regime.

\small


\bibliographystyle{plain}

\normalsize

\appendix

\section{Appendix A: Moment generating functions}

Here we list some basic properties of moment generating functions. While these properties are well known, we include the proofs for completeness.

\begin{prop}\label{prop:DinftyLinComb}
Any affine combination of (not necessarily independent) nonnegative
elements of $\mathcal{M}_\infty$ is an element of $\mathcal{M}_\infty$.
\end{prop}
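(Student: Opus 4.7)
The plan is to fix an arbitrary $\theta > 0$ and bound $\E e^{\theta Y}$ from above, where $Y = c_0 + \sum_{i=1}^n c_i X_i$ is the given affine combination with nonnegative $X_i \in \mathcal{M}_\infty$ and real coefficients $c_0, c_1, \ldots, c_n$. Pulling out the deterministic factor $e^{\theta c_0}$ will reduce the task to bounding $\E \prod_{i=1}^n e^{\theta c_i X_i}$. Since the $X_i$'s are not assumed to be independent, I would decouple the product by H\"older's inequality with $n$ equal exponents, which gives
\[
\E \prod_{i=1}^n e^{\theta c_i X_i} \le \prod_{i=1}^n \bigl(\E e^{n \theta c_i X_i}\bigr)^{1/n}.
\]
Each factor can then be controlled according to the sign of $c_i$: if $c_i > 0$, the exponent $n\theta c_i$ is a strictly positive real, so $\E e^{n\theta c_i X_i} < \infty$ by the defining property of $\mathcal{M}_\infty$; if $c_i \le 0$, nonnegativity of $X_i$ forces $n\theta c_i X_i \le 0$, so the corresponding factor is at most $1$. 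Combining these bounds yields $\E e^{\theta Y} < \infty$, and since $\theta > 0$ was arbitrary, $Y \in \mathcal{M}_\infty$.

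There is essentially no technical obstacle in this argument. The only conceptual point worth flagging is that, because the $X_i$'s may be dependent, multiplicativity of expectations is unavailable, and H\"older's inequality is the natural substitute; the nonnegativity hypothesis is used only to make the factors corresponding to $c_i \le 0$ trivially bounded by $1$.
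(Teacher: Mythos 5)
Your argument is correct and is essentially the paper's own proof: the paper decouples the (possibly dependent) terms with the Cauchy--Schwarz inequality and handles $n$ terms by induction, whereas you apply generalized H\"older's inequality with $n$ equal exponents in one step, and both proofs then use nonnegativity to dispose of the factors with nonpositive coefficients. The only cosmetic difference is that the paper states the result for sequences (bounding $\limsup_n$ of each factor), which your bound yields verbatim.
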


\begin{proof}
Given two sequences $\{X^n_1,n\geq 1\}$, $\{X^n_2, n \geq 1\} \in \mathcal{M}_\infty$ of nonnegative r.v.s,
and reals $a$, $a_1$, $a_2$ and $\theta>0$, the Cauchy-Schwarz inequality implies
\begin{equation}
M^2_{a+a_1X^n_1+a_2X^n_2}(\theta)\le e^{\theta a} M_{X^n_1}(2\theta) \, M_{X^n_2}(2\theta). \label{eq:Mtheta1}
\end{equation}
The definition of $\mathcal{M}_\infty$ and nonnegativity of $X^n_j$, $j=1,2$, render $\limsup_{n\rightarrow \infty} M_{X^n_j}(x)<\infty$, for all $x$, positive or negative. It then follows that the $\limsup$ of the product on the right-hand side of~(\ref{eq:Mtheta1}) is finite.
The proof for the general case is obtained by induction.
\end{proof}

\begin{prop}\label{prop:productDthetaDinfty}
Suppose $\{X^n, n\geq 1\}\in\mathcal{M}_\theta$ for some $\theta>0$ and
$\{Y^n, n\geq 1\}\in\mathcal{M}_\infty$. Then $\{X^n+Y^n, n\geq 1\}\in\mathcal{M}_{\theta'}$ for every $\theta'<\theta$.
\end{prop}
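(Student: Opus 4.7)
The plan is to apply H\"older's inequality to decouple $X^n$ and $Y^n$, exploiting the fact that the exponential moment hypothesis on $Y^n$ is available for \emph{every} positive $\theta$, while the one on $X^n$ is only available up to $\theta$. Fix an arbitrary $\theta' < \theta$ and choose conjugate exponents $p = \theta/\theta' > 1$ and $q = p/(p-1) = \theta/(\theta-\theta')$. Then write
\[
\E e^{\theta'(X^n+Y^n)} = \E\!\left[e^{\theta' X^n}\, e^{\theta' Y^n}\right] \leq \bigl(\E e^{\theta' p X^n}\bigr)^{1/p} \bigl(\E e^{\theta' q Y^n}\bigr)^{1/q} = \bigl(\E e^{\theta X^n}\bigr)^{\theta'/\theta} \bigl(\E e^{\theta q Y^n}\bigr)^{1/q}.
\]

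Taking $\limsup$ as $n\to\infty$, the first factor is bounded because $\{X^n,n\geq 1\}\in\mathcal{M}_\theta$, and the second factor is bounded because $\theta q = \theta^2/(\theta-\theta') > 0$ and $\{Y^n,n\geq 1\}\in\mathcal{M}_\infty$ implies $\limsup_n \E e^{\theta q Y^n} < \infty$. Hence $\limsup_n \E e^{\theta'(X^n+Y^n)} < \infty$, which is precisely the statement $\{X^n+Y^n,n\geq 1\}\in\mathcal{M}_{\theta'}$.

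There is no real obstacle here: the only subtlety is the bookkeeping of exponents, namely making sure that after H\"older's inequality the exponent appearing on $X^n$ is exactly $\theta$ (so that membership in $\mathcal{M}_\theta$ can be used) and the exponent on $Y^n$ is some finite positive number (absorbed by $\mathcal{M}_\infty$). The choice $p=\theta/\theta'$ accomplishes both simultaneously. The argument fails at the boundary $\theta'=\theta$, which is why the statement is restricted to $\theta'<\theta$, consistent with the usage of this proposition in the proof of Theorem~\ref{theorem:MainResultDecayRate}.
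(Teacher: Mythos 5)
Your proof is correct and is essentially identical to the paper's own argument: both apply H\"older's inequality with conjugate exponents $p=\theta/\theta'$ and $q=\theta/(\theta-\theta')$ so that the exponent landing on $X^n$ is exactly $\theta$ and the (finite) exponent landing on $Y^n$ is absorbed by the $\mathcal{M}_\infty$ hypothesis. The only blemish is the harmless slip in the final display where $\theta' q$ becomes $\theta q$ on the $Y^n$ factor; since $\{Y^n\}\in\mathcal{M}_\infty$ covers every positive exponent, this does not affect the conclusion.
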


\begin{proof}
Applying H\"{o}lder's inequality $\E[XY]\le (\E[X^p])^{1/p}(\E[Y^q])^{1/q}$ with parameters $p=\theta/\theta'$ and $q=\theta/(\theta-\theta')$, defined from $1/q=1-1/p$, we obtain
\begin{align*}
\E e^{\theta' (X^n+Y^n)} \leq \left(\E e^{\theta X^n} \right)^{\theta'/\theta}
\left(\E e^{\theta Y^n/(\theta-\theta')} \right)^{1-\theta'/\theta}.
\end{align*}
The statement follows.
\end{proof}

\section{Appendix B: Lyapunov functions}

The following definition plays a key role in the proofs of our main results.

\begin{Defi} \label{def:GLF} {\em (Geometric Lyapunov function)}
Let $\Xi = \{\Xi_t,  t\in\rZ_+\}$ be a discrete-time Markov chain
defined on a state space ${\cal X}$, equipped with a $\sigma$-algebra ${\cal F}$.
A function $\Phi:{\cal X}\rightarrow \rR_+$ is defined to be a geometric Lyapunov function for $\Xi$ with a geometric drift size
$0<\delta<1$ and exception set $\mathcal{R}\subset {\cal X}$ if for every $x \in {\cal X}\setminus \mathcal{R}$
\begin{equation*}
\E[\Phi(\Xi_{1})|\Xi_0=x] \leq (1-\delta){\Phi(x)}. 
\end{equation*}

{\em (Quadratic Lyapunov function)}
Under the same setting as above,
a function $\Psi:{\cal X}\rightarrow \rR$ is defined to be a quadratic Lyapunov function for $\Xi$ with exception set $\mathcal{R}\subset {\cal X}$ and parameters $\delta>0$, $0\le \psi<\infty$ if for every $x \in {\cal X}\setminus \mathcal{R}$
\begin{equation*}
\E[\Psi^2(\Xi_{1})|\Xi_0=x]-{\Psi^2(x)} \leq -\delta\Psi(x)+\psi. 
\end{equation*}
\end{Defi}

Informally, the following result shows that if a sequence of Markov chains
admits the same geometric Lyapunov function that is uniformly bounded in expectation in the
exception region, then this function is uniformly bounded in expectation in general.
Our definition of a geometric Lyapunov function as well as the following result is fairly standard~\cite{MTw93,GaZ06}.

\begin{theorem}\label{theorem:BoundsStationaryGeometric}
Let  $\{\Xi^n, n\ge 1\}$ be a sequence of discrete-time Markov chains with ${\cal X}_n$ and $\pi_n$ being the state space and a stationary distribution of $\Xi^n$, respectively. Suppose for every $n \geq 1$ function $\Phi: \cup {\cal X}_n \to \rR_+$ is a geometric Lyapunov function for $\Xi^n$ with drift $\delta$ and exception set $\mathcal{R}_n \subset {\cal X}_n$. If
\begin{equation}
C_{\mathcal{R}}\triangleq\limsup_{n\rightarrow \infty}\E_{\pi_n}\left[\Phi(\Xi^n_1) \ind{\Xi^n_{0}\in\mathcal{R}_n}\right]<\infty \label{eq:CRdef}
\end{equation}
then
\begin{align*}
\limsup_{n\rightarrow \infty} \E_{\pi_n}[\Phi(\Xi^n_1)] \leq C_{\mathcal{R}}/\delta.
\end{align*}
\end{theorem}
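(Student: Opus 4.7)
The plan is to set up a self-consistency inequality for $v_n \triangleq \E_{\pi_n}[\Phi(\Xi^n_1)]$ by combining the drift condition with the fact that $\pi_n$ is stationary, and then simply rearrange to isolate $v_n$. The approach mirrors the standard Foster-Lyapunov argument, now applied uniformly across $n$.

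First I would split the expectation according to whether $\Xi_0^n$ lies inside or outside the exception set:
\begin{equation*}
\E_{\pi_n}[\Phi(\Xi^n_1)] = \E_{\pi_n}[\Phi(\Xi^n_1)\ind{\Xi^n_0\in\mathcal{R}_n}] + \E_{\pi_n}[\Phi(\Xi^n_1)\ind{\Xi^n_0\notin\mathcal{R}_n}].
\end{equation*}
The first summand is exactly $C_\mathcal{R}^{(n)} \triangleq \E_{\pi_n}[\Phi(\Xi^n_1)\ind{\Xi^n_0\in\mathcal{R}_n}]$, which is controlled by hypothesis. For the second summand, condition on $\Xi^n_0$ and apply the Lyapunov drift: for $x\notin\mathcal{R}_n$ one has $\E[\Phi(\Xi^n_1)\mid\Xi^n_0=x]\le (1-\delta)\Phi(x)$, giving the pointwise bound $(1-\delta)\Phi(\Xi^n_0)\ind{\Xi^n_0\notin\mathcal{R}_n}\le (1-\delta)\Phi(\Xi^n_0)$. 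Integrating and using stationarity of $\pi_n$ to identify $\E_{\pi_n}\Phi(\Xi^n_0)=\E_{\pi_n}\Phi(\Xi^n_1)=v_n$ produces the key one-line inequality $v_n \le C_\mathcal{R}^{(n)} + (1-\delta) v_n$.

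Assuming $v_n<\infty$, one subtracts $(1-\delta)v_n$ from both sides to obtain $v_n \le C_\mathcal{R}^{(n)}/\delta$, and taking $\limsup$ in $n$ yields $\limsup_n v_n \le C_\mathcal{R}/\delta$, as required. The main obstacle is the validity of the rearrangement when $v_n=\infty$, since the bound $\infty\le C_\mathcal{R}^{(n)}+\infty$ is vacuous. This is handled by a truncation: apply the above argument to $\Phi_L \triangleq \Phi\wedge L$ with the enlarged exception set $\mathcal{R}_n^L \triangleq \mathcal{R}_n \cup\{\Phi>L\}$. For $x\notin\mathcal{R}_n^L$ one still has $\E[\Phi_L(\Xi^n_1)\mid\Xi^n_0=x]\le\E[\Phi(\Xi^n_1)\mid\Xi^n_0=x]\le(1-\delta)\Phi(x)=(1-\delta)\Phi_L(x)$, so $\Phi_L$ is a bona-fide geometric Lyapunov function with drift $\delta$ and a finite mean under $\pi_n$ (being bounded by $L$). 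The expectation over $\mathcal{R}_n^L$ splits into an $\mathcal{R}_n$-piece bounded by $C_\mathcal{R}^{(n)}$ and an $\{\Phi>L\}$-piece which, after passing $L\to\infty$ via monotone convergence on both sides, vanishes; this delivers both finiteness of $v_n$ and the desired limiting bound.
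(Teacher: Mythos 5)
Your opening move --- splitting on $\{\Xi^n_0\in\mathcal{R}_n\}$, applying the drift off $\mathcal{R}_n$, and using stationarity of $\pi_n$ to obtain $v_n \le C^{(n)}_{\mathcal{R}} + (1-\delta)v_n$ --- is the right self-consistency inequality and is also the engine of the paper's argument. The gap is in the truncation you use to rule out $v_n=\infty$. With $\Phi_L=\Phi\wedge L$ and $\mathcal{R}^L_n=\mathcal{R}_n\cup\{\Phi>L\}$ your rearrangement gives
\[
\delta\,\E_{\pi_n}[\Phi_L(\Xi^n_1)] \;\le\; \E_{\pi_n}\!\left[\Phi_L(\Xi^n_1)\ind{\Xi^n_0\in\mathcal{R}_n}\right] + \E_{\pi_n}\!\left[\Phi_L(\Xi^n_1)\ind{\Phi(\Xi^n_0)>L,\ \Xi^n_0\notin\mathcal{R}_n}\right],
\]
and you assert that the second term vanishes as $L\to\infty$ by monotone convergence. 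It does not follow: the integrand is not monotone in $L$ ($\Phi_L(\Xi^n_1)$ increases while $\ind{\Phi(\Xi^n_0)>L}$ decreases), and the only a priori bound on that event is $\Phi_L(\Xi^n_1)\le L = \Phi_L(\Xi^n_0)$, which yields $L\,\Pr_{\pi_n}[\Phi(\Xi^n_0)>L]$ --- a quantity of exactly the same order as $\E_{\pi_n}[\Phi_L(\Xi^n_0)]$ and one that need not tend to $0$ unless $\E_{\pi_n}\Phi(\Xi^n_0)<\infty$ is already known, which is precisely the point at issue. (A tail $\Pr[\Phi(\Xi^n_0)>t]\asymp t^{\delta-1}$ has infinite mean yet is consistent with every inequality your truncation produces.) Using the drift on $\{\Phi>L\}\setminus\mathcal{R}_n$ instead gives the bound $(1-\delta)\E_{\pi_n}[\Phi(\Xi^n_0)\ind{\Phi(\Xi^n_0)>L}]$, which is infinite in exactly the case you are trying to exclude. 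So the argument is circular at the one hard step.

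The paper closes this gap with a different truncation: it truncates the \emph{forward} value, setting $G^b(x)=\E[\Phi(\Xi^n_1)\wedge b\mid\Xi^n_0=x]$ and $H(x)=\E[\Phi(\Xi^n_1)\ind{\Xi^n_0\in\mathcal{R}_n}\mid\Xi^n_0=x]$, and observes that $G^b(x)-\Phi(x)\wedge b\le H(x)$ for \emph{every} $x$ (treat $\Phi(x)<b$ and $\Phi(x)\ge b$ separately). By stationarity $\E_{\pi_n}[G^b(\Xi^n_0)]=\E_{\pi_n}[\Phi(\Xi^n_0)\wedge b]$, so the left-hand side has expectation exactly $0$ for every $b$; the reverse Fatou lemma with the integrable majorant $H$ then gives $\E_{\pi_n}[G(\Xi^n_0)-\Phi(\Xi^n_0)]\ge 0$, which combined with the pointwise bound $G-\Phi\le-\delta\Phi+H$ forces $\delta\,\E_{\pi_n}\Phi(\Xi^n_0)\le\E_{\pi_n}H(\Xi^n_0)<\infty$, delivering finiteness and the bound simultaneously. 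You should either adopt this route or supply a genuinely non-circular justification for letting $L\to\infty$ in your version.
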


\begin{remark} Note that the uniqueness of a stationary distribution $\pi_n$ is not assumed. The theorem holds for  {\em every} sequence of stationary distributions.
\end{remark}

\begin{remark} Our treatment of the geometric Lyapunov function is unconventional. Typically it is assumed that in the exception region the jumps $\Phi(\Xi^n_1)-\Phi(\Xi^n_0)$ are deterministically bounded, e.g., see~\cite{MTw93}. The intuition behind our result is as follows. The expected value of the Lyapunov function is uniformly bounded (in~$n$) since (i) when the chain is in the exception region $\Phi$ is bounded by assumption (in the next time step), and (ii) when the chain is outside of the exception region there is a downward uniform drift decreasing the expected value of $\Phi$.
\end{remark}

\begin{proof}
The proof is similar to the approach taken in~\cite{GaZ06}, and it is based on the Monotone Convergence Theorem.
Assumption~(\ref{eq:CRdef}) implies the existence of $n_0$ such that $\E_{\pi_n}[\Phi(\Xi^n_1) \ind{\Xi^n_0 \in \mathcal{R}_n}]<\infty$ for all $n>n_0$. Fix an arbitrary such  $n$, introduce the following two conditional expectations
\begin{align*}
G^{b} (x) &\triangleq \expect  \left[\Phi(\Xi^n_1)\wedge b \, \big | \,   \Xi^n_0 = x\right], \\
H  (x) &\triangleq \expect \left[\Phi (\Xi^n_1) \ind{\Xi^n_0 \in {\cal R}_{n}} \, \big | \,   \Xi^n_0 = x \right]
\end{align*}
and let $G(x) = G^{\infty}(x)$ for notational simplicity. Then, by the Lyapunov nature of $\Phi$,
the difference of $G (x)$ and $\Phi (x)$ for $x \in {\cal X}_n$ can be bounded as
\[
G (x) - \Phi (x) \leq
\begin{cases}
-\delta  \Phi (x), & x \in {\cal X}_n \setminus{\cal R}_{n}, \\
H (x) - \Phi (x), & x \in {\cal R}_{n},
\end{cases}
\]
the second case being in fact the equality. Due to the nonnegativity of $H(\cdot)$ and $\Phi(\cdot)$, the two cases in the preceding inequality can be combined into
\begin{equation}
G (x) - \Phi (x) \leq - \delta  \Phi (x) + H (x), \label{eq:G-Phi-bound1}
\end{equation}
for all $x \in {\cal X}_n$; recall that $0<\delta<1$ by Definition~\ref{def:GLF}.  Furthermore, the preceding inequality,  $G^b (x) \leq b$ (by definition) and the nonnegativity of $H (\cdot)$ yield
\begin{equation}
G^b (x) - \Phi (x) \wedge b \leq H (x), \label{eq:G-Phi-bound2}
\end{equation}
$x \in {\cal X}_n$; the validity of the inequality can be verified by considering separately the cases $\Phi (x)<b$ and $\Phi (x) \ge b$. Then, (\ref{eq:G-Phi-bound2}) implies
\begin{equation}
\E_{\pi_n}[G^b (\Xi^n_0) - \Phi (\Xi^n_0) \wedge b] \leq \E_{\pi_n}[H (\Xi^n_0)]<\infty, \label{eq:G-Phi-bound3}
\end{equation}
where the strict inequality is due to the choice of $n>n_0$.

Now, the Monotone Convergence Theorem renders $\{G^b (x) - \Phi  (x) \wedge b \}\to \{G (x) - \Phi  (x)\}$ as $b\to\infty$ for every $x\in{\cal X}_n$. Using the Fatou's lemma, applicable due to~(\ref{eq:G-Phi-bound3}) (see also~\cite[p.~44]{CHU74}), we obtain
\begin{align}
\expect_{\pi_n}  \left[ G  (\Xi^n_0)  - \Phi  (\Xi^n_0)  \right] &= \expect_{\pi_n}
\left[ \lim_{b \to \infty} \left\{G ^{b} (\Xi^n_0)  -
\Phi  (\Xi^n_0) \wedge b \right\}\right] \nonumber\\
&\geq \limsup_{b \to \infty} \expect_{\pi_n}  \left[ G ^{b} (\Xi^n_0)  -
\Phi  (\Xi^n_0) \wedge b \right]= 0, \label{eq:EG-Phi}
\end{align}
where the last equality follows from the stationary nature of the distribution $\pi_n$.

Finally, (\ref{eq:G-Phi-bound1}) and~(\ref{eq:EG-Phi}) result in
\[
-\delta  \expect_{\pi_n} \Phi (\Xi^n_0)  +  \expect_{\pi_n} H  (\Xi^n_0) \geq 0,
\]
and the conclusion of the theorem follows since this inequality holds for every $n>n_0$.
\end{proof}

\begin{theorem}\label{theorem:BoundsStationaryPolynomial}
Let  $\{\Xi^n, n\ge 1\}$ be a sequence of discrete-time Markov chains with ${\cal X}_n$ and $\pi_n$ being the state space and a stationary distribution of $\Xi^n$, respectively. Suppose for every $n \geq 1$ function $\Psi: \cup {\cal X}_n \to \rR$ satisfies
\begin{equation}
\expect \left[\Psi^2(\Xi^n_1) \ind{\Xi^n_0 \not\in {\cal R}_n} - \Psi^2(\Xi^n_0) \,\big|\, \Xi_0^n \right] \leq -\delta \Psi(\Xi^n_0) + \psi \label{eq:QLFcond}
\end{equation}
for some $\delta>0$, $0\leq \psi<\infty$ and $\mathcal{R}_n \subset {\cal X}_n$. If
\begin{equation}
C_{\mathcal{R}}\triangleq\limsup_{n\rightarrow \infty}\E_{\pi_n}\left[\Psi^2(\Xi^n_1) \ind{\Xi^n_{0}\in\mathcal{R}_n}\right]<\infty \label{eq:CRdef2}
\end{equation}
and
\begin{equation}
C_{0}\triangleq\limsup_{n\rightarrow \infty}\E_{\pi_n}\left[-\Psi(\Xi^n_0) \ind{\Psi(\Xi^n_{0}) < 0}\right]<\infty \label{eq:C0def}
\end{equation}
then
\[
\limsup_{n\rightarrow \infty} \E_{\pi_n} \Psi(\Xi^n_1) \leq (C_{\mathcal{R}} + C_0 +\psi)/\delta.
\]
\end{theorem}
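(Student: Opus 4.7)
The strategy parallels the proof of Theorem~\ref{theorem:BoundsStationaryGeometric}: combine a truncation/Fatou argument with the drift inequality and the stationarity of $\pi_n$. The two new features are that $\Psi$ is signed, so one must control the positive and negative parts separately, and that the drift is stated for $\Psi^2$ rather than for $\Phi$ itself. The hypothesis $C_0<\infty$ serves precisely to guarantee integrability of the negative part of $\Psi$, without which the signed expectation $\E_{\pi_n}\Psi$ may not be well defined.

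I would first record a ``side constraint'' inside the exception set: evaluating the drift hypothesis at any $x\in\mathcal{R}_n$ (so that $\ind{\Xi^n_0\notin\mathcal{R}_n}$ vanishes) gives $-\Psi^2(x)\le -\delta\Psi(x)+\psi$, i.e., $\delta\Psi(x)\le\Psi^2(x)+\psi$ for all $x\in\mathcal{R}_n$. Define $G^b(x)\triangleq \E[\Psi^2(\Xi^n_1)\wedge b\mid \Xi^n_0=x]$ and $H(x)\triangleq \E[\Psi^2(\Xi^n_1)\mid \Xi^n_0=x]\ind{x\in\mathcal{R}_n}$. A case analysis on whether $x\in\mathcal{R}_n$ and whether $\Psi^2(x)\le b$ yields the universal pointwise bound
\[
G^b(x)-\Psi^2(x)\wedge b \;\le\; \bigl(-\delta\Psi(x)+\psi\bigr)\ind{\Psi^2(x)\le b}+H(x).
\]
The two sub-cases with $x\in\mathcal{R}_n$ use the side constraint together with $G^b(x)\le G(x)=H(x)$, while the two with $x\notin\mathcal{R}_n$ use the drift directly, complemented by the trivial bound $G^b\le b$ when $\Psi^2(x)>b$.

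Integrating against $\pi_n$ and invoking stationarity gives $\E_{\pi_n}G^b(\Xi^n_0)=\E_{\pi_n}[\Psi^2(\Xi^n_1)\wedge b]=\E_{\pi_n}[\Psi^2(\Xi^n_0)\wedge b]$, so the integrated left-hand side vanishes; combined with $\E_{\pi_n}H(\Xi^n_0)=\E_{\pi_n}[\Psi^2(\Xi^n_1)\ind{\Xi^n_0\in\mathcal{R}_n}]$ this produces
\[
\delta\,\E_{\pi_n}\!\bigl[\Psi(\Xi^n_0)\,\ind{\Psi^2(\Xi^n_0)\le b}\bigr]\;\le\;\psi+\E_{\pi_n}\!\bigl[\Psi^2(\Xi^n_1)\ind{\Xi^n_0\in\mathcal{R}_n}\bigr],
\]
uniformly in $b$ and for all $n$ large enough that the right-hand side is finite (which holds by the $C_{\mathcal{R}}$ hypothesis).

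Finally I would let $b\to\infty$, splitting $\Psi\,\ind{\Psi^2\le b}=\Psi_+\ind{\Psi^2\le b}-\Psi_-\ind{\Psi^2\le b}$ and applying monotone convergence to each part. The assumption $C_0<\infty$, i.e.\ $\limsup_n\E_{\pi_n}\Psi_-(\Xi^n_0)\le C_0$, makes the negative-part limit finite, so the signed expectation $\E_{\pi_n}\Psi(\Xi^n_0)$ is well defined (and its positive tail is automatically finite because the right-hand side is). Passing to $\limsup_n$ and using stationarity $\E_{\pi_n}\Psi(\Xi^n_1)=\E_{\pi_n}\Psi(\Xi^n_0)$ then gives $\delta\limsup_n\E_{\pi_n}\Psi(\Xi^n_1)\le\psi+C_{\mathcal{R}}\le\psi+C_{\mathcal{R}}+C_0$, which is the claimed bound. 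The principal obstacle is that the naive ``cancel $\E\Psi^2$'' computation is not legitimate without a priori control of $\E_{\pi_n}\Psi^2$; the truncation sidesteps this, and the $C_0$ hypothesis is exactly what legitimizes the subsequent limit interchange for the signed $\E_{\pi_n}\Psi$.
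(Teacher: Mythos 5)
Your proposal is correct and follows essentially the same route as the paper's proof: truncate $\Psi^2(\Xi^n_1)$ at level $b$ via $G^b$, use stationarity so that $\E_{\pi_n}\left[G^b(\Xi^n_0)-\Psi^2(\Xi^n_0)\wedge b\right]=0$, invoke the drift bound off $\mathcal{R}_n$ together with the ``side constraint'' $-\Psi^2(x)\le-\delta\Psi(x)+\psi$ on $\mathcal{R}_n$, and let $b\to\infty$ by monotone convergence, with $C_0<\infty$ supplying the integrability of the negative part of $\Psi$. The only (harmless, arguably tidier) difference is that you keep the truncation indicator $\ind{\Psi^2(x)\le b}$ attached to the drift term and pass to the limit there, whereas the paper first applies a reverse-Fatou step to $G^b-\Psi^2\wedge b$ to conclude $\E_{\pi_n}[G-\Psi^2]\ge 0$ and then integrates the untruncated drift inequality.
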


\begin{remark}
A non-standard part of our definition of the quadratic Lyapunov function is allowing $\Psi$ to be negative. Our second result in this section shows that if a sequence of Markov chains admits the same quadratic Lyapunov function that is uniformly bounded in expectation in the exception region, then the (linear part of this) function is uniformly bounded away from $+\infty$.
\end{remark}

\begin{proof}
The proofs of Theorems~\ref{theorem:BoundsStationaryGeometric} and~\ref{theorem:BoundsStationaryPolynomial} are similar. Assumptions~(\ref{eq:CRdef2}) and~(\ref{eq:C0def}) imply the existence of $n_0$ such that $\E_{\pi_n}[\Psi^2(\Xi^n_1) \ind{\Xi^n_0 \in \mathcal{R}_n} - \Psi(\Xi^n_0) \ind{\Psi(\Xi^n_0)<0}] <\infty$ for all $n>n_0$. Fix an arbitrary such  $n$, introduce the following two conditional expectations
\begin{align*}
G^{b} (x) &\triangleq \expect  \left[\Psi^2(\Xi^n_1)\wedge b \, \big | \,   \Xi^n_0 = x\right], \\
H  (x) &\triangleq \expect \left[\Psi^2 (\Xi^n_1) \ind{\Xi^n_0 \in {\cal R}_{n}} \, \big | \,   \Xi^n_0 = x \right]
\end{align*}
and let $G(x) = G^{\infty}(x)$ for notational simplicity. Then, by~(\ref{eq:QLFcond})
the difference of $G (x)$ and $\Psi^2 (x)$ for $x \in {\cal X}_n$ can be bounded as
\begin{equation}
G (x) - \Psi^2 (x) \leq -\delta  \Psi (x) + \psi + H (x). \label{eq:G-Psi-bound1}
\end{equation}
Furthermore, the preceding inequality,  $G^b (x) \leq b$ (by definition) and the nonnegativity of $H (\cdot)$ yield
\begin{equation}
G^b (x) - \Psi^2 (x) \wedge b \leq -\delta  \Psi (x) \ind{\Psi(x)<0}+ \psi + H (x), \label{eq:G-Psi-bound2}
\end{equation}
$x \in {\cal X}_n$; the validity of the inequality can be verified by considering separately the cases $\Psi^2 (x)<b$ and $\Psi^2 (x) \ge b$. Then, (\ref{eq:G-Psi-bound2}) implies
\begin{equation}
\E_{\pi_n}[G^b (\Xi^n_0) - \Phi (\Xi^n_0) \wedge b] \leq \delta \expect_{\pi_n}\left[-\Psi(\Xi^n_0) \ind{\Psi(\Xi^n_0)<0}\right] + \psi+ \E_{\pi_n}[H (\Xi^n_0)]<\infty, \label{eq:G-Psi-bound3}
\end{equation}
where the strict inequality is due to the choice of $n>n_0$.

Now, the Monotone Convergence Theorem renders $\{G^b (x) - \Psi^2  (x) \wedge b \}\to \{G (x) - \Psi^2  (x)\}$ as $b\to\infty$ for every $x\in{\cal X}_n$. Using the Fatou's lemma, applicable due to~(\ref{eq:G-Psi-bound3}), we obtain
\begin{align}
\expect_{\pi_n}  \left[ G  (\Xi^n_0)  - \Psi^2  (\Xi^n_0)  \right] &= \expect_{\pi_n}
\left[ \lim_{b \to \infty} \left\{G ^{b} (\Xi^n_0)  -
\Psi^2  (\Xi^n_0) \wedge b \right\}\right] \nonumber\\
&\geq \limsup_{b \to \infty} \expect_{\pi_n}  \left[ G ^{b} (\Xi^n_0)  -
\Psi^2  (\Xi^n_0) \wedge b \right]= 0, \label{eq:EG-Psi}
\end{align}
where the last equality follows from the stationary nature of the distribution $\pi_n$.

Finally, (\ref{eq:G-Psi-bound1}) and~(\ref{eq:EG-Psi}) result in
\[
-\delta  \expect_{\pi_n} \Psi (\Xi^n_0)  + \psi +  \expect_{\pi_n} H  (\Xi^n_0) \geq 0,
\]
and the conclusion of the theorem follows since this inequality holds for every $n>n_0$.
\end{proof}

\end{document}